\documentclass{amsart}
\usepackage{amssymb,amsmath,stmaryrd,mathrsfs}
\def\definetac{\newif\iftac}    
\ifx\tactrue\undefined
  \definetac
  \ifx\state\undefined\tacfalse\else\tactrue\fi
\fi
\iftac\else\usepackage{amsthm}\fi
\usepackage[all,2cell]{xy}
\UseAllTwocells
\usepackage{enumitem}
\usepackage{color}
\definecolor{darkgreen}{rgb}{0,0.45,0} 
\usepackage[pagebackref,colorlinks,citecolor=darkgreen,linkcolor=darkgreen]{hyperref}
\usepackage{mathtools}          
\usepackage{graphics} 
\usepackage{braket}             

\usepackage{url}                
\usepackage{xspace}             

\makeatletter
\let\ea\expandafter

\def\mdef#1#2{\ea\ea\ea\gdef\ea\ea\noexpand#1\ea{\ea\ensuremath\ea{#2}\xspace}}
\def\alwaysmath#1{\ea\ea\ea\global\ea\ea\ea\let\ea\ea\csname your@#1\endcsname\csname #1\endcsname
  \ea\def\csname #1\endcsname{\ensuremath{\csname your@#1\endcsname}\xspace}}

\DeclareRobustCommand\widecheck[1]{{\mathpalette\@widecheck{#1}}}
\def\@widecheck#1#2{%
    \setbox\z@\hbox{\m@th$#1#2$}%
    \setbox\tw@\hbox{\m@th$#1%
       \widehat{%
          \vrule\@width\z@\@height\ht\z@
          \vrule\@height\z@\@width\wd\z@}$}%
    \dp\tw@-\ht\z@
    \@tempdima\ht\z@ \advance\@tempdima2\ht\tw@ \divide\@tempdima\thr@@
    \setbox\tw@\hbox{%
       \raise\@tempdima\hbox{\scalebox{1}[-1]{\lower\@tempdima\box
\tw@}}}%
    {\ooalign{\box\tw@ \cr \box\z@}}}


\newcount\foreachcount

\def\foreachletter#1#2#3{\foreachcount=#1
  \ea\loop\ea\ea\ea#3\@alph\foreachcount
  \advance\foreachcount by 1
  \ifnum\foreachcount<#2\repeat}

\def\foreachLetter#1#2#3{\foreachcount=#1
  \ea\loop\ea\ea\ea#3\@Alph\foreachcount
  \advance\foreachcount by 1
  \ifnum\foreachcount<#2\repeat}

\def\definescr#1{\ea\gdef\csname s#1\endcsname{\ensuremath{\mathscr{#1}}\xspace}}
\foreachLetter{1}{27}{\definescr}
\def\definecal#1{\ea\gdef\csname c#1\endcsname{\ensuremath{\mathcal{#1}}\xspace}}
\foreachLetter{1}{27}{\definecal}
\def\definebold#1{\ea\gdef\csname b#1\endcsname{\ensuremath{\mathbf{#1}}\xspace}}
\foreachLetter{1}{27}{\definebold}
\def\definebb#1{\ea\gdef\csname l#1\endcsname{\ensuremath{\mathbb{#1}}\xspace}}
\foreachLetter{1}{27}{\definebb}
\def\definefrak#1{\ea\gdef\csname f#1\endcsname{\ensuremath{\mathfrak{#1}}\xspace}}
\foreachletter{1}{9}{\definefrak} 
\foreachletter{10}{27}{\definefrak}
\def\definebar#1{\ea\gdef\csname #1bar\endcsname{\ensuremath{\overline{#1}}\xspace}}
\foreachLetter{1}{27}{\definebar}
\foreachletter{1}{8}{\definebar} 
\foreachletter{9}{15}{\definebar} 
\foreachletter{16}{27}{\definebar}
\def\definetil#1{\ea\gdef\csname #1til\endcsname{\ensuremath{\widetilde{#1}}\xspace}}
\foreachLetter{1}{27}{\definetil}
\foreachletter{1}{27}{\definetil}
\def\definehat#1{\ea\gdef\csname #1hat\endcsname{\ensuremath{\widehat{#1}}\xspace}}
\foreachLetter{1}{27}{\definehat}
\foreachletter{1}{27}{\definehat}
\def\definechk#1{\ea\gdef\csname #1chk\endcsname{\ensuremath{\widecheck{#1}}\xspace}}
\foreachLetter{1}{27}{\definechk}
\foreachletter{1}{27}{\definechk}
\def\defineul#1{\ea\gdef\csname u#1\endcsname{\ensuremath{\underline{#1}}\xspace}}
\foreachLetter{1}{27}{\defineul}
\foreachletter{1}{27}{\defineul}

\def\autofmt@n#1\autofmt@end{\mathrm{#1}}
\def\autofmt@b#1\autofmt@end{\mathbf{#1}}
\def\autofmt@l#1#2\autofmt@end{\mathbb{#1}\mathsf{#2}}
\def\autofmt@c#1#2\autofmt@end{\mathcal{#1}\mathit{#2}}
\def\autofmt@s#1#2\autofmt@end{\mathscr{#1}\mathit{#2}}
\def\autofmt@f#1\autofmt@end{\mathfrak{#1}}
\def\autofmt@u#1\autofmt@end{\underline{\smash{\mathsf{#1}}}}
\def\autofmt@U#1\autofmt@end{\underline{\underline{\smash{\mathsf{#1}}}}}
\def\autofmt@h#1\autofmt@end{\widehat{#1}}
\def\autofmt@r#1\autofmt@end{\overline{#1}}
\def\autofmt@t#1\autofmt@end{\widetilde{#1}}
\def\autofmt@k#1\autofmt@end{\check{#1}}

\def\auto@drop#1{}
\def\autodef#1{\ea\ea\ea\@autodef\ea\ea\ea#1\ea\auto@drop\string#1\autodef@end}
\def\@autodef#1#2#3\autodef@end{%
  \ea\def\ea#1\ea{\ea\ensuremath\ea{\csname autofmt@#2\endcsname#3\autofmt@end}\xspace}}
\def\autodefs@end{blarg!}
\def\autodefs#1{\@autodefs#1\autodefs@end}
\def\@autodefs#1{\ifx#1\autodefs@end%
  \def\autodefs@next{}%
  \else%
  \def\autodefs@next{\autodef#1\@autodefs}%
  \fi\autodefs@next}


\DeclareSymbolFont{bbold}{U}{bbold}{m}{n}
\DeclareSymbolFontAlphabet{\mathbbb}{bbold}




\mdef\delbar{\overline{\partial}}
\let\sm\wedge

\mdef\hf{\textstyle\frac12 }
\mdef\thrd{\textstyle\frac13 }
\mdef\qtr{\textstyle\frac14 }

\SelectTips{cm}{}
\newdir{ >}{{}*!/-10pt/@{>}}    

\mdef\Id{\mathrm{Id}}
\mdef\id{\mathrm{id}}
\alwaysmath{ell}
\alwaysmath{infty}
\alwaysmath{odot}
\def\frc#1/#2.{\frac{#1}{#2}}   
\mdef\ten{\mathrel{\otimes}}

\mdef\sqten{\mathrel{\boxtimes}}

\DeclareRobustCommand\widecheck[1]{{\mathpalette\@widecheck{#1}}}
\def\@widecheck#1#2{%
    \setbox\z@\hbox{\m@th$#1#2$}%
    \setbox\tw@\hbox{\m@th$#1%
       \widehat{%
          \vrule\@width\z@\@height\ht\z@
          \vrule\@height\z@\@width\wd\z@}$}%
    \dp\tw@-\ht\z@
    \@tempdima\ht\z@ \advance\@tempdima2\ht\tw@ \divide\@tempdima\thr@@
    \setbox\tw@\hbox{%
       \raise\@tempdima\hbox{\scalebox{1}[-1]{\lower\@tempdima\box
\tw@}}}%
    {\ooalign{\box\tw@ \cr \box\z@}}}


\DeclareMathOperator\Hom{Hom}

\newcommand{\too}[1][]{\ensuremath{\overset{#1}{\longrightarrow}}}

\let\into\hookrightarrow

\mdef\we{\overset{\sim}{\longrightarrow}}
\mdef\leftwe{\overset{\sim}{\longleftarrow}}




\let\xto\xrightarrow

\def\rightarrowtailfill@{\arrowfill@{\Yright\joinrel\relbar}\relbar\rightarrow}
\newcommand\xrightarrowtail[2][]{\ext@arrow 0055{\rightarrowtailfill@}{#1}{#2}}

\def\twoheadrightarrowfill@{\arrowfill@{\relbar\joinrel\relbar}\relbar\twoheadrightarrow}
\newcommand\xtwoheadrightarrow[2][]{\ext@arrow 0055{\twoheadrightarrowfill@}{#1}{#2}}


\def\slashedarrowfill@#1#2#3#4#5{%
  $\m@th\thickmuskip0mu\medmuskip\thickmuskip\thinmuskip\thickmuskip
   \relax#5#1\mkern-7mu%
   \cleaders\hbox{$#5\mkern-2mu#2\mkern-2mu$}\hfill
   \mathclap{#3}\mathclap{#2}%
   \cleaders\hbox{$#5\mkern-2mu#2\mkern-2mu$}\hfill
   \mkern-7mu#4$%
}
\def\rightslashedarrowfill@{%
  \slashedarrowfill@\relbar\relbar\mapstochar\rightarrow}
\newcommand\xslashedrightarrow[2][]{%
  \ext@arrow 0055{\rightslashedarrowfill@}{#1}{#2}}
\mdef\hto{\xslashedrightarrow{}}
\mdef\htoo{\xslashedrightarrow{\quad}}




\def\shvar#1#2{{\ensuremath{%
  \hspace{1mm}\makebox[-1mm]{$#1\langle$}\makebox[0mm]{$#1\langle$}\hspace{1mm}%
  {#2}%
  \makebox[1mm]{$#1\rangle$}\makebox[0mm]{$#1\rangle$}%
}}}
\def\sh{\shvar{}}

\long\def\my@drawfill#1#2;{%
\@skipfalse
\fill[#1,draw=none] #2;
\@skiptrue
\draw[#1,fill=none] #2;
}
\newif\if@skip
\newcommand{\skipit}[1]{\if@skip\else#1\fi}
\newcommand{\drawfill}[1][]{\my@drawfill{#1}}



\newif\ifhyperref
\@ifpackageloaded{hyperref}{\hyperreftrue}{\hyperreffalse}
\iftac
  \let\your@state\state
  \def\state#1{\gdef\currthmtype{#1}\your@state{#1}}
  \let\your@staterm\staterm
  \def\staterm#1{\gdef\currthmtype{#1}\your@staterm{#1}}
  \let\defthm\newtheorem
  \def\currthmtype{}
  \ifhyperref
    \def\autoref#1{\ref*{label@name@#1}~\ref{#1}}
  \else
    \def\autoref#1{\ref{label@name@#1}~\ref{#1}}
  \fi
  \AtBeginDocument{%
    \let\old@label\label%
    \def\label#1{%
      {\let\your@currentlabel\@currentlabel%
        \edef\@currentlabel{\currthmtype}%
        \old@label{label@name@#1}}%
      \old@label{#1}}
  }
\else
  \ifhyperref
    \def\defthm#1#2{%
      \newtheorem{#1}{#2}[section]%
      \expandafter\def\csname #1autorefname\endcsname{#2}%
      \expandafter\let\csname c@#1\endcsname\c@thm}
  \else
    \def\defthm#1#2{\newtheorem{#1}[thm]{#2}}
    \ifx\SK@label\undefined\let\SK@label\label\fi
    \let\old@label\label
    \let\your@thm\@thm
    \def\@thm#1#2#3{\gdef\currthmtype{#3}\your@thm{#1}{#2}{#3}}
    \def\currthmtype{}
    \def\label#1{{\let\your@currentlabel\@currentlabel\def\@currentlabel%
        {\currthmtype~\your@currentlabel}%
        \SK@label{#1@}}\old@label{#1}}
    \def\autoref#1{\ref{#1@}}
  \fi
\fi

\newtheorem{thm}{Theorem}[section]

\defthm{cor}{Corollary}
\defthm{prop}{Proposition}
\defthm{lem}{Lemma}
\defthm{sch}{Scholium}
\defthm{assume}{Assumption}
\defthm{claim}{Claim}
\defthm{conj}{Conjecture}
\defthm{hyp}{Hypothesis}
\defthm{fact}{Fact}
\iftac\theoremstyle{plain}\else\theoremstyle{definition}\fi
\defthm{defn}{Definition}
\defthm{notn}{Notation}
\iftac\theoremstyle{plain}\else\theoremstyle{remark}\fi
\defthm{rmk}{Remark}
\defthm{eg}{Example}
\defthm{egs}{Examples}
\defthm{ex}{Exercise}
\defthm{ceg}{Counterexample}

\def\thmqedhere{\expandafter\csname\csname @currenvir\endcsname @qed\endcsname}


\setitemize[1]{leftmargin=2em}
\setenumerate[1]{leftmargin=*}

\iftac
  \let\c@equation\c@subsection
\else
  \let\c@equation\c@thm
\fi
\numberwithin{equation}{section}

\@ifpackageloaded{mathtools}{\mathtoolsset{showonlyrefs,showmanualtags}}{}

\alwaysmath{alpha}
\alwaysmath{beta}
\alwaysmath{gamma}
\alwaysmath{Gamma}
\alwaysmath{delta}
\alwaysmath{Delta}
\alwaysmath{epsilon}
\mdef\ep{\varepsilon}
\alwaysmath{zeta}
\alwaysmath{eta}
\alwaysmath{theta}
\alwaysmath{Theta}
\alwaysmath{iota}
\alwaysmath{kappa}
\alwaysmath{lambda}
\alwaysmath{Lambda}
\alwaysmath{mu}
\alwaysmath{nu}
\alwaysmath{xi}
\alwaysmath{pi}
\alwaysmath{rho}
\alwaysmath{sigma}
\alwaysmath{Sigma}
\alwaysmath{tau}
\alwaysmath{upsilon}
\alwaysmath{Upsilon}
\alwaysmath{phi}
\alwaysmath{Pi}
\alwaysmath{Phi}
\mdef\ph{\varphi}
\alwaysmath{chi}
\alwaysmath{psi}
\alwaysmath{Psi}
\alwaysmath{omega}
\alwaysmath{Omega}

%

\makeatother


\def\fbar{{\ensuremath{\overline{f}}}}
\renewcommand{\hbar}{{\ensuremath{\overline{h}}}}
\def\fhat{\ensuremath{\widehat{f}}}
\def\ftil{\ensuremath{\widetilde{f}}}

\newcommand{\fib}[2]{{#1}^{-1}(#2)}
\newcommand{\bbZ}{\mathbb{Z}}
\newcommand{\bbQ}{\mathbb{Q}}
\newcommand{\rdual}[1]{{{#1}^{\bigstar}}}
\newcommand{\tr}{\ensuremath{\operatorname{tr}}}
\def\Ehat{\widehat{E}}

\def\fn{g}
\newcommand{\exsm}{\mathbin{\overline{\sm}}}
\CompileMatrices

\title{The multiplicativity of fixed point invariants}
\author{Kate Ponto and Michael Shulman}
\date{\today}

\thanks{The second author was supported by a National Science Foundation postdoctoral fellowship during the writing of this paper.}

\begin{document}
\maketitle

\begin{abstract}
  We prove two general factorization theorems for fixed-point invar\-iants of fibrations: one for the Lefschetz number and one for the Reidemeister trace.
  These theorems imply the familiar multiplicativity results for the Lefschetz and Nielsen numbers of a fibration.
  Moreover, the proofs of these theorems are essentially formal, taking place in the abstract context of bicategorical traces.
  This makes generalizations to other contexts straightforward.
\end{abstract}

\section{Introduction}
\label{sec:introduction}

If $p\colon E\rightarrow B$ is a fiber bundle, in which the fibers and base
are closed smooth manifolds and $B$ is connected, then it is well known that the Euler characteristics
of $E$, $B$ and any fiber $\fib{p}{b}$ are related by 
\begin{equation}
  \chi(E)=\chi(B)\cdot \chi(\fib{p}{b}).\label{eq:euler1}
\end{equation}
If $B$ is not connected, then the fibers over different components may have different Euler characteristics, so this expression must be replaced by
\begin{equation}
  \chi(E)= \sum_{C \in \pi_0(B)} \chi(C)\cdot \chi(F_C)\label{eq:euler2}
\end{equation}
where $F_C$ denotes the fiber over some point in $C$.
More generally, if $f\colon E\to E$ and $\fbar\colon B\to B$ are endomorphisms making the following square commute:
\begin{equation}\label{common}
\xymatrix{E\ar[r]^f\ar[d]_p&E\ar[d]^p\\
B\ar[r]^{\overline{f}}&B
}
\end{equation}
and moreover the fibration $p$ is \emph{orientable} (meaning that $\pi_1(B)$ acts trivially on the homology of the fiber), then a similar result holds for the Lefschetz number
\cite{hmp:pullbacks}:
\begin{equation}
  L(f) = L(\fbar) \cdot L(f_b)\label{eq:lefschetz1}
\end{equation}
(with an evident generalization to the non-connected case).
Of course, $b\in B$ must be a fixed point of \fbar\ in order for $f$ to induce an endomorphism $f_b\colon \fib{p}{b} \to \fib{p}{b}$ of the fiber over $b$.
(If \fbar\ has no fixed points, then neither does $f$, so $L(f) = L(\fbar) = 0$ and~\eqref{eq:lefschetz1} still holds vacuously.)
When $f$ and \fbar\ are identity maps, this recovers~\eqref{eq:euler1}.

Our goal in this paper is to generalize formula~\eqref{eq:lefschetz1} to the case of non-orientable fibrations, and also to the Reidemeister trace, a fixed-point invariant which refines the Lefschetz number (see below).
Note that the right side of~\eqref{eq:lefschetz1} is not even well-defined if the fibration is not orientable, since the Lefschetz number of $f_b$ can vary with $b$, even inside a single connected component of $B$.

\begin{eg}\label{eg:double-cover}
  Consider the double cover $p\colon S^1 \to S^1$, let \fbar\ be reflection in the $x$-axis (with $S^1$ considered as a subset of $\mathbb{R}^2$), and let $f$ be some map lying over it.
  Then \fbar\ has two fixed points $1$ and $-1$, and over one of them $f_b$ is the identity (with Lefschetz number $2$), while over the other it is the transposition (with Lefschetz number $0$).
\end{eg}

Since $B$ is a closed smooth manifold, $\fbar$ is homotopic to a map with a finite number of isolated fixed points.  
The  invariants considered here are invariants of homotopy classes of maps, so we may assume that $\fbar$ has only a finite number of 
fixed points.
Then a simple way to express our formula which generalizes~\eqref{eq:lefschetz1} is
\begin{equation}\label{eq:lefschetz2}
  L(f) = \sum_{\fbar(b)=b} \mathrm{ind}_b(\fbar) \cdot L(f_b).
\end{equation}
That is, we sum over the fixed points of \fbar, adding up the Lefschetz numbers of the fiberwise maps, with the indices of the \fbar-fixed points as coefficients.
For instance, in \autoref{eg:double-cover}, the fixed points of \fbar\ each have index $1$, so we compute $L(f) = 1\cdot 0 + 1\cdot 2 = 2$, which is correct.

Note that if $f$ and \fbar\ are identity maps (or, more precisely, deformations of identity maps that have isolated fixed points), then we can group the fixed points of \fbar\ according to the component of $B$ in which they lie.
Since the Euler characteristic of the fiber is constant over each component, and the sum of the indices of the fixed points of \fbar\ in a given component is the Euler characteristic of that component, in this way we recover~\eqref{eq:euler2}.
More generally, this grouping also applies whenever $p$ is orientable.

For nonorientable $p$ such a coarse grouping is not possible, as \autoref{eg:double-cover} shows, but a somewhat finer grouping is.
We say two fixed points $b_1$ and $b_2$ of \fbar\ are in the same \emph{fixed-point class} if there is a path $\gamma$ from $b_1$ to $b_2$ which is homotopic to $\fbar(\gamma)$ relative to
 endpoints.
(This is a classical definition and can be expressed in many equivalent ways; see, for example, \cite{b:lefschetz,
c:reidemeister, j:nielsen}.)
In this case, one can show that $L(f_{b_1}) = L(f_{b_2})$.

Thus, we can rewrite~\eqref{eq:lefschetz2} as
\begin{equation}\label{eq:lefschetz3}
  L(f) = \sum_{\text{fixed-point}\atop\text{classes }C} \mathrm{ind}_C(\fbar)\cdot L(f_C)
\end{equation}
where
\begin{equation}
  \mathrm{ind}_C(\fbar) = \sum_{b\in C} \mathrm{ind}_b(\fbar)\label{eq:fpc-ind}
\end{equation}
denotes the sum of the indices of all fixed-points in the class $C$, and $L(f_C)$ denotes the common value of $L(f_b)$ for any $b\in C$.

One advantage of~\eqref{eq:lefschetz3} is that unlike~\eqref{eq:lefschetz2}, it involves only homotopy-invariant quantities.
If $f$ and $\fbar$ are deformed through compatible homotopies, then while $L(f)$ remains constant, the fixed points of $\fbar$ can move around and even appear and disappear.
However, the total index~\eqref{eq:fpc-ind} of all fixed-points in a given class remains constant, as does the Lefschetz number $L(f_C)$ over any such class.

Of course, for this last statement to have any meaning, there must be a sense in which the \emph{set of fixed-point classes} remains constant as a map is deformed.
This is not literally true; \fbar\ might have two fixed points in the same class which cancel each other out under a deformation, resulting in that fixed-point class disappearing.
However, there is a set of ``potential'' fixed point classes which is homotopy invariant, defined as follows.

Let $\sh{\pi_1 B_{\fbar}}$ be the set of pairs $(b,\gamma)$ where $b$ is a point of $B$ and $\gamma$ is a path $b\leadsto \fbar(b)$, modulo the equivalence relation which sets $(b,\gamma) \sim (b',\gamma')$ if there is a path $\alpha\colon b\leadsto b'$ such that $\alpha \cdot \gamma'$ is homotopic to $\gamma \cdot \fbar(\alpha)$.
(We write $\cdot$ for path-concatenation.)
Clearly if $\fbar(b) = b$ and $c_b$ is the constant path at $b$, then $(b, c_b)$ is an element of $\sh{\pi_1 B_{\fbar}}$.
Moreover, we have $(b,c_b) \sim (b',c_{b'})$ if and only if $b$ and $b'$ are in the same fixed-point class.
Therefore, the set of fixed-point classes of \fbar\ injects into $\sh{\pi_1 B_{\fbar}}$, and the latter set is homotopy invariant; see~\cite{c:reidemeister,j:nielsen}.

If we make the obvious convention that $\mathrm{ind}_C(\fbar) = 0$ when $C$ is a potential fixed-point class containing no fixed points, then we can write~\eqref{eq:lefschetz3} as
\[
L(f) = \sum_{C \in\; \sh{\pi_1 B_{\fbar}}} \mathrm{ind}_C(\fbar)\cdot L(f_C).
\]

This leads us to another advantage of~\eqref{eq:lefschetz3} over~\eqref{eq:lefschetz2}: formula~\eqref{eq:lefschetz3} does not require us to actually find the fixed points of $\fbar$.
We only need to calculate the numbers $\mathrm{ind}_C(\fbar)$ and $L(f_C)$ for each \emph{potential} fixed-point class.
We will return to the numbers $\mathrm{ind}_C(\fbar)$ below; as for $L(f_C)$, it turns out that we can calculate it without even knowing whether $C$ contains any fixed points.
Specifically, suppose $(b,\gamma\colon b \leadsto \fbar(b))$ is a representative of $C$.
Since $p$ is a fibration, by path-lifting we can obtain a homotopy equivalence $h_\gamma\colon \fib{p}{\fbar(b)} \xto{\sim} \fib{p}{b}$.
Then $L(f_C)$ can be identified with the Lefschetz number of the composite
\[ \fib{p}{b} \xto{f_b} \fib{p}{\fbar(b)} \xto{h_\gamma} \fib{p}{b}. \]
Note that we use $f_b$ to denote the restriction of $f$ to $\fib{p}{b}$, whether or not $b$ is a fixed point of $\fbar$.

A final advantage of~\eqref{eq:lefschetz3} over~\eqref{eq:lefschetz2} is that it is more direct to prove.
To explain why, consider first the case of a trivial bundle $p\colon B\times F \to B$, where $f = \fbar \times f_b$ is also a product.
In this case, formula~\eqref{eq:lefschetz1} follows from abstract nonsense.
For Lefschetz numbers can be identified with \emph{traces} in an abstract category-theoretic sense (see \S\ref{sec:n-duality-lefschetz}), and such traces are always multiplicative:
\begin{equation}
  \tr(f\ten g) = \tr(f) \cdot \tr(g).\label{eq:trmult}
\end{equation}
In fact, the product $\tr(f) \cdot \tr(g)$ in~\eqref{eq:trmult} is properly expressed as a \emph{composite} $\tr(f) \circ \tr(g)$ of endomorphisms of a ``unit object''.
In our topological case, this ``unit object'' is a large-dimensional sphere, whose endomorphisms can be identified (up to homotopy) with integers (their degrees), and composition corresponds to multiplication.

Now formula~\eqref{eq:lefschetz3} can also be interpreted as a composite, in the following way.
Let $\bbZ\sh{\pi_1 B_{\fbar}}$ denote the free abelian group on $\sh{\pi_1 B_{\fbar}}$.
\begin{defn}
The \textbf{refined fiberwise Lefschetz number} of $f$
is the  homomorphism
\[ \widehat{L_B}(f) \colon \bbZ\sh{\pi_1 B_{\fbar}} \to \bbZ \]
which sends each basis element $C\in \sh{\pi_1 B_{\fbar}}$ to $L(f_C)$.
\end{defn}
On the other hand, we have an element
\[ R(\fbar) = \sum_{C\in \;\sh{\pi_1 B_{\fbar}}} \mathrm{ind}_C(\fbar) \cdot C
\qquad \in \bbZ\,\sh{\pi_1 B_{\fbar}}
\]
which can of course be identified with the map $\bbZ \to \bbZ\sh{\pi_1 B_{\fbar}}$ sending $1$ to $R(\fbar)$.
Formula~\eqref{eq:lefschetz3} then becomes the following.
\begin{thm}\label{thm:lefschetz-intro}
The composite
\begin{equation}
  \bbZ \xto{R(\fbar)} \bbZ\sh{\pi_1 B_{\fbar}} \xto{\widehat{L_B}(f)} \bbZ\label{eq:lefschetz4}
\end{equation}
is multiplication by $L(f)$.
\end{thm}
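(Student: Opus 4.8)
The plan is to deduce \autoref{thm:lefschetz-intro} formally from the behaviour of bicategorical traces under composition of dualizable $1$-cells, the topological content being concentrated in a single factorization of the duality data of $E$ through the base $B$. I work in the bicategory $\cEx$ of parametrized spectra: objects are spaces, a $1$-cell $A \hto C$ is a spectrum over $A \times C$, composition contracts the middle factor, and the shadow of an endo-$1$-cell over $A$ records (the homology of) the appropriate twisted free loop space of $A$. From the established theory of bicategorical traces and their relation to fixed-point theory I take the following: Lefschetz numbers and Reidemeister traces are honest bicategorical traces; $\cEx(\ast,\ast)$ in degree $0$ is $\bbZ$ with composition equal to multiplication; and for an endomap $\fbar$ of the closed manifold $B$ the degree-$0$ part of the relevant shadow hom-object is $\bbZ\sh{\pi_1 B_{\fbar}}$, in which the shadow-trace of the identity is precisely $R(\fbar)$.

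The geometric input is a factorization of $E$'s duality data through $B$. Since $E$ is a closed manifold it is $n$-dualizable, and Atiyah duality together with the fibration $p$ exhibits $\Sigma^\infty_+ E$, regarded over $B$ via $p$, as a right dualizable $1$-cell $\ast \hto B$; similarly $\Sigma^\infty_+ B$ is a right dualizable $1$-cell $B \hto \ast$; and the composite of these two in $\cEx$ is $\Sigma^\infty_+ E$ as a $1$-cell $\ast \hto \ast$. The commuting square \eqref{common} equips the first $1$-cell with an endomorphism $2$-cell induced by $f$ and twisted along $\fbar$, and the second with the endomorphism induced by $\fbar$, compatibly, so that their composite is the endomorphism of $\Sigma^\infty_+ E$ induced by $f$.

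I would then invoke the composite-trace theorem: for a composable pair of right dualizable $1$-cells carrying compatible (possibly twisted) endomorphisms, the trace of the composite $1$-cell with the composite endomorphism equals the composite --- taken in the hom-categories of the two unit objects --- of the two individual traces. Applied here, the trace of $(E,f)$ is $L(f)$, the trace of the $B$-piece is $R(\fbar)$ viewed as a map $\bbZ \to \bbZ\sh{\pi_1 B_{\fbar}}$, and the trace of the remaining fiberwise piece is a map $\bbZ\sh{\pi_1 B_{\fbar}} \to \bbZ$; thus the theorem yields precisely the factorization \eqref{eq:lefschetz4} of multiplication by $L(f)$, once the fiberwise piece is identified with $\widehat{L_B}(f)$. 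For that last identification I would unwind the fiberwise trace on a basis element $C = [(b,\gamma)]$: using that path-transport along $\gamma$ realizes the bicategorical composition on the fiber, it collapses to the Lefschetz number of $\fib{p}{b} \xto{f_b} \fib{p}{\fbar(b)} \xto{h_\gamma} \fib{p}{b}$, which is $L(f_C)$ by definition.

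The main obstacle, I expect, is the second paragraph: producing the factorization of $E$'s duality data through $B$ with sufficient naturality, and checking that the composite-trace theorem is available in the twisted generality needed --- the endomorphisms are not endomorphisms of fixed $1$-cells but are twisted by $f$ and $\fbar$, so one wants either a form of the theorem that allows such twists or duality data set up so as to absorb them. A smaller but genuine point is checking that the fiberwise trace on $C$ is independent of the representative $(b,\gamma)$ and reproduces, transport and all, the concrete recipe for $L(f_C)$. Granting those, the remainder is exactly the trivial-bundle computation sketched in the introduction, carried out at the level of bicategorical traces.
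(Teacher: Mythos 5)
Your proposal is correct and is essentially the paper's own proof: factor $E_+$ as the composite of the Costenoble--Waner dualizable $B$-piece $\widecheck{S_B}$ with the fiberwise-dualizable piece $\widehat{E_{+B}}$, use the twisted composite-trace theorem (\autoref{thm:compose-traces2}, together with \autoref{thm:hhcomp}) to factor $\tr(f)$ as the trace of the $B$-piece (which is $R(\fbar)$, by \autoref{thm:reidemeister_id}) followed by the trace of the fiberwise piece, and identify the latter with $\widehat{L_B}(f)$ by evaluating on a class $[(b,\gamma)]$ via path transport, which is exactly \autoref{thm:fibidentify}. The only slips are attributional and harmless: the fiberwise piece is right dualizable because each fiber is classically $n$-dualizable (not because $E$ is a closed manifold), and the $B$-piece requires Costenoble--Waner dualizability of $S_B$ (\autoref{thm:mfd-cwdual}), which is stronger than Atiyah duality of $B$.
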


We will derive this by a generalization of~\eqref{eq:trmult}, using the notion of \emph{bicategorical trace} introduced in~\cite{kate:traces,PS2,PS3}.
Both $\widehat{L_B}(f)$ and $R(\fbar)$ can be identified with traces in this sense, and the analogue of~\eqref{eq:trmult} then directly implies~\eqref{eq:lefschetz3}.

Now we may turn to the question of calculating the numbers $\mathrm{ind}_C(\fbar)$, or equivalently, calculating the element $R(\fbar) \in \bbZ\sh{\pi_1 B_{\fbar}}$.
In fact, $R(\fbar)$ is a well-known invariant which refines the Lefschetz number $L(\fbar)$; it is called the \textbf{Reidemeister trace} of $\fbar$, \cite{b:lefschetz, j:nielsen, husseini:generalized}.

\begin{rmk}
  Of course, $L(\fbar)$ is the sum of all the coefficients in $R(\fbar)$, or equivalently its image under the augmentation $\bbZ\sh{\pi_1 B_{\fbar}} \to \bbZ$ that sends each basis element to $1$.
  This augmentation can be identified with the refined fiberwise Lefschetz number of the map of fibrations
  \[\vcenter{\xymatrix@-.5pc{
      B\ar[r]^{\fbar}\ar[d]_{\id_B} &
      B\ar[d]^{\id_B}\\
      B\ar[r]_{\fbar} &
      B. }}\]
  Thus, the classical relationship between $L(\fbar)$ and $R(\fbar)$ is a special case of \autoref{thm:lefschetz-intro}.
\end{rmk}

Classically, the interest of the Reidemeister trace is that, unlike the Lefschetz number, it supports a converse to the Lefschetz fixed-point theorem (under certain additional hypotheses).
\autoref{thm:lefschetz-intro} shows that the Reidemeister trace also arises naturally when we consider multiplicativity, even if we only care about Lefschetz numbers to begin with.

However, this means that in order to use \autoref{thm:lefschetz-intro} to calculate Lefschetz numbers, we must have ways to calculate the Reidemeister trace.
In particular, it would be nice to have a formula similar to~\eqref{eq:lefschetz3} for Reidemeister traces.
Conveniently, we can derive such a thing from the same abstract context that gives rise to~\eqref{eq:lefschetz3}.
The formula is what one would expect:
\begin{equation}\label{eq:reidemeister1}
  R(f) = \sum_{C \in \;\sh{\pi_1 B_{\fbar}}} \mathrm{ind}_C(\fbar)\cdot i_C(R(f_C)).
\end{equation}
where $i_C$ denotes the map
\[\bbZ\sh{\pi_1 (\fib{p}{b})_{f_b}} \to \bbZ\sh{\pi_1 E_f}
\]
(for some $b\in C$) induced by sending each fixed-point of $f_b$ to itself, regarded as a fixed point of $f$.
This map can also be calculated from a representative $(b,\gamma)$ of $C$ using a homotopy lifting, as we did for $L(f_C)$ above.

We can also express~\eqref{eq:reidemeister1} as a composite, as follows.
\begin{defn}
The \textbf{refined fiberwise Reidemeister trace} of $f$, $\widehat{R_B}(f)$, is the 
homomorphism 
\begin{equation}
  \bbZ\sh{\pi_1 B_{\fbar}} \xto{\widehat{R_B}(f)} \bbZ\sh{\pi_1 E_f}.\label{eq:reidemeister2}
\end{equation}
which sends each fixed-point class $C$ to $i_C(R(f_C))$.
\end{defn}

\begin{thm}\label{thm:reidemeister-intro}
The composite
\begin{equation}
  \bbZ \xto{R(\fbar)} \bbZ\sh{\pi_1 B_{\fbar}} \xto{\widehat{R_B}(f)} \bbZ\sh{\pi_1 E_f}.\label{eq:reidemeister3}
\end{equation}
 is $R(f)\colon \bbZ \to \bbZ\sh{\pi_1 E_f}$.
\end{thm}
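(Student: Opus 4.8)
The plan is to realize every quantity appearing in \eqref{eq:reidemeister3} --- $R(\fbar)$, $\widehat{R_B}(f)$, the fiberwise traces $R(f_C)$, and $R(f)$ itself --- as \emph{bicategorical traces} in the bicategory of parametrized spectra used in the earlier sections, and then to deduce the theorem from the general multiplicativity of such traces, which is the bicategorical analogue of the identity \eqref{eq:trmult}. Recall the shape of that framework: objects are spaces, $\cEx(A,B)$ consists of spectra parametrized over $A\times B$, horizontal composition $\odot$ is external smash followed by fiberwise smash along the shared variable, and the shadow of an $A$-$A$-bimodule is a cyclic-bar / topological Hochschild construction whose relevant homotopy group is of the form $\bbZ\sh{\pi_1 A}$. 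In this setting the Reidemeister trace $R(\ph)$ of an endomorphism $\ph$ of a finitely dominated space $X$ is exactly the bicategorical trace of the $\ph$-twisted $2$-cell on the dualizable $1$-cell $\Sigma^\infty_+ X\colon\ast\to X$; it is this identification, in its relative form, that makes $R(\fbar)$, $\widehat{R_B}(f)$, and $R(f)$ available as traces at all.

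The commuting square \eqref{common} together with the fibration $p$ supplies a factorization of this data for $E$. Regarding $E$ over $B\times E$ via $(p,\id_E)$ gives a ``fiberwise'' $1$-cell $Q\colon B\to E$, which is right dualizable precisely because $p$ is a fibration with finitely dominated fibers (fiberwise Costenoble--Waner duality). One checks that $\Sigma^\infty_+ E\simeq Q\odot\Sigma^\infty_+ B$ as $1$-cells $\ast\to E$, and that \eqref{common}, which says $f$ lies over $\fbar$, exhibits the $f$-twisted $2$-cell on $\Sigma^\infty_+ E$ as the horizontal composite of an $f$-twisted $2$-cell $\phi_Q$ on $Q$ with the $\fbar$-twisted $2$-cell on $\Sigma^\infty_+ B$.

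Applying the multiplicativity (composition) property of bicategorical traces to this factorization now yields \eqref{eq:reidemeister3} directly: the trace of the $\Sigma^\infty_+ B$-factor with its $\fbar$-twist is $R(\fbar)\colon\bbZ\to\bbZ\sh{\pi_1 B_{\fbar}}$; the trace of $\phi_Q$ is a map of shadows $\bbZ\sh{\pi_1 B_{\fbar}}\to\bbZ\sh{\pi_1 E_f}$, which I must identify with $\widehat{R_B}(f)$; and the composite trace is $R(f)$ by the identification of the Reidemeister trace with the bicategorical trace. To identify the trace of $\phi_Q$ with $\widehat{R_B}(f)$ I would evaluate it on one basis element $C\in\sh{\pi_1 B_{\fbar}}$ at a time: choosing a representative $(b,\gamma\colon b\leadsto\fbar(b))$, path-lifting along $p$ restricts $Q$ and $\phi_Q$ over $C$ to the fiber and presents the $C$-component of the trace as the bicategorical trace of the composite $\fib{p}{b}\xto{f_b}\fib{p}{\fbar(b)}\xto{h_\gamma}\fib{p}{b}$, pushed forward along $i_C$ --- that is, $i_C(R(f_C))$, which is exactly the defining value of $\widehat{R_B}(f)$ on $C$.

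The bulk of the work --- and the main obstacle --- is not the multiplicativity itself, which is formal once everything is in place, but the bookkeeping that puts it in place: verifying that $Q$ is dualizable in $\cEx$; checking that the factorization of the $f$-action coming from \eqref{common} is compatible with the automorphisms and base changes that define the twisted shadows $\bbZ\sh{\pi_1 B_{\fbar}}$ and $\bbZ\sh{\pi_1 E_f}$; and carrying out the fiberwise identification of the trace of $\phi_Q$ with $\widehat{R_B}(f)$, including the homotopy-lifting argument that produces $h_\gamma$ and $i_C$ from a representative of $C$. Finally, \autoref{thm:lefschetz-intro} is recovered from \eqref{eq:reidemeister3} by post-composing with the augmentation $\bbZ\sh{\pi_1 E_f}\to\bbZ$: it carries $R(f)$ to $L(f)$ and, since $L(f_C)$ is the total coefficient sum of $R(f_C)$ and $\mathrm{ind}_C(\fbar)$ is given by \eqref{eq:fpc-ind}, carries $\widehat{R_B}(f)$ to $\widehat{L_B}(f)$.
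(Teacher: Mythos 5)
Your proposal is correct and is essentially the paper's own argument, merely phrased in the abstract bicategorical language that the paper deliberately unpacks: your $Q$ is the base change object $B_p$ (dualizable by the Costenoble--Waner dualizability of the fibers), your factorization of the $f$-twisted $2$-cell is the paper's Lemma~\ref{thm:hhcomp2} via $\widecheck{S_E}\simeq\widecheck{S_B}\odot B_p$, your fiberwise identification of $\tr(\phi_Q)$ with $\widehat{R_B}(f)$ is Proposition~\ref{thm:fibidentify2}, and the final appeal to multiplicativity of twisted traces is Theorem~\ref{thm:compose-traces2}. The steps you flag as ``bookkeeping'' are exactly the lemmas the paper supplies, so there is no gap, only a difference in level of explicitness.
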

We will show that these invariants can also be expressed as bicategorical traces, and so~\eqref{eq:reidemeister1} also follows from the bicategorical version of~\eqref{eq:trmult}.

In the literature, multiplicativity formulas such as~\eqref{eq:reidemeister1} have more often been expressed in terms of the \emph{Nielsen number} $N(f)$, which is the number of nonzero coefficients in the Reidemeister trace $R(f)$.
All such formulas can be derived from~\eqref{eq:reidemeister1}.
This is most easily seen using the general formulation in~\cite[Theorem 3.3]{hkw:additivity}.
(Other multiplicativity results, \cite{b:fiber,bf:fiber,hmp:pullbacks, heath, no:product, p:fiber, y:fiber}, are simplifications that follow from additional conditions
imposed on the fibration.)

To explain the relationship of~\eqref{eq:reidemeister1} to~\cite[Theorem 3.3]{hkw:additivity}, recall that for each fixed point $b$ of $\fbar$ we have a map
\[ i_b\colon \mathbb{Z}\sh{\pi_1(\fib{p}{b})_{f_b}}\rightarrow \mathbb{Z}\sh{\pi_1E_f}\]
induced by the inclusion $\fib{p}{b} \into E$.
Note that distinct fixed-point classes of $f_b$ can coalesce as fixed-point classes of $f$, so the number of nonzero coefficients of $R(f_b) \in \mathbb{Z}\sh{\pi_1(\fib{p}{b})_{f_b}}$ (that is, the Nielsen number of $f_b$) is not necessarily the same as the number of nonzero coefficients of $i_b(R(f_b))$.

However, it is true that for two fixed points $b$ and $b'$ in different fixed point classes of $\fbar$, the images $i_b(R(f_b))$ and $i_{b'}(R(f_{b'}))$
will be expressions in disjoint fixed point classes of $f$.
This is because any path exhibiting two fixed points of $f$ as being in the same fixed-point class maps down to a similar such path for their images in $B$.
This means that in~\eqref{eq:reidemeister1}:
\[R(f) = \sum_{C \in \;\sh{\pi_1 B_{\fbar}}} \mathrm{ind}_C(\fbar)\cdot i_C(R(f_C)) \]
the number of nonzero coefficients in $R(f)$ (that is, the Nielsen number of $f$) is equal to the sum, over all fixed-point classes $C$ of \fbar\ with nonzero index, of the number of nonzero coefficients in $i_b(R(f_b))$ (for any $b\in C$).

Thus, if we write $c(b)$ for the number of nonzero coefficients in $i_b(R(f_b))$, we have
\[N(f) = \sum_{i=1}^k c(b_i),\]
where $\{b_1,b_2,\ldots ,b_k\}$ is a choice of one fixed point from each fixed point class of $\fbar$ with nonzero index.
This is precisely~\cite[Theorem 3.3]{hkw:additivity}.

\begin{rmk}\label{thm:defns-of-L}
  The fixed point invariants considered here admit several equivalent definitions.
  In particular, the \emph{Lefschetz number} is commonly defined as
  \[L(f)=\sum(-1)^i\tr(H_i(f;\bbQ)).\]
  As the foregoing suggests, our methods apply directly rather to the \emph{fixed point index}, which is defined topologically in terms of fixed points.
  But since these invariants are well-known to be equal, we will not bother to distinguish between them terminologically.
  Thus, we will use the term ``Lefschetz number'' everywhere, although (as will be clear from context) the definition we actually use will be topological.
  Similarly, we will use ``Reidemeister trace'' to refer to any of the equivalent descriptions of that invariant.
\end{rmk}

Some remarks are also in order about the role of category-theoretic abstraction in this paper.
With very few exceptions, all of our results have completely formal proofs when placed in the correct abstract context.
That context is the theory of duality and trace in bicategories and indexed monoidal categories, which is described in~\cite{kate:traces,PS2,PS3}.
The presence of that abstract context is especially valuable for purposes of generalizations (such as to parametrized or equivariant theories).
In fact, we were first led to our multiplicativity formulas by way of the abstract situation.

However, in order to improve readability and minimize the necessary category-theoretic background, in this paper we have provided explicit proofs for everything instead.
The reader who is willing to learn about bicategorical traces from~\cite{kate:traces,PS2,PS3} will then find it a useful exercise to supply abstract proofs of all the theorems in this paper.
The only ingredients for our results whose proofs are not formal are Theorems~\ref{thm:mfd-cwdual} and~\ref{thm:reidemeister_id}---neither of which we prove in this paper anyway!
The former is proven in~\cite{maysig:pht} and the latter in~\cite{kate:higher}.
Both involve concrete topological arguments, supplying the ``glue'' that connects the abstract context to familiar numerical invariants.

In other words, the theorems we prove in this paper, when fully generalized, say that any invariant defined in a certain abstract way satisfies certain formal multiplicativity properties.
Therefore, once a given numerical invariant, such as the Lefschetz number or the Reidemeister trace, can be shown to have an abstract definition of this form (which is where the concrete topological work comes in), the general multiplicativity properties automatically apply to that invariant.

In \S\S\ref{sec:n-duality-lefschetz}--\ref{sec:parametrized-trace} of this paper we recall the definitions of duality and trace, specialized to the relevant examples.
We start in \S\ref{sec:n-duality-lefschetz} with the classical notions of duality and trace for topological spaces, and their relationship to Lefschetz numbers.
Then in \S\ref{sec:fib}, we recall some results about fibrations, and using these in \S\ref{sec:cw-duality} we define duality for fibrations and give examples of dualizable objects.
These definitions are due to~\cite{cw:eohc,maysig:pht}, but we describe them more simply and concretely, omitting the abstract context.
We then define traces for fibrations in \S\ref{sec:parametrized-trace} and give their connection to the Reidemeister trace.
Finally, in the last two sections we apply these ideas to prove the theorems claimed above: \autoref{thm:lefschetz-intro} in \S\ref{sec:lefschetz} and \autoref{thm:reidemeister-intro} in \S\ref{sec:reidemeister}.

\section{$n$-duality and Lefschetz numbers}
\label{sec:n-duality-lefschetz}

As background and warm-up, in this section we review the characterization of Lefschetz numbers as categorical traces, and explain how it implies the multiplicativity theorem for trivial bundles.

Let $M$ be a based topological space.
We say that $M$ is \textbf{$n$-dualizable} if there exists a based space $\rdual{M}$ and maps
\begin{align*}
  \eta &\colon S^n \too M \sm \rdual{M}\\
  \ep &\colon \rdual{M} \sm M \too S^n
\end{align*}
such that the composites
\begin{gather}
  S^n \sm M \xto{\eta\sm \id} M\sm \rdual{M}\sm M \xto{\id \sm \ep}  M \sm S^n \label{eq:tri1} \\
  \rdual{M}\sm S^n \xto{\id\sm \eta} \rdual{M}\sm M\sm\rdual{M} \xto{ \ep\sm \id} S^n \sm \rdual{M}\label{eq:tri2}
\end{gather}
become homotopic to transposition maps after smashing with some $S^m$ (in this case one says they are \emph{stably homotopic} to transpositions).
We refer to $\eta$ as the \textbf{coevaluation} and $\ep$ as the \textbf{evaluation} for the duality.

An unbased space $M$ is \textbf{$n$-dualizable} if $M$ with a disjoint basepoint, written $M_+$, is $n$-dualizable.
It is well-known (see~\cite{atiyah:thom, lms:equivariant}) that every closed smooth manifold $M$ is $n$-dualizable.
We may take $n$ to be the dimension of a Euclidean space in which $M$ embeds, and $\rdual{M}$ the Thom space of the normal bundle of the embedding.

If $M$ is an $n$-dualizable based space and $f\colon M\to M$ is a based endomorphism, we define its \textbf{trace} $\tr(f)$ to be the composite map
\begin{equation}
  S^n \xto{\eta} M \sm \rdual{M} \xto{f \sm \id} M \sm \rdual{M} \xto{\cong} \rdual{M}\sm M \xto{\ep} S^n\label{eq:tr}
\end{equation}
and its \textbf{Lefschetz number} $L(f)$ to be the degree of this trace.
The \textbf{Euler characteristic} of $M$, $\chi(M)$, is the Lefschetz number of the identity map of $M$.
We apply all these notions to unbased spaces and maps by adjoining disjoint basepoints.
These definitions are clearly homotopy invariant, and are known to agree with all other definitions of Lefschetz number and Euler characteristic; see~\cite{dp:duality}.

There are many reasons why this formulation of Lefschetz number and Euler characteristic is useful, but for us the most important is that, as suggested in \S\ref{sec:introduction}, it makes it easy to prove the multiplicativity theorem for trivial bundles.
Specifically, if $M$ and $N$ are both $n$-dualizable based spaces, as above, then it is easy and formal to prove that $M\sm N$ is also $n$-dualizable; its dual is $\rdual{N}\sm \rdual{M}$.
Moreover, if $f\colon M\to M$ and $g\colon N\to N$ are endomorphisms, we can prove by formal manipulation that $\tr(g \sm f) \sim \tr(g) \circ \tr(f)$ as maps $S^n \to S^n$, hence $L(g \sm f) = L(g) \cdot L(f)$.
Of course, if $M$, $N$, $f$, and $g$ are unbased, then $M_+ \sm N_+ \cong (M\times N)_+$ and $f_+ \sm g_+ \cong (f\times g)_+$, so we obtain the multiplicativity theorem for trivial bundles.

\begin{rmk}\label{rmk:matrix}
There is a strong analogy with the trace of a matrix.
Specifically, a finite-dimensional $\Bbbk$-vector space $V$ has a dual space $\rdual{V} = \Hom(V,\Bbbk)$, with a map $\ep\colon \rdual{V} \otimes V \to \Bbbk$ given by evaluation, and a map $\eta\colon\Bbbk \to V\otimes \rdual{V}$ defined to take $1\in\Bbbk$ to $\sum_i v_i \otimes v_i^*$, where $\{v_i\}$ is a basis of $V$ and $\{v_i^*\}$ the dual basis of $\rdual{V}$.
The analogues of~\eqref{eq:tri1} and~\eqref{eq:tri2} are identities, and the analogue of~\eqref{eq:tr} produces the map $\Bbbk \to \Bbbk$ that is multiplication by the trace of $f$ (in the classical sense).

This analogy can be made precise in the following way.
There is a general notion of duality and trace in a symmetric monoidal category; see~\cite{dp:duality, lms:equivariant, PS1}.
In the category of vector spaces, this yields the usual algebraic duality and trace, while in the stable homotopy category, it yields the notions of $n$-duality and trace defined above.
The proof of multiplicativity for trivial bundles also applies in this abstract context.
As mentioned in \S\ref{sec:introduction}, our more general multiplicativity theorems likewise apply in an abstract context of ``bicategorical'' duality and trace, which we will not make explicit; see~\cite{kate:traces,PS2,PS3} for details.
\end{rmk}

\section{Fibrations}
\label{sec:fib}

Our method to deal with nontrivial bundles will involve a refinement of $n$-duality, which is due originally to Costenoble and Waner~\cite{cw:eohc} and was studied further by May and Sigurdsson~\cite{maysig:pht}.
In order to give the definition, we need to recall some homotopy theory of fibrations.

For us, \emph{fibration} will always mean \emph{Hurewicz fibration}.
If $p_1\colon E_1 \to B$ and $p_2\colon E_2 \to B$ are fibrations, a \emph{fiberwise map} is a map $f\colon E_1\to E_2$ such that $p_2 f = p_1$.
Similarly, a \emph{fiberwise homotopy} is a map $H\colon E_1\times [0,1]\to E_2$ such that $p_2(H(e,t)) = p_1(e)$ for all $e\in E$ and $t\in [0,1]$.
We denote fiberwise homotopy equivalences by $\simeq$.

If, on the other hand, we have fibrations $p_1\colon E_1 \to B_1$ and $p_2\colon E_2 \to B_2$ over (possibly) different base spaces, and $\fbar\colon B_1 \to B_2$ is a map, then by a \emph{(fiberwise) map over \fbar} we mean a map $f\colon E_1\to E_2$ such that $p_2 f = \fbar p_1$.
Thus, a ``fiberwise map'' without qualification is always over the identity $\id_B$.

Two basic constructions on fibrations are \emph{pullback} and \emph{pushforward} along a continuous map $\fn\colon A\to B$.
Firstly, given a fibration $p\colon E\to B$, we have a pullback fibration $\fn^*E \to A$.
For any other fibration $E'\to A$, there is a natural bijection between maps $E'\to E$ over $\fn$ and fiberwise maps $E'\to \fn^*E$ (over $\id_A$).
A point of $\fn^*E$ is, strictly speaking, an ordered pair $(a,e)$ such that $\fn(a)=\fn(q(e))$, but we usually abuse notation and denote it simply by $e$; this causes no problem when we are speaking about points in the fiber over a specified point $a\in A$.

Secondly, given a fibration $q\colon E\to A$, the composite $E\xto{q} A \xto{\fn} B$ is not in general a fibration (though it is if $\fn$ is also a fibration), but up to homotopy we can replace it by one.
We denote the result by $\fn_! E \to B$.
Explicitly, a point of $\fn_! E$ over $b\in B$ is a pair $(\gamma,e)$ where $e\in E$, $\gamma$ is a path $[0,1]\to B$, and $\gamma(0) = gq(e)$ and $\gamma(1)=b$.

The pushforward operation $\fn_!$ has the property that for any fibration $E'\to B$, there is a natural bijection, up to fiberwise homotopy, between maps $E\to E'$ over $\fn$ and fiberwise maps $\fn_! E\to E'$.
Therefore, up to homotopy $\fn_!$ is left adjoint to $\fn^*$.

A particularly important construction built out of these operations is the following.
Given fibrations $M \to A\times B$ and $N\to B\times C$, we can form the product fibration $M\times N \to A\times B\times B\times C$, pull back along the diagonal $A\times B\times C \to A\times B\times B\times C$, then compose with the projection $A\times B\times C \to A\times C$ (which is a fibration).
This yields a fibration $M\times_{B} N \to A\times C$ whose total space is the pullback of $M$ and $N$ over $B$.
(We could have defined this more directly, but the description given above generalizes better to the sectioned context, below.)

This operation is associative, and it has two-sided units up to homotopy defined as follows.
For any space $B$, let $P B$ denote the space of paths $\gamma \colon [0,1]\to B$, and define the fibration $P B \to B\times B$ by evaluation at the endpoints, $\gamma \mapsto (\gamma(0),\gamma(1))$.
Then for any fibrations $M\to A\times B$ and $N\to B\times C$ we have
\[M\times_B P B \simeq M \quad\text{and}\quad P B \times_B N \simeq N.\]
Since we will be frequently using these and similar path spaces, we take the opportunity to fix notations for paths.
If $b_1$ and $b_2$ are points in $B$, we write $\beta\colon b_1 \leadsto b_2$ to indicate that $\beta$ is a path $[0,1]\to B$ with $\beta(0)=b_1$ and $\beta(1)=b_2$.
If we also have $\gamma\colon b_2 \leadsto b_3$, then $\beta\cdot\gamma$ denotes the concatenated path $b_1\leadsto b_3$.
Finally, we write $c_b$ for the constant path at $b\in B$.
(We also write $c$ for a generic point of a space $C$, but this should not cause any confusion, since the former usage always has a subscript and the latter never does.)

\begin{rmk}\label{rmk:ordering-convention}
Henceforth, when we write $M\times_{B} N$, it will always be assumed that $M$ is a fibration over $A\times B$ and $N$ is a fibration over $B\times C$, for some spaces $A$ and $C$.
It might happen that $A$ or $C$ is equal to $B$ (as for instance if $M$ or $N$ is $P B$); in such a case $M\times_{B} N$ always denotes the pullback over the middle copies of $B$.

If we are given a fibration $M\to B$, we may regard it either as a fibration $M\to B\times \star$ or as a fibration $M\to \star\times B$.
When necessary for clarity, we denote the former by $\widehat{M}$ and the latter by $\widecheck{M}$.
\end{rmk}

Next, given a map $\fn\colon A\to B$ of base spaces, we write $B_\fn$ for the pullback $(\id\times \fn)^* P B \to B\times A$, and ${}_\fn B$ for the similar pullback $(\fn\times \id)^* P B \to A\times B$.
Thus, the fiber of $B_\fn$ over $(b,a)$ is the space of paths from $b$ to $\fn(a)$, and similarly for ${}_{\fn} B$.
Strictly speaking, a point of $B_\fn$ is a pair $(\beta,a)$ such that $\beta(1)=\fn(a)$, but we usually denote such a pair abusively simply by $\beta$.
Similarly, a point of ${}_\fn B$ is a pair $(a,\beta)$ with $\beta(0)=\fn(a)$, which we usually denote by $\beta$.

The fibrations $B_\fn$ and ${}_\fn B$ are called \emph{base change objects}, and have the following important properties:
\begin{itemize}
\item For any $M\to C\times B$, we have $M\times_B B_\fn \simeq (\id\times \fn)^* M$.
\item For any $M\to B\times C$, we have ${}_\fn B \times_B M \simeq (\fn\times \id)^* M$.
\item For any $M\to C\times A$, we have $M\times_A {}_\fn B \simeq (\id\times \fn)_! M$.
\item For any $M\to A\times C$, we have $B_\fn \times_A M \simeq (\fn\times \id)_! M$.
\end{itemize}

\begin{prop}\label{thm:psfr}
  Base change objects are \emph{pseudofunctorial}.
  This means that given $A \xto{\fn_1} B \xto{\fn_2} C$, we have fiberwise homotopy equivalences
  \begin{align*}
    \left({}_{\fn_1} B\right) \times_B \left({}_{\fn_2} C\right) &\simeq {}_{\fn_2 \fn_1} C\\
    \left(C_{\fn_2}\right) \times_B \left(B_{\fn_1}\right) &\simeq C_{\fn_2 \fn_1}
  \end{align*}
  over $A\times C$ and $C\times A$, respectively.
\end{prop}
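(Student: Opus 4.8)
The plan is to unwind both sides to explicit point-set descriptions and exhibit mutually inverse fiberwise homotopy equivalences. Recall that a point of $B_{\fn_1}$ over $(b,a)$ is a path $\beta\colon b\leadsto \fn_1(a)$, and a point of $C_{\fn_2}$ over $(c,b)$ is a path $\gamma\colon c \leadsto \fn_2(b)$. By the definition of $\times_B$ as pullback over the middle copies of $B$, a point of $\left(C_{\fn_2}\right)\times_B\left(B_{\fn_1}\right)$ over $(c,a)$ is a pair $(\gamma,\beta)$ with $\gamma\colon c\leadsto \fn_2(b)$ and $\beta\colon b\leadsto \fn_1(a)$ for a common $b\in B$. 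On the other hand, a point of $C_{\fn_2\fn_1}$ over $(c,a)$ is a single path $c\leadsto \fn_2\fn_1(a)$. The obvious candidate map sends $(\gamma,\beta)\mapsto \gamma\cdot\fn_2(\beta)$, concatenating $\gamma$ with the image under $\fn_2$ of $\beta$; this is clearly continuous and fiberwise over $C\times A$. The first equivalence will follow by the mirror-image argument with the roles of the two factors and the direction of composition reversed.

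The key steps, in order: first I would write down the comparison map $\Phi\colon \left(C_{\fn_2}\right)\times_B\left(B_{\fn_1}\right)\to C_{\fn_2\fn_1}$ as above, and check it is a well-defined fiberwise map. Second, I would exhibit a fiberwise homotopy inverse. The natural choice reparametrizes a path $\delta\colon c\leadsto \fn_2\fn_1(a)$ as the pair $\bigl(\delta, c_{\fn_1(a)}\bigr)$ living over $b=\fn_1(a)\in B$ — that is, take all of $\delta$ on the left and the constant path on the right. Third, I would check the two composites are fiberwise homotopic to identities: $\Phi$ followed by this section sends $\delta$ to $\delta\cdot\fn_2(c_{\fn_1(a)}) = \delta\cdot c_{\fn_2\fn_1(a)}$, which is homotopic to $\delta$ rel endpoints by the standard reparametrization homotopy; going the other way, $(\gamma,\beta)$ maps to $\bigl(\gamma\cdot\fn_2(\beta),\, c_{\fn_1(a)}\bigr)$, and I must produce a fiberwise homotopy from this back to $(\gamma,\beta)$ that slides the basepoint $b$ along $\beta$ from $\fn_1(a)$ back to $\beta(0)$ while simultaneously transferring $\fn_2(\beta)$ off the left factor. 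Concretely this is the homotopy $s\mapsto \bigl(\gamma\cdot\fn_2(\beta|_{[0,s]}),\, \beta|_{[s,1]}\bigr)$, suitably reparametrized; one checks it is continuous in all variables and lies over the correct point of $B$ at each stage.

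The main obstacle is the bookkeeping in this last homotopy: it must be genuinely fiberwise (constant after projecting to $C\times A$), continuous jointly in the homotopy parameter and the point of the total space, and compatible with the path-reparametrization conventions so that at $s=0$ and $s=1$ it hits the two composites on the nose rather than merely up to a further reparametrization. Since fiberwise homotopy equivalences are exactly the notion denoted $\simeq$ in the paper, and since all the path operations involved (concatenation, restriction, image under a continuous map) are manifestly continuous, there is no conceptual difficulty — but care is needed to phrase the sliding homotopy so that the middle copy of $B$ over which the pullback is formed varies correctly. I would handle the $\left({}_{\fn_1}B\right)\times_B\left({}_{\fn_2}C\right)\simeq {}_{\fn_2\fn_1}C$ case by the entirely symmetric argument, using $(\beta',\gamma')\mapsto \fn_2(\beta')\cdot\gamma'$ with $\beta'\colon \fn_1(a)\leadsto b$ and $\gamma'\colon \fn_2(b)\leadsto c$.
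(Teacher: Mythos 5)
Your proposal is correct and follows essentially the same route as the paper's proof: the same explicit point descriptions, the same comparison map (concatenating one path with the $\fn_2$-image of the other) and the same homotopy inverse (pairing with a constant path), the only difference being that you spell out one equivalence where the paper spells out the other, each declaring the remaining case symmetric. The sliding homotopy you describe is exactly the verification the paper leaves implicit in ``it is easy to verify that these are inverse equivalences,'' so no gap remains.
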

\begin{proof}
  We prove the first; the second is similar.
  A point of $\left({}_{\fn_1} B\right) \times_B \left({}_{\fn_2} C\right)$ over $(a,c)$ is a pair $(\beta,\gamma)$, where $\beta\colon \fn_1(a) \leadsto b$ and $\gamma\colon \fn_2(b)\leadsto c$ for some $b\in B$.
  On the other hand, a point of ${}_{\fn_2 \fn_1} C$ over $(a,c)$ is a path $\delta\colon \fn_2(\fn_1(a)) \leadsto c$.
  From left to right we send $(\beta,\gamma)$ to $\fn_2(\beta)\cdot \gamma$, while from right to left we send $\delta$ to $(c_{\fn_1(a)},\delta)$.
  It is easy to verify that these are inverse equivalences.
\end{proof}

\begin{prop}\label{thm:hopb}
  Suppose that
  \[\vcenter{\xymatrix{
      A \ar[r]^{\fn_1}\ar[d]_{\fn_2} \ar@{}[dr]|{\alpha} &
      B \ar[d]^{\fn_3}\\
      C \ar[r]_{\fn_4} &
      D }} \]
  is a square which commutes up to a specified homotopy $\alpha\colon \fn_4 \fn_2\sim \fn_3 \fn_1$.
  Then we have a canonical map
  \[ \left(C_{\fn_2}\right) \times_A \left( {}_{\fn_1} B\right) \too
  \left({}_{\fn_4} D \right) \times_D\left( D_{\fn_3} \right).
  \]
  If the given square is a homotopy pullback, then this map is an equivalence.
\end{prop}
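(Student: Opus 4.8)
The plan is to define the map by an explicit formula, and then for the second assertion to reduce it to a statement about the fibers over $C\times B$ and invoke the defining property of a homotopy pullback.

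First I would write down the map concretely. Unwinding the definitions of the base change objects, a point of $\left(C_{\fn_2}\right)\times_A\left({}_{\fn_1}B\right)$ lying over $(c,b)\in C\times B$ is a point $a\in A$ together with paths $\beta\colon c\leadsto\fn_2(a)$ and $\gamma\colon\fn_1(a)\leadsto b$, while a point of $\left({}_{\fn_4}D\right)\times_D\left(D_{\fn_3}\right)$ over $(c,b)$ is a point $d\in D$ together with paths $\delta\colon\fn_4(c)\leadsto d$ and $\ep\colon d\leadsto\fn_3(b)$. Writing $\alpha_a\colon\fn_4\fn_2(a)\leadsto\fn_3\fn_1(a)$ for the path obtained by evaluating the homotopy $\alpha$ at $a$, the map sends $(a,\beta,\gamma)$ to the point with $d=\fn_3\fn_1(a)$, $\delta=\fn_4(\beta)\cdot\alpha_a$ and $\ep=\fn_3(\gamma)$. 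This is evidently continuous and fiberwise over $C\times B$, so it is the desired canonical map. (Placing $\alpha_a$ into the second factor instead produces a fiberwise homotopic map, so the choice is harmless.)

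For the second statement, note first that both source and target are Hurewicz fibrations over $C\times B$, being assembled from path-space fibrations by the operations $(-)\times_{(-)}(-)$; hence a fiberwise map between them is a fiberwise homotopy equivalence as soon as it restricts to a weak equivalence on each fiber. So it suffices to analyze the map on the fiber over an arbitrary $(c,b)$. The source fiber is $\{(a,\beta,\gamma)\}$ as above, which (after reversing $\gamma$) is a model for the homotopy fiber over $(c,b)$ of $(\fn_2,\fn_1)\colon A\to C\times B$. The target fiber $\{(d,\delta,\ep)\}$ is homotopy equivalent, via concatenation of $\delta$ with $\ep$, to the space $P_{\fn_4(c)}^{\fn_3(b)}D$ of paths $\fn_4(c)\leadsto\fn_3(b)$ in $D$, and this in turn is canonically the homotopy fiber over $(c,b)$ of the projection to $C\times B$ from the standard homotopy pullback $B\times^h_D C=\{(b',c',\omega)\mid\omega\colon\fn_4(c')\leadsto\fn_3(b')\}$. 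Under these identifications a short computation with concatenations shows that our map on fibers agrees with the map on homotopy fibers induced by the canonical comparison $A\to B\times^h_D C$, $a\mapsto(\fn_1(a),\fn_2(a),\alpha_a)$, which is a map over $C\times B$. When the square is a homotopy pullback this comparison is a weak equivalence, so the induced map on homotopy fibers, and therefore our map, is a weak equivalence on fibers; this proves the claim.

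The only genuine bookkeeping is in that last compatibility check: keeping track of which paths must be reversed to match the standard conventions for homotopy fibers, and verifying that the homotopy equivalence of the target fiber with $P_{\fn_4(c)}^{\fn_3(b)}D$ carries the explicit formula above to the comparison map. Everything else is formal. In the concrete spirit of the rest of the paper one could avoid homotopy fibers entirely: fix a homotopy inverse $\varphi$ of $A\to B\times^h_D C$ together with the two witnessing homotopies, use $\varphi$ applied to $(b,c,\overline{\delta\cdot\ep})$ to build an explicit inverse of our map, and then exhibit the homotopies showing the two composites are fiberwise homotopic to identities --- the same bookkeeping in a different dress.
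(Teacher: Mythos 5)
Your proposal is correct and takes essentially the same approach as the paper: you write down the identical explicit map $(\gamma,\beta)\mapsto(\fn_4(\gamma)\cdot\alpha(a),\fn_3(\beta))$ and reduce the second claim to the canonical comparison from $A$ into a path-space model of the homotopy pullback of $\fn_3$ and $\fn_4$ being an equivalence. The only difference is bookkeeping: the paper verifies the equivalence globally, noting that the source is exactly the fibrant replacement of $(\fn_2,\fn_1)\colon A\to C\times B$ and the map the extension of $a\mapsto(\alpha(a),c_{\fn_3\fn_1(a)})$, whereas you check it fiber by fiber (invoking a Dold-type argument), which reaches the same conclusion by a slightly longer route.
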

\begin{proof}
  A point of $\left(C_{\fn_2}\right) \times_A \left( {}_{\fn_1} B\right)$ over $(c,b)\in C\times B$ is a pair 
  $(\gamma,\beta)$ together with an $a\in A$ such that
 $\gamma\colon c\leadsto \fn_2(a)$ and $\beta\colon \fn_1(a) \leadsto b$.
  On the other hand, a point of $\left({}_{\fn_4} D \right) \times_D\left( D_{\fn_3} \right)$ over $(c,b)$ is a pair $(\delta,\delta')$ where $\delta\colon \fn_4(c)\leadsto d$ and $\delta'\colon d\leadsto \fn_3(b)$ for some $d\in D$.
  We define the desired map by
  \begin{equation}
    (\gamma,\beta) \mapsto (\fn_4(\gamma) \cdot \alpha(a), \fn_3(\beta)).\label{eq:hopb-mapdefn}
  \end{equation}
  For the second statement, note that $\left(C_{\fn_2}\right) \times_A \left( {}_{\fn_1} B\right)$ is just the result of replacing $(\fn_2,\fn_1)\colon A\to C\times B$ by a fibration, and~\eqref{eq:hopb-mapdefn} is just the extension to this replacement of the map
  \begin{align*}
    A &\to \left({}_{\fn_4} D \right) \times_D\left( D_{\fn_3} \right)\\
    a &\mapsto (\alpha(a), c_{\fn_3(\fn_1(a))})
  \end{align*}
  over $C\times B$.
  Moreover, $\left({}_{\fn_4} D \right) \times_D\left( D_{\fn_3} \right)$ is equivalent to \emph{the} homotopy pullback of $g_3$ and $g_4$, so this map is an equivalence just when the given square is \emph{a} homotopy pullback.
\end{proof}

The fibrational version of a based space is a \emph{sectioned} fibration, i.e.\ a fibration $p\colon E\to B$ equipped with a continuous section $s\colon B\to E$ (so that $p s = \id_B$).
We usually assume that the section is a fiberwise closed cofibration, in which case we call $p$ an \textbf{ex-fibration} (see~\cite{maysig:pht} for details).
We say that a fiberwise map, or a map over $\fn$, between ex-fibrations is an \textbf{ex-map} if it also commutes with the specified sections.
Note that if $p\colon E\to B$ is an ex-fibration, then each fiber $\fib{p}{b}$ is a nondegenerately based space.

If $E\to B$ is any fibration, then $E\sqcup B \to B$ is an ex-fibration, called the result of \emph{adjoining a disjoint section} to $E$;
we denote it by $E_{+B}$.
Additionally, for any ex-fibration $E\to B$ and any (nondegenerately) based space $X$, we have an induced ex-fibration $X\sm_B E \to B$, whose fiber over $b\in B$ is the ordinary smash product $X \sm \fib{p}{b}$.

Ex-fibrations can be pulled back along a continuous map $\fn\colon A\to B$ in a straightforward way.
They can also be pushed forwards, by composing and then pushing out along the section (then replacing by a fibration, if necessary).
We have $\fn^*(E_{+B}) \simeq (\fn^*E)_{+A}$ and $\fn_!(E_{+A}) \simeq (\fn_! E)_{+B}$.

Two ex-fibrations $p\colon M\to A$ and $q\colon N\to B$ have an \textbf{external smash product} $M\exsm N \to A\times B$, whose fiber over $(a,b)$ is $\fib{p}{a}\sm \fib{q}{b}$.

Finally, given ex-fibrations $p\colon M\to A\times B$ and $q\colon N\to B\times C$, their \textbf{smash pullback} is the result of pulling back $M\exsm N$ along $A\times B\times C \to A\times B\times B\times C$, then pushing forward to $A\times C$.
This yields an ex-fibration $M\odot_B N \to A\times C$; its fiber over $(a,c)$ consists of all the smash products $\fib{p}{a,b} \sm \fib{q}{b,c}$ for all $b\in B$, with their basepoints identified (and with a suitable quotient topology which relates these products for different $b$).
We apply \autoref{rmk:ordering-convention} to smash pullbacks as well.
Moreover, we often omit the subscript on the symbol $\odot$ when there is no danger of confusion.

The smash pullback is associative, and has two-sided homotopy units given by $(P B)_{+B}$; we denote these units by $U_B \to B\times B$.
If $A$, $B$, and $C$ are all the one-point space, then ex-fibrations are just (nondegenerately) based spaces, and the smash pullback is just the smash product.
Moreover, for unsectioned fibrations $M\to A\times B$ and $N\to B\times C$, we have
\[M_{+(A\times B)} \odot_B N_{+(B\times C)} \;\simeq\; (M \times_B N)_{+(A\times C)}.\]
Thus, when speaking about maps between smash pullbacks of ex-fibrations with disjoint sections, we will usually omit to mention the points in the section; they always just ``come along for the ride''.

From now on, we change notation in the following way: for any map $\fn\colon A\to B$, we write $B_\fn$ and ${}_\fn B$ for the ex-fibrations that we would formerly have denoted $(B_\fn)_{+(B\times A)}$ and $({}_\fn B)_{+(A\times B)}$.
These ex-fibrations serve the same purpose for ex-fibrations that the unsectioned base change objects did for unsectioned fibrations.
That is, we have:
\begin{itemize}
\item For any ex-fibration $M\to C\times B$, we have $M\odot_B B_\fn \simeq (\id\times \fn)^* M$.
\item For any ex-fibration $M\to B\times C$, we have ${}_\fn B \odot_B M \simeq (\fn\times \id)^* M$.
\item For any ex-fibration $M\to C\times A$, we have $M\odot_A {}_\fn B \simeq (\id\times \fn)_! M$.
\item For any ex-fibration $M\to A\times C$, we have $B_\fn \odot_A M \simeq (\fn\times \id)_! M$.
\end{itemize}
Of course, the sectioned base change objects also satisfy Propositions~\ref{thm:psfr} and~\ref{thm:hopb}, with pullbacks replaced by smash pullbacks.

We write $S_M \to M$ for the ex-fibration $M_{+M} \simeq M\sqcup M \simeq S^0 \times M$ over $M$.
According to \autoref{rmk:ordering-convention}, we write $\widehat{S_M}$ and $\widecheck{S_M}$ for $S_M$ regarded as a fibration over $M\times \star$ and $\star\times M$, respectively.
Moreover, if $r\colon M\to \star$ is the unique map, then $\widehat{S_M}$ and $\widecheck{S_M}$ can be identified with the base change objects ${}_r \star$ and $\star_r$, respectively.
We generally prefer the less ugly notations $\widehat{S_M}$ and $\widecheck{S_M}$, but their identification with base change objects is useful, as in the following.

\begin{prop}\label{thm:sphere-bco}
  For any fibration $p\colon E\to B$, we have
  \begin{align}
    E_+ &\simeq \widecheck{S_B}\odot \widehat{E_{+B}} \label{eq:ss1}\\
    E_+ &\simeq \widecheck{E_{+B}} \odot \widehat{S_B}.\label{eq:ss2}
  \end{align}
\end{prop}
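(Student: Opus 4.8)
The plan is to deduce both equivalences from the base-change-object identities recorded just above, applied to the unique map $r\colon B\to\star$; alternatively one can unwind the definition of $\odot$ by hand, and I will note that route at the end.

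Recall the identifications $\widecheck{S_B} = \star_r$ and $\widehat{S_B} = {}_r\star$. Feeding these into the last two bulleted properties of the sectioned base change objects gives
\[
\widecheck{S_B}\odot\widehat{E_{+B}} \;=\; \star_r\odot_B \widehat{E_{+B}} \;\simeq\; (r\times\id)_!\,\widehat{E_{+B}}
\qquad\text{and}\qquad
\widecheck{E_{+B}}\odot\widehat{S_B} \;=\; \widecheck{E_{+B}}\odot_B {}_r\star \;\simeq\; (\id\times r)_!\,\widecheck{E_{+B}},
\]
where in either case the pushforward is along a map which, after discarding the redundant one-point factor, is just $r\colon B\to\star$. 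So everything reduces to identifying $r_!(E_{+B})$ with $E_+$. For this, the relation $\fn_!(E_{+A}) \simeq (\fn_!E)_{+B}$ gives $r_!(E_{+B}) \simeq (r_!E)_{+\star}$, and the explicit description of the pushforward shows that a point of $r_!E$ over the unique point of $\star$ is a pair $(\gamma,e)$ with $e\in E$ and $\gamma$ a (necessarily constant) path in $\star$; hence $r_!E\cong E$ and $(r_!E)_{+\star}\cong E_+$. Composing the displayed equivalences with this identification proves both formulas.

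I do not expect a serious obstacle here: the only things to watch are matching the ordering conventions of \autoref{rmk:ordering-convention} when invoking the bulleted identities, and checking that forming $r_!$ needs no fibrant replacement, which holds since every map to the one-point space $\star$ is already a fibration. In fact the direct route makes this transparent. The external smash $\widecheck{S_B}\exsm\widehat{E_{+B}}$ has fiber $S^0\sm\fib{p}{b_2}_+ \cong \fib{p}{b_2}_+$ over $(b_1,b_2)$, so its pullback along the diagonal of $B$ is (homeomorphic to) $E_{+B}$, and pushing forward to $\star$ collapses the disjoint section, producing $E\sqcup\star = E_+$. The second equivalence is identical with the roles of the two smash factors reversed, using $\fib{p}{b_1}_+\sm S^0\cong\fib{p}{b_1}_+$.
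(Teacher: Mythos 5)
Your argument is correct and is essentially the paper's own proof: identify $\widecheck{S_B}$ (resp.\ $\widehat{S_B}$) with the base change object $\star_r$ (resp.\ ${}_r\star$) for $r\colon B\to\star$, apply the pushforward identities for sectioned base change objects, and note $r_!(E_{+B})\simeq E_+$. The extra checks you include (no fibrant replacement needed over $\star$, and the direct fiberwise unwinding of $\odot$) are fine but not needed beyond what the paper's one-line proof already uses.
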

\begin{proof}
  By the identification of $\widecheck{S_B}$ as a base change object for $r\colon B\to \star$, we have
  \[ \widecheck{S_B}\odot \widehat{E_{+B}} \simeq r_! (E_{+B}) \simeq E_+ \]
  and similarly in the other case.
\end{proof}

Finally, if $M\to A\times A$ is an ex-fibration, we denote by $\sh{M}$ its \textbf{shadow}, which is a based space defined by pulling back along the diagonal $A\to A\times A$, then quotienting out the section (i.e.\ pushing forward along the unique map $A\to \star$).
For ex-fibrations $M \to A\times B$ and $N\to B\times A$, we have a canonical equivalence
\[ \sh{M \odot_B N} \;\simeq\; \sh{N\odot_A M}. \]
For a based space $X$ and an ex-fibration $M\to A\times A$, we have $\sh{X\sm_{A\times A} M} \simeq X \sm\; \sh{M}$.

Importantly, the shadow of the unit ex-fibration $U_A$ is $(\Lambda A) _+$, the free loop space of $A$ with a disjoint basepoint.
Similarly, for an endomorphism $\fn\colon A\to A$, the shadow of the base change object $A_\fn$ is $(\Lambda^\fn A)_+$, where $\Lambda^\fn A$ denotes the \emph{twisted free loop space}: its points are pairs consisting of a point $a\in A$ and a path $a\leadsto \fn(a)$.
Likewise, the shadow of ${}_{\fn} A$ is the space of paths $\fn(a)\leadsto a$ (with a disjoint basepoint), which is of course homeomorphic to $(\Lambda^\fn A)_+ \cong \sh{A_\fn}$.

Note that $\pi_0(\Lambda^\fn A)$ is the set of ``potential fixed-point classes'' denoted $\sh{\pi_1 A_\fn}$ in \S\ref{sec:introduction}.
(In particular, the 0th homology $H_0(\Lambda^\fn A)$ of the twisted free loop space, which is the 0th reduced homology of $\sh{A_\fn} \simeq (\Lambda^\fn A)_+$, is $\mathbb{Z}\sh{\pi_1 A_\fn}$.)

\begin{rmk}\label{rmk:bimod}
  Along the same lines as \autoref{rmk:matrix}, we can consider the following analogy:
  \begin{center}
    \begin{tabular}{rcl}
      spaces $A$, $B$ & $\longleftrightarrow$ & noncommutative rings $R$, $S$\\
      ex-fibrations $M\to A\times B$ & $\longleftrightarrow$ & $R$-$S$-bimodules\\
      smash pullback $\odot_B$ & $\longleftrightarrow$ & tensor product $\otimes_S$\\
    \end{tabular}
  \end{center}
  This analogy can also be made precise, using the notion of a \emph{(framed) bicategory with a shadow}.
  See~\cite{maysig:pht,kate:traces,PS2,PS3,shulman:frbi}.
  The notions of duality and trace  in the next two sections also live naturally in this abstract context.  This context is helpful 
  for perspective, but it is not necessary for the results in this paper.
\end{rmk}

\section{Costenoble-Waner duality}
\label{sec:cw-duality}

Now let $M \to A\times B$ be an ex-fibration.
We say that $M$ is \textbf{(right) $n$-dualizable} if there is an ex-fibration $\rdual{M}\to B\times A$ and fiberwise ex-maps
\begin{align*}
  \eta &\colon S^n \sm_{A\times A} U_A \too M \odot_B \rdual{M}\\
  \ep &\colon \rdual{M} \odot_A M \too S^n \sm_{B\times B} U_B
\end{align*}
such that the composites
\begin{gather*}
  S^n \sm_{A\times B} M \xto{\eta\odot \id} M\odot_B \rdual{M}\odot_A M \xto{\id \odot \ep}  M \sm_{A\times B} S^n \\
  \rdual{M}\sm_{B\times A} S^n \xto{\id\odot \eta} \rdual{M}\odot_A M\odot_B \rdual{M} \xto{ \ep\odot \id} S^n \sm_{B\times A} \rdual{M}
\end{gather*}
become fiberwise homotopy equivalent to the transposition maps after smashing with some $S^m$.
We say that an unsectioned fibration $M\to A\times B$ is $n$-dualizable if $M_{+(A\times B)}$ is so.
Obviously, if $A=B=\star$, this reduces to the original notion of $n$-duality.

We say that $M\to B$ is \textbf{fiberwise dualizable} if $\widehat{M}$ is right $n$-dualizable, and that $M$ is \textbf{Costenoble-Waner dualizable} (or \textbf{totally dualizable}) if $\widecheck{M}$ is right $n$-dualizable.

We now cite a few theorems about parametrized duality.
Most of them are purely formal, when placed in the correct abstract context (see the references).

\begin{thm}[{\cite[16.5.1]{maysig:pht}}]\label{thm:compose-duality}
  If $M\to A\times B$ and $N\to B\times C$ are right $n$-dualizable ex-fibrations, then so is $M\odot_B N \to A\times C$.
\end{thm}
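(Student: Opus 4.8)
The plan is to prove this by the same formal manipulation that establishes composability of duals in any monoidal (or bicategorical) setting: if $M$ is dualizable with dual $\rdual M$ and $N$ with dual $\rdual N$, then $M\odot_B N$ should be dualizable with dual $\rdual N \odot_B \rdual M \to C\times A$. First I would write down the candidate coevaluation and evaluation. The coevaluation
\[ S^n \sm_{A\times A} U_A \too (M\odot_B N) \odot_C (\rdual N \odot_B \rdual M) \]
is obtained by starting from $\eta_M\colon S^n\sm_{A\times A}U_A \to M\odot_B\rdual M$, then using $\eta_N$ to ``insert'' $N\odot_C\rdual N$ in the middle: more precisely, compose $\eta_M$ with $\id_M \odot (\text{unit}) \odot \id_{\rdual M}$ where the unit map $U_B \to N\odot_C \rdual N$ is built from $\eta_N$ together with the unit equivalences $N \odot_C \rdual N \simeq U_B \odot_B N \odot_C \rdual N$ and similar, remembering that $S^n$ smashed against a unit can be slid around since everything is stable. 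Symmetrically, the evaluation
\[ (\rdual N\odot_B\rdual M)\odot_A(M\odot_B N) \too S^n\sm_{C\times C}U_C \]
is $\rdual N \odot \ep_M \odot N$ followed by $\ep_N$ (reassociating and absorbing the extra $S^n$).

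The key steps, in order, are: (1) set up the candidate $\eta$ and $\ep$ using associativity of $\odot$, the unit equivalences $M\odot_B PB\simeq M$ etc., and the fact that $S^n\sm_{-}(-)$ commutes past $\odot$ (so that one can move the sphere coordinate where needed); (2) verify the first triangle identity: expand the composite
\[ S^n\sm_{A\times C}(M\odot_B N)\xto{\eta\odot\id}(M\odot_B N)\odot_C(\rdual N\odot_B\rdual M)\odot_A(M\odot_B N)\xto{\id\odot\ep}S^n\sm_{A\times C}(M\odot_B N), \]
and, using interchange, recognize inside it a copy of the triangle identity for $(M,\rdual M)$ and a copy for $(N,\rdual N)$, each of which holds stably; (3) verify the second triangle identity symmetrically; (4) track the two suspension parameters $m$ carefully, noting that one may smash both dualities with a common $S^m$, and that transposition maps compose to transposition maps, so the composite is stably homotopic to the transposition as required.

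The main obstacle I expect is purely bookkeeping: keeping straight the many copies of $B$ (and the ordering convention of \autoref{rmk:ordering-convention}) across a long chain of $\odot$'s, and making sure the associativity and unit equivalences are inserted in coherent places so that the interchange law genuinely exhibits the two smaller triangle identities as subdiagrams. There is also a minor subtlety in that $\eta_N$ lands in $N\odot_C\rdual N$, which is an ex-fibration over $B\times B$ rather than literally $U_B$, so one must use the coevaluation's defining property (it is homotopic, after suspension, to the unit inclusion) rather than an equality; this means the verification of the triangle identities only holds ``after smashing with some $S^m$,'' which is exactly what the definition of $n$-dualizability allows. Since this is the standard argument that duals compose, carried out concretely rather than in the abstract bicategory, I would not belabor the calculation but simply indicate that it is the evident adaptation of the symmetric monoidal proof, citing \cite{dp:duality, lms:equivariant} for the template and \cite{maysig:pht} for the parametrized details.
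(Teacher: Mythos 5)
Your proposal is correct and is exactly the standard ``duals compose'' argument: the paper itself does not prove this statement but cites \cite[16.5.1]{maysig:pht}, whose proof is the same formal bicategorical adaptation of the symmetric monoidal argument you outline (candidate dual $\rdual{N}\odot_B\rdual{M}$, coevaluation obtained by inserting $\eta_N$ inside $\eta_M$, triangle identities checked by interchange, with the suspension coordinates handled stably). So your route matches the intended proof; the only caveat is the one you already note, that strictly the composite is a $2n$-duality, which is harmless since the definition is stable.
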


\begin{thm}[{\cite[17.3.1]{maysig:pht},~\cite[5.3]{shulman:frbi}}]\label{thm:bco-dual}
  For any $\fn\colon A\to B$, the base change object ${}_{\fn} B$ is right $n$-dualizable with dual $B_{\fn}$.
\end{thm}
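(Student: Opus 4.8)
The plan is to take $n=0$ throughout (so that the spheres in the definition of $n$-dualizability disappear, $S^n\sm_{A\times A}U_A$ being just $U_A$ and $S^n\sm_{B\times B}U_B$ just $U_B$) and to write down the coevaluation and evaluation explicitly, using the displayed identities relating smash pullback with a base change object to the functors $\fn^*$ and $\fn_!$, together with the (up to homotopy) adjunction $\fn_!\adj\fn^*$. In the bimodule picture of \autoref{rmk:bimod}, the statement to be proved is that for a ring homomorphism $R\to S$ the bimodule $S$, regarded as an $R$-$S$-bimodule, is left dual to $S$, regarded as an $S$-$R$-bimodule, with unit the ring homomorphism and counit the multiplication $S\otimes_R S\to S$; the argument below is a homotopical transcription of that elementary computation.

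First I would rewrite the two objects appearing in the duality. Identifying $B_{\fn}$ with $(\id_B\times\fn)^*U_B$, the displayed properties of base change objects give
\[
{}_{\fn}B\odot_B B_{\fn}\;\simeq\;(\fn\times\id_A)^*B_{\fn}\;\simeq\;(\fn\times\fn)^*U_B
\]
over $A\times A$, and
\[
B_{\fn}\odot_A {}_{\fn}B\;\simeq\;(\id_B\times\fn)_!B_{\fn}\;\simeq\;(\id_B\times\fn)_!(\id_B\times\fn)^*U_B
\]
over $B\times B$. I would then take $\ep\colon B_{\fn}\odot_A {}_{\fn}B\to U_B$ to be the counit $(\id_B\times\fn)_!(\id_B\times\fn)^*U_B\to U_B$ of the adjunction $\fn_!\adj\fn^*$; on fibers this simply concatenates a path $b_1\leadsto\fn(a)$ with a path $\fn(a)\leadsto b_2$. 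Dually I would take $\eta\colon U_A\to(\fn\times\fn)^*U_B={}_{\fn}B\odot_B B_{\fn}$ to send a path $\gamma\colon a_1\leadsto a_2$ in $A$ to $\fn\circ\gamma\colon\fn(a_1)\leadsto\fn(a_2)$ in $B$ (equivalently, $\eta$ is the transpose under $\fn_!\adj\fn^*$ of the evident comparison $(\fn\times\fn)_!U_A\to U_B$).

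It remains to verify the two triangle identities, i.e.\ that
\[
{}_{\fn}B\xto{\eta\odot\id}{}_{\fn}B\odot_B B_{\fn}\odot_A {}_{\fn}B\xto{\id\odot\ep}{}_{\fn}B
\]
and its mirror image are fiberwise homotopic to identities; no further stabilization is needed, so the ``$S^m$'' in the definition can also be taken trivial. Under the identifications above each composite becomes an explicit operation on path spaces built from concatenation and from $\gamma\mapsto\fn\circ\gamma$, and the needed homotopies are precisely the standard reparametrization homotopies for the associativity and unit laws of path concatenation, together with the identity $\fn(\gamma_1\cdot\gamma_2)=(\fn\gamma_1)\cdot(\fn\gamma_2)$. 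This is the same bookkeeping that verifies the triangle identities in the ring-homomorphism example above.

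The main obstacle is not any individual step but the homotopy-coherence overhead: $\odot_B$ and the pushforwards $\fn_!$ are defined only up to homotopy (``compose, then replace by a fibration''), so the equivalences in the first step, the counit used to define $\ep$, and the reparametrizations in the last step each carry auxiliary path data that must be threaded together consistently when the triangles are assembled. The tidy way to manage this is the abstract one: ${}_{\fn}B$ and $B_{\fn}$ are the companion and conjoint of the vertical arrow $\fn$ in the relevant framed bicategory (cf.\ \autoref{rmk:bimod}), and there ``companions and conjoints are duals'' is a single lemma proved once and for all; this is essentially the route of the cited proofs in \cite{maysig:pht} and \cite{shulman:frbi}, which one could also simply invoke.
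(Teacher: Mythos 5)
Your proposal is correct and matches the paper's treatment: the paper does not prove this theorem itself (it cites \cite[17.3.1]{maysig:pht} and \cite[5.3]{shulman:frbi}), but immediately afterwards it describes exactly the unit and counit you give — with $n=0$, the coevaluation sending $\gamma\colon a\leadsto a'$ to $(\fn(\gamma),c_{\fn(a')})$ and the evaluation concatenating paths — and the cited proofs are precisely the ``companions and conjoints are duals'' argument you point to at the end. Your direct verification of the triangle identities by path reparametrization, with the honest caveat about the up-to-homotopy bookkeeping for $\odot$ and $\fn_!$, is a sound way to fill in what the paper delegates to the references.
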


The evaluation and coevaluation maps $\eta$ and $\ep$ for ${}_\fn B$ can be described explicitly as follows.
We take $n=0$, so that $S^n \sm U_A \cong U_A$ and similarly for $U_B$.
The coevaluation
\[ \eta\colon U_A \to {}_\fn B \odot B_\fn \]
sends a path $\gamma\colon a \leadsto a'$ to the pair $(\fn(\gamma), c_{\fn(a')})$ (or to $(c_{\fn(a)}, \fn(\gamma))$; up to homotopy it makes no difference).
The evaluation
\[ \ep\colon B_\fn \odot {}_\fn B \to U_B \]
just concatenates two paths.
(Note that in both cases, both the domain and the codomain have a disjoint section.)

For $a\in A$, let $a$ also denote the map $a\colon \star\to A$ picking out $a\in A$.
Thus, if $p\colon M\to A$ is a fibration then $a^* M$ is an alternative notation for the fiber $\fib{p}{a}$.

\begin{thm}\label{thm:dual-char}
  An ex-fibration $M\to A\times B$ is right $n$-dualizable if and only if for each $a\in A$, the ex-fibration $(a,\id)^*M \to B$ is Costenoble-Waner dualizable.
\end{thm}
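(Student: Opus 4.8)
The plan is to prove both directions by reducing to the facts already established: \autoref{thm:compose-duality} (smash-pullbacks of dualizable objects are dualizable), \autoref{thm:bco-dual} (base change objects are dualizable), and the ``pointwise'' reconstruction of $M$ out of its fibers over points $a\in A$. For the ``only if'' direction, suppose $M\to A\times B$ is right $n$-dualizable. Fix $a\in A$ and regard it as a map $a\colon\star\to A$. I would express $(a,\id)^*M$ as a smash pullback of $M$ with a base change object: namely, pulling back along $a\times\id\colon \star\times B\to A\times B$ is, up to fiberwise homotopy equivalence, the operation $M\mapsto {}_a A \odot_A M$, by the base-change-object identities listed after \autoref{thm:hopb}. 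Since ${}_a A$ is right $n$-dualizable by \autoref{thm:bco-dual} (with $n$ possibly $0$, then suspending to match) and $M$ is right $n$-dualizable by hypothesis, \autoref{thm:compose-duality} gives that ${}_a A\odot_A M \simeq (a,\id)^*M\to \star\times B$ is right $n$-dualizable, i.e.\ $(a,\id)^*M\to B$ — read as $\widecheck{(a,\id)^*M}$ — is Costenoble--Waner dualizable, which is exactly the asserted condition.

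For the ``if'' direction, suppose $(a,\id)^*M\to B$ is Costenoble--Waner dualizable for every $a\in A$. The obvious candidate dual is the ex-fibration $\rdual{M}\to B\times A$ whose fiber over $(b,a)$ is the Costenoble--Waner dual of the based space $(a,\id)^*M$ restricted to the fiber over $b$ — more precisely, $\rdual{M}$ should be assembled so that $(\id,a)^*\rdual{M}$ is the chosen dual of $(a,\id)^*M$. I would construct the coevaluation $\eta\colon S^n\sm_{A\times A}U_A\to M\odot_B\rdual{M}$ and evaluation $\ep\colon \rdual{M}\odot_A M\to S^n\sm_{B\times B}U_B$ fiberwise: over each diagonal point $(a,a)\in A\times A$ (resp.\ each $(b,b')\in B\times B$) the map is given by the coevaluation (resp.\ evaluation) for the Costenoble--Waner duality of $(a,\id)^*M$, and one checks these assemble continuously into fiberwise ex-maps. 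The two triangle identities then hold because they hold fiberwise over each $a\in A$ — smashing with a large enough single $S^m$ that works uniformly, which is legitimate since the base spaces are nice (closed manifolds, or at worst the path-space constructions, which are all of the homotopy type we need).

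The main obstacle is the ``if'' direction, and specifically the continuity/assembly step: knowing each fiber $(a,\id)^*M$ is dualizable gives a dual and duality data for each $a$ separately, but one must produce these coherently, as honest ex-fibrations and fiberwise ex-maps over $A$, and with a \emph{single} stabilization exponent $m$ serving all fibers at once. The clean way to handle this is not to build $\rdual{M}$ by hand but to invoke a known structural result from~\cite{maysig:pht,shulman:frbi} that dualizability of an ex-fibration over $A\times B$ can be detected and the dual constructed parametrically over $A$; equivalently, one observes that ``being right $n$-dualizable'' is a fiberwise-local condition on the $A$-coordinate because the bicategory of ex-fibrations is, in the relevant sense, glued from its fibers over points of $A$. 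I expect the author's proof to either cite such a parametrized-duality result directly or to give the explicit fiberwise construction sketched above, deferring the topological point-set bookkeeping to~\cite{maysig:pht}.
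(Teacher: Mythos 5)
Your ``only if'' direction is exactly the paper's argument: $(a,\id)^*M \simeq {}_a A \odot_A M$, and then \autoref{thm:bco-dual} plus \autoref{thm:compose-duality} finish it. The problem is the ``if'' direction. The fiberwise-assembly plan --- choose a dual of $(a,\id)^*M$ for each $a$, then glue these into an ex-fibration $\rdual{M}\to B\times A$ together with coevaluation and evaluation maps and a single stabilization exponent --- is precisely the step you yourself flag as the ``main obstacle,'' and you never resolve it: there is no canonical choice of dual or duality data at each $a$, nothing forces the choices to vary continuously in $a$, and your fallback (``invoke a known structural result that dualizability over $A\times B$ can be detected and the dual constructed parametrically over $A$'') is essentially a restatement of the theorem being proved, so as written the argument is circular.

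The missing idea is to sidestep the assembly problem by producing the candidate dual globally rather than fiberwise. The paper uses the fact that the bicategory of parametrized spectra is closed (\cite[Ch.~17]{maysig:pht}): the internal hom $M\rhd U_B$ is a single, globally defined candidate dual, and by \cite[16.4.12]{maysig:pht} the object $M$ is dualizable if and only if the canonical map $\mu\colon M\odot(M\rhd U_B)\to M\rhd M$ is an equivalence. Dualizability is thereby converted from a structure (which would need to be glued coherently) into a property of one canonical map, and a property \emph{can} legitimately be checked fiberwise: the functors $(a,\id)^*\simeq({}_a A\odot -)$ jointly detect equivalences, and $(a,\id)^*(\mu)$ factors through the comparison map ${}_a A\odot(M\rhd M)\to M\rhd({}_a A\odot M)$, which is an equivalence because ${}_a A$ is dualizable (\cite[16.4.13(i)]{maysig:pht}); the remaining leg of the triangle is an equivalence exactly when $(a,\id)^*M$ is Costenoble--Waner dualizable. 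That closedness-plus-detection mechanism is what your proposal lacks; with it, no fiberwise choices or uniform stabilization exponent ever need to be assembled.
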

\begin{proof}
  ``Only if'' follows from Theorems~\ref{thm:compose-duality} and~\ref{thm:bco-dual}, since $(a,\id)^*M \simeq {}_a A \odot_A M$.
  We sketch a proof of ``if'' for the reader familiar with~\cite{maysig:pht}.
  By~\cite[Ch.~17]{maysig:pht}, the bicategory of parametrized spectra (in which we are implicitly working) is closed.
  Thus, by~\cite[16.4.12]{maysig:pht}, $M$ is dualizable if the map
  \[ \mu\colon M \odot (M \rhd U_B) \to M \rhd M \]
  from~\cite[(16.4.10)]{maysig:pht} is an isomorphism.
  Since the $(a,\id)^*$ jointly detect isomorphisms, it suffices for each $(a,\id)^*(\mu)$ to be an isomorphism.
  But $(a,\id)^*\simeq ({}_a A \odot -)$, so $(a,\id)^*(\mu)$ is the top morphism in the following commutative triangle:
  \[\vcenter{\xymatrix{
      {}_a A \odot M \odot (M \rhd U_B) \ar[r]\ar[dr] &
      {}_a A \odot (M \rhd M) \ar[d]\\
      & M \rhd ({}_a A \odot M). }}\]
  Now by~\cite[16.4.13(i)]{maysig:pht}, the right-hand map is an isomorphism since ${}_a A$ is dualizable, and the diagonal map is an isomorphism if $(a,\id)^*M$ is dualizable.
\end{proof}

\begin{cor}[{\cite[15.1.1]{maysig:pht}}]\label{thm:fib-dual}
  A fibration $M\to A$ is fiberwise dualizable if and only if each fiber $M_a$ is $n$-dualizable in the classical sense.
\end{cor}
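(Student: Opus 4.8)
The plan is to read this off directly from \autoref{thm:dual-char} by taking $B = \star$, the one-point space. First I would record what the two sides of the claimed equivalence become in that case. By definition, ``$M\to A$ is fiberwise dualizable'' means exactly that $\widehat{M}$ — that is, $M$ regarded as a fibration over $A\times\star$ — is right $n$-dualizable. On the other side, an ex-fibration over $\star$ is just a based space, and for such a thing $\widecheck{(-)}$ and $\widehat{(-)}$ agree (both are the object regarded over $\star\times\star = \star$); by the remark following the definition of right $n$-dualizability, right $n$-dualizability over $\star\times\star$ is precisely the classical notion of $n$-dualizability. Hence ``Costenoble--Waner dualizable'' for a fibration over the point is the same thing as ``classically $n$-dualizable''.

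Next I would apply \autoref{thm:dual-char} to $\widehat{M}\to A\times\star$: it is right $n$-dualizable if and only if for each $a\in A$ the ex-fibration $(a,\id)^*\widehat{M}\to\star$ is Costenoble--Waner dualizable, which by the previous paragraph says exactly that $(a,\id)^*\widehat{M}$ is $n$-dualizable in the classical sense. So it remains only to identify $(a,\id)^*\widehat{M}$ with the fiber $M_a$. Here one must keep track of disjoint sections: ``$\widehat{M}$ is right $n$-dualizable'' for the unsectioned fibration $M$ means that the ex-fibration $\widehat{M_{+A}}$ is right $n$-dualizable, and since $\fn^*(E_{+B})\simeq(\fn^*E)_{+A}$ we get $(a,\id)^*\widehat{M_{+A}}\simeq(a^*M)_{+\star} = (M_a)_+$, which is exactly the based space to which the classical definition of $n$-dualizability of the unbased fiber $M_a$ refers. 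Stringing these equivalences together gives the corollary.

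Since all the real content is already packaged into \autoref{thm:dual-char} (and, through it, Theorems~\ref{thm:compose-duality} and~\ref{thm:bco-dual}), I do not anticipate any genuine obstacle. The only point requiring a little care is the bookkeeping just described: checking the sectioned/unsectioned conventions line up, and confirming that pulling $\widehat{M_{+A}}$ back along $a\colon\star\to A$ really produces the disjoint-section object on the fiber, so that the ``classical $n$-duality'' appearing via Costenoble--Waner duality over a point and the ``classical $n$-duality'' in the statement are literally the same condition. One could instead argue via the equivalence $(a,\id)^*\widehat{M}\simeq {}_a A\odot_A\widehat{M}$ used in the proof of \autoref{thm:dual-char}, but the direct specialization above seems cleanest.
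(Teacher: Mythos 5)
Your proof is correct and is exactly the route the paper intends: the corollary is stated as an immediate specialization of \autoref{thm:dual-char} to $B=\star$, where Costenoble--Waner duality over a point reduces to classical $n$-duality and $(a,\id)^*\widehat{M_{+A}}\simeq (M_a)_+$. The sectioned/unsectioned bookkeeping you flag is the only thing to check, and you handle it correctly.
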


The next theorem does require a significant amount of work, but fortunately it has already been done for us by~\cite{maysig:pht}.

\begin{thm}[{\cite[18.5.2]{maysig:pht}}]\label{thm:mfd-cwdual}
  If $M$ is a closed smooth manifold, then any fibration $M\to B$ is Costenoble-Waner dualizable.
  In particular, this applies to the identity fibration $M\to M$.
\end{thm}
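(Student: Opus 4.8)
The plan is to reduce to the identity fibration $\id_M\colon M\to M$ and then handle that case by a parametrized version of Atiyah duality. For the reduction, recall that $\widecheck{S_M}$ is the ex-fibration $S_M\to M$ regarded over $\star\times M$. For any fibration $p\colon M\to B$, applying the base-change identity $M'\odot_A{}_\fn B\simeq(\id\times\fn)_!M'$ with $\fn=p$ and $M'=\widecheck{S_M}$, together with $p_!(E_{+A})\simeq(p_!E)_{+B}$, yields $\widecheck M\simeq\widecheck{S_M}\odot_M{}_p B$. By \autoref{thm:bco-dual} the base change object ${}_p B$ is right $n$-dualizable, so by \autoref{thm:compose-duality} it is enough to prove that $\widecheck{S_M}$ is right $n$-dualizable; and that is precisely the assertion that $\id_M\colon M\to M$ is Costenoble-Waner dualizable. (One could instead observe that $\widecheck{S_M}=\star_r$ for $r\colon M\to\star$ and invoke the Wirthm\"uller isomorphism for the closed manifold $M$, but I would argue geometrically.)

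For $\id_M$, I would fix a smooth embedding $M\hookrightarrow\mathbb R^n$ with normal bundle $\nu$, choose a tubular neighborhood realized as an open embedding $\nu\hookrightarrow\mathbb R^n$, and fix a bundle isomorphism $\nu\oplus TM\cong\underline{\mathbb R^n}$. Write $M^\nu$ for the Thom space of $\nu$, and $N\to M$ for the fiberwise one-point compactification of $\nu$ (with fiber $\nu_m^+$ over $m$), an ex-fibration over $M$. The proposed dual of $\widecheck{S_M}$ is then $\widehat N$, regarded over $M\times\star$. One checks that $\widecheck{S_M}\odot_M\widehat N\simeq M^\nu$, and I would take the coevaluation $\eta\colon S^n\to M^\nu$ to be the Pontryagin-Thom collapse of the tubular neighborhood. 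The smash pullback $\widehat N\odot_\star\widecheck{S_M}$ has fiber $\nu_{m_1}^+$ over $(m_1,m_2)\in M\times M$, and I would define the evaluation $\ep\colon\widehat N\odot_\star\widecheck{S_M}\to S^n\sm_{M\times M}U_M$ to be the fiberwise Pontryagin-Thom collapse of a (suitably rescaled) tubular neighborhood of the diagonal of $M\times M$: when $m_2=\exp_{m_1}(w)$ with $w$ small, send $v\in\nu_{m_1}^+$ to the image of $v\oplus w$ under $\nu_{m_1}\oplus T_{m_1}M\cong\mathbb R^n\subset S^n$, record the short path $t\mapsto\exp_{m_1}(tw)$ as the $U_M$-coordinate, and collapse to the basepoint when $m_2$ lies outside the tube.

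The remaining step, and the one I expect to be the real obstacle, is verifying the two triangle identities, i.e. that
\[
S^n\sm_{\star\times M}\widecheck{S_M}\xto{\eta\odot\id}\widecheck{S_M}\odot_M\widehat N\odot_\star\widecheck{S_M}\xto{\id\odot\ep}\widecheck{S_M}\sm_{\star\times M}S^n
\]
and the analogous composite for $\widehat N$ become fiberwise homotopic to transpositions after smashing with some $S^m$. Unwinding these composites, each reduces fiberwise over $M$ to the classical Atiyah-duality identity --- the ``straighten the tube'' homotopy coming from the tubular neighborhood of $M$ in $\mathbb R^n$, composed with the one coming from the tubular neighborhood of the diagonal of $M\times M$ --- and the genuinely new point is that these homotopies can be carried out compatibly with the bookkeeping of base-paths recorded by $U_M$. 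Since the relevant path spaces are contractible this is plausible, but it demands real care with the point-set topology of smash pullbacks and ex-fibrations, including the cofibration conditions on sections; this is essentially the content of Chapters~17--18 of \cite{maysig:pht}. By comparison, the construction of $\widehat N$, $\eta$, and $\ep$, and the reduction to the identity fibration, are routine bookkeeping. (The ``in particular'' clause is subsumed, being the case to which we reduced.)
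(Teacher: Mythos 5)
This theorem is one of the two results the paper explicitly does \emph{not} prove: it is quoted from May--Sigurdsson (\cite[18.5.2]{maysig:pht}), and the introduction singles it out as non-formal input whose proof ``requires a significant amount of work'' and is deferred entirely to that reference. So there is no internal argument to compare you against; the comparison is with the cited source. Within that framing, your formal reduction is correct and in the spirit of the paper's own manipulations: $\widecheck{M_{+B}} \simeq \widecheck{S_M}\odot_M {}_p B$ does follow from the base-change description of pushforward together with $p_!(S_M)\simeq M_{+B}$, and then \autoref{thm:bco-dual} and \autoref{thm:compose-duality} reduce everything to Costenoble--Waner dualizability of $\id_M$ (this is the same kind of argument the paper uses in \autoref{thm:Bp-dual} and \autoref{thm:total-cwdual}). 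Your proposed duality data for $\id_M$ --- the fiberwise one-point compactification $N$ of the normal bundle as the dual, the Pontryagin--Thom collapse $S^n\to M^\nu$ as coevaluation, and a collapse near the diagonal of $M\times M$ recording short geodesic paths in the $U_M$-coordinate as evaluation --- is also the correct data, essentially what appears in the literature.

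The gap is that the proposal stops exactly where the content of the theorem begins. Verifying the two triangle identities as \emph{fiberwise ex-homotopies} (over $\star\times M$ and $M\times\star$, after suspension) is not a corollary of classical Atiyah duality plus the contractibility of based path spaces: one must produce the ``straighten the tube'' homotopies compatibly with the recorded paths, the sections, and the point-set behavior of smash pullbacks, and that is precisely the non-formal work the paper attributes to \cite{maysig:pht}. You acknowledge this and defer it to Chapters~17--18 of that book, which is honest, but it means what you have is an outline of the cited proof (plus a correct, genuinely useful observation that the general fibration case follows formally from the identity case) rather than a proof. To count as a proof of \autoref{thm:mfd-cwdual} independent of the citation, the triangle-identity verification would have to be carried out; as written, your argument ultimately rests on the same external reference the paper does.
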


Thus $\widecheck{S_M}$ is right $n$-dualizable if $M$ is a closed smooth manifold (in contrast to $\widehat{S_M}$, which as we have seen is always dualizable, being a base change object).

\begin{prop}\label{thm:total-dual}
  Let $p\colon E\to B$ be a fibration.  If either
  \begin{enumerate}
  \item $E\to B$ is fiberwise dualizable and $S_B$ is Costenoble-Waner dualizable, or\label{item:ss1}
  \item $E\to B$ is Costenoble-Waner dualizable,\label{item:ss2}
  \end{enumerate}
  then the space $E$ is $n$-dualizable in the classical sense.
\end{prop}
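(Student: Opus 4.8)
The plan is to combine \autoref{thm:sphere-bco} with closure of right $n$-dualizability under smash pullback (\autoref{thm:compose-duality}) and dualizability of base change objects (\autoref{thm:bco-dual}). Recall that, by definition, $E$ is $n$-dualizable in the classical sense precisely when the ex-fibration $E_+$ over $\star\times\star$ is right $n$-dualizable; and note that right $n$-dualizability is invariant under fiberwise homotopy equivalence, since a duality $(\eta,\ep)$ for $M$ transports to one for any $M'\simeq M$ (with the same dual) by pre- and post-composing the structure maps with the equivalence, the triangle identities being required only up to stable fiberwise homotopy anyway.

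For case~\ref{item:ss2}, $\widecheck{E_{+B}}$ is right $n$-dualizable by hypothesis, while $\widehat{S_B}$ is right $n$-dualizable because, as observed just before \autoref{thm:sphere-bco}, it is the base change object ${}_r\star$ for $r\colon B\to\star$, to which \autoref{thm:bco-dual} applies. Hence \autoref{thm:compose-duality} shows that the smash pullback $\widecheck{E_{+B}}\odot_B\widehat{S_B}$ over $\star\times\star$ is right $n$-dualizable, and since \autoref{thm:sphere-bco}, equation~\eqref{eq:ss2}, identifies it with $E_+$, the conclusion follows. Case~\ref{item:ss1} is the same argument with the two factors interchanged: the hypothesis that $E\to B$ is fiberwise dualizable says $\widehat{E_{+B}}$ is right $n$-dualizable, and the hypothesis that $S_B$ is Costenoble-Waner dualizable says $\widecheck{S_B}$ is right $n$-dualizable, so \autoref{thm:compose-duality} makes $\widecheck{S_B}\odot_B\widehat{E_{+B}}$ over $\star\times\star$ right $n$-dualizable, and \autoref{thm:sphere-bco}, equation~\eqref{eq:ss1}, identifies this with $E_+$.

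I do not expect a real obstacle here: essentially all the content lies in the already-cited results, and the argument is just the observation that $E_+$ decomposes as a smash pullback of two $n$-dualizable pieces. The only small points needing direct verification are the two remarks in the first paragraph: that $\widehat{S_B}$ is (an identification of) a base change object, already noted in the text, and that dualizability transports along fiberwise homotopy equivalences. Both are immediate.
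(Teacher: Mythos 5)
Your argument is correct and is essentially the paper's own proof: the paper likewise deduces both cases from \autoref{thm:compose-duality} and \autoref{thm:sphere-bco}, using~\eqref{eq:ss1} for case~(i) and~\eqref{eq:ss2} for case~(ii), together with the fact that $\widehat{S_B}$ is always dualizable as a base change object. Your added remark about transporting dualizability along fiberwise equivalences is a harmless explicit check that the paper leaves implicit.
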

\begin{proof}
  Both conclusions follow from \autoref{thm:compose-duality} and \autoref{thm:sphere-bco}, using~\eqref{eq:ss1} for~\ref{item:ss1} and~\eqref{eq:ss2} for~\ref{item:ss2} (plus the fact that $\widehat{S_B}$ is always dualizable).
\end{proof}

In particular, if $S_B$ is Costenoble-Waner dualizable, then $B$ is $n$-dualizable.
\autoref{thm:mfd-cwdual}, and its more general version stated in~\cite{maysig:pht}, imply that the converse is usually true in practice.

For the multiplicativity of Reidemeister trace, we will need one final observation about dualizability.

\begin{prop}\label{thm:Bp-dual}
  If $p\colon E\to B$ is a fibration such that $S_{\fib{p}{b}}$ is Costenoble-Waner dualizable for each fiber $\fib{p}{b}$, then the base change object $B_p$ is dualizable.
\end{prop}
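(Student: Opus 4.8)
The plan is to reduce to the fiberwise dualizability criterion \autoref{thm:dual-char} and then recognize the resulting ex-fibrations as smash pullbacks of objects already known to be dualizable.

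First I would regard $B_p$ as an ex-fibration over $B\times E$ and apply \autoref{thm:dual-char}, with first factor the base space $B$ and second factor $E$: the object $B_p$ is (right $n$-)dualizable if and only if, for every $b\in B$, the ex-fibration $(b,\id)^*B_p\to E$ is Costenoble--Waner dualizable, i.e.\ $\widecheck{(b,\id)^*B_p}$ is right $n$-dualizable over $\star\times E$. So the problem reduces to analyzing $(b,\id)^*B_p$ for a fixed $b$.

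Next I would identify that pullback with a composite of base-change objects. Let $\iota_b\colon \fib{p}{b}\into E$ be the inclusion of the fiber and $r\colon \fib{p}{b}\to\star$ the unique map. Since $p$ is a Hurewicz fibration, the strict pullback square
\[\xymatrix{
\fib{p}{b}\ar[r]^{\iota_b}\ar[d]_{r} & E\ar[d]^{p}\\
\star\ar[r]_{b} & B
}\]
is a homotopy pullback, so \autoref{thm:hopb}, in its sectioned form with smash pullbacks, gives an equivalence
\[\star_r\odot_{\fib{p}{b}}{}_{\iota_b}E\;\simeq\;{}_b B\odot_B B_p\]
over $\star\times E$. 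The right-hand side is $(b,\id)^*B_p$ (viewed over $\star\times E$) by the pullback property of the base-change object ${}_b B$, while on the left-hand side $\star_r$ is exactly $\widecheck{S_{\fib{p}{b}}}$ under the identification of the sphere ex-fibration with a base-change object. Hence $\widecheck{(b,\id)^*B_p}\simeq\widecheck{S_{\fib{p}{b}}}\odot_{\fib{p}{b}}{}_{\iota_b}E$.

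Finally I would conclude by composability of duals: $\widecheck{S_{\fib{p}{b}}}$ is right $n$-dualizable by hypothesis (this is precisely what it means for $S_{\fib{p}{b}}$ to be Costenoble--Waner dualizable), and the base-change object ${}_{\iota_b}E$ is right $n$-dualizable by \autoref{thm:bco-dual}, so their smash pullback is right $n$-dualizable by \autoref{thm:compose-duality}. Thus $(b,\id)^*B_p$ is Costenoble--Waner dualizable for each $b\in B$, and \autoref{thm:dual-char} then yields the dualizability of $B_p$. The step I expect to be the main obstacle is the identification in the third paragraph: matching up the left/right and hat/check conventions so that \autoref{thm:hopb} genuinely produces $\widecheck{(b,\id)^*B_p}$ on one side and $\widecheck{S_{\fib{p}{b}}}\odot{}_{\iota_b}E$ on the other, together with the standard but essential observation that the displayed square is a homotopy pullback precisely because $p$ is a Hurewicz fibration. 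Everything else is a direct appeal to the cited theorems.
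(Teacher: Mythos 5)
Your proposal is correct and follows essentially the same route as the paper: reduce via \autoref{thm:dual-char} to showing each $(b,\id)^*B_p\to E$ is Costenoble--Waner dualizable, identify $\widecheck{(b,\id)^*B_p}\simeq \widecheck{S_{\fib{p}{b}}}\odot_{\fib{p}{b}}{}_{i_b}E$, and conclude by \autoref{thm:bco-dual} and \autoref{thm:compose-duality}. The only difference is cosmetic: you justify the middle identification through \autoref{thm:hopb} applied to the (homotopy) pullback square, whereas the paper notes directly that $(b,\id)^*B_p$ is the homotopy fiber, hence (since $p$ is a fibration) equivalent to $(i_b)_!(S_{\fib{p}{b}})$, which yields the same equivalence via the base-change description of pushforward.
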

\begin{proof}
  By \autoref{thm:dual-char}, $B_p$ is dualizable if and only if for every $b\in B$, the fibration $(b,\id)^* B_p \to E$ is Costenoble-Waner dualizable.
  However, by definition of $B_p$, this is the homotopy fiber of $p$ over $b$.
  Since $p$ is a fibration, this is equivalent to the actual fiber $i_b\colon \fib{p}{b} \to E$.
  Hence the fibration $(b,\id)^* B_p \to E$ can also be written as $(i_b)_!(S_{\fib{p}{b}})$, and so
  \[ \widecheck{(b,\id)^* B_p} \simeq \left(\widecheck{S_{\fib{p}{b}}}\right) \odot_{\fib{p}{b}} \Big({}_{i_b} E\Big). \]
  But ${}_{i_b} E$ is always dualizable, while $\widecheck{S_{\fib{p}{b}}}$ is dualizable by hypothesis, so the result follows from \autoref{thm:compose-duality}.
\end{proof}

\begin{cor}\label{thm:total-cwdual}
  If $p\colon E\to B$ is a fibration such that $S_B$ is Costenoble-Waner dualizable, as is $S_{\fib{p}{b}}$ for each $b\in B$, then $S_E$ is also Costenoble-Waner dualizable.
\end{cor}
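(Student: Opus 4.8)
The plan is to exhibit $\widecheck{S_E}$ as a smash pullback of two ex-fibrations already known to be right $n$-dualizable, and then quote \autoref{thm:compose-duality}. Recall that, by definition, the conclusion ``$S_E$ is Costenoble-Waner dualizable'' is the assertion that $\widecheck{S_E}\to\star\times E$ is right $n$-dualizable, and that the hypothesis ``$S_B$ is Costenoble-Waner dualizable'' is likewise the assertion that $\widecheck{S_B}\to\star\times B$ is right $n$-dualizable.

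First I would identify $\widecheck{S_E}$ with a smash pullback over $B$. Writing $r_B\colon B\to\star$ for the unique map, we have $\widecheck{S_B}=\star_{r_B}$ as a base change object, and since $r_B\circ p\colon E\to\star$ is the unique map out of $E$, the sectioned version of pseudofunctoriality (\autoref{thm:psfr}) gives
\[ \widecheck{S_E}\;=\;\star_{r_B p}\;\simeq\;\star_{r_B}\odot_B B_p\;=\;\widecheck{S_B}\odot_B B_p. \]
(Equivalently, this follows from the base change identity $M\odot_B B_\fn\simeq(\id\times\fn)^*M$ with $M=\widecheck{S_B}$ and $\fn=p$, since pulling the trivial ex-fibration $S^0\times B$ back along $p$ yields $S^0\times E$.)

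Next I would collect the dualizability of the two factors. The factor $\widecheck{S_B}$ is right $n$-dualizable by hypothesis. For the factor $B_p$, the hypothesis that $S_{\fib{p}{b}}$ is Costenoble-Waner dualizable for every $b\in B$ is exactly the hypothesis of \autoref{thm:Bp-dual}, which therefore tells us that $B_p$ is dualizable. Applying \autoref{thm:compose-duality} to the smash pullback $\widecheck{S_B}\odot_B B_p$ then shows it, hence $\widecheck{S_E}$, is right $n$-dualizable; that is, $S_E$ is Costenoble-Waner dualizable.

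The argument is really just bookkeeping, so there is no serious obstacle; the one place to be careful is the variance in the first step---one needs $B_p$, with the path-space base change on the $B$-side so that the bicategory slot matches \autoref{thm:Bp-dual}, rather than ${}_p B$---together with the observation that the fiberwise hypothesis of the corollary is literally the hypothesis required by \autoref{thm:Bp-dual}.
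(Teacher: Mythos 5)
Your proposal is correct and follows essentially the same route as the paper: it uses pseudofunctoriality of base change objects (via $r_Bp=r_E$) to identify $\widecheck{S_E}\simeq\widecheck{S_B}\odot_B B_p$, then combines the hypothesis on $\widecheck{S_B}$ with \autoref{thm:Bp-dual} and \autoref{thm:compose-duality}. The variance check and the alternative justification via $M\odot_B B_p\simeq(\id\times p)^*M$ are both fine and consistent with the paper's argument.
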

\begin{proof}
  Recall from \S\ref{sec:cw-duality} that $\widecheck{S_B}$ and $\widecheck{S_E}$ are the base change objects for the unique maps $r_B\colon B\to \star$ and $r_E\colon E\to \star$, respectively.
  Thus, by pseudofunctoriality of base change objects, the equality $r_B p = r_E$ determines an equivalence $\widecheck{S_E} \simeq \widecheck{S_B} \odot B_p$ over $E$.
  By \autoref{thm:Bp-dual}, $B_p$ is dualizable, and by assumption, so is $\widecheck{S_B}$; hence by \autoref{thm:compose-duality} so is $\widecheck{S_E}$.
\end{proof}

\section{Parametrized trace}
\label{sec:parametrized-trace}

If $M\to A\times B$ is $n$-dualizable and $f\colon M \to M$ is a fiberwise ex-map, we define its \textbf{trace} $\tr(f)$ to be the composite
\[\xymatrix{
  \sh{U_A \sm_{A\times A} S^n} \ar[d]^-{\eta} & & \sh{S^n \sm_{B\times B} U_B}
  \\
  \sh{M \odot_B \rdual{M}} \ar[r]^-{f \odot \id} &
  \sh{M \odot_B \rdual{M}} \ar[r]^{\simeq} &
  \sh{\rdual{M}\odot_A M} \ar[u]_-{\ep}
}\]
By the remarks in \S\ref{sec:fib} on shadows of units and smash products, we can regard $\tr(f)$ as a morphism
\[ S^n \sm (\Lambda A)_+ \to S^n \sm (\Lambda B)_+ \]
i.e.\ as a ``stable map'' $(\Lambda A)_+\to (\Lambda B)_+$.
In this paper, we will only care about the induced map on (reduced) $n^{\mathrm{th}}$ homology, which is equivalent to a map
\[ H_0(\Lambda A) \to H_0(\Lambda B) \]
on (unreduced) $0^{\mathrm{th}}$ homology.

\begin{eg}\label{thm:trid-bco}
  Let $B$ be any space and let $b\in B$, regarded as a map $b\colon \star \to B$.
  Then ${}_b B$ is right $n$-dualizable by \autoref{thm:bco-dual}, and so its identity map has a trace
  \[ \bbZ \cong H_0(\Lambda \star) \to H_0(\Lambda B) \cong \bbZ[\pi_0(\Lambda B)]. \]
  One can verify explicitly that this map picks out the component of the constant path at $b$.
  In \S\ref{sec:lefschetz} we will generalize this example to detect nontrivial paths in $B$.
\end{eg}

The following theorem, and its generalization in \autoref{thm:compose-traces2}, is the foundation on which our multiplicativity theorems rest.
Both are proven in~\cite{PS2} using the abstract context of bicategorical trace.

\begin{thm}[{\cite[7.5]{PS2}}]\label{thm:compose-traces1}
  Let $M\to A\times B$ and $N\to B\times C$ be $n$-dualizable, and let $f\colon M\to M$ and $g\colon N\to N$ be fiberwise ex-maps.
  Then the following triangle commutes up to stable homotopy.
  \[ \xymatrix{ S^n \sm (\Lambda A)_+ \ar[rr]^{\tr(f \odot g)} \ar[dr]_{\tr(f)} & &
    S^n \sm (\Lambda C)_+ \\
    & S^n \sm (\Lambda B)_+\ar[ur]_{\tr(g)}
    }\]
\end{thm}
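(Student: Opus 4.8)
The plan is to reproduce, in the concrete language of ex-fibrations, the proof that traces compose in a symmetric monoidal bicategory equipped with a compatible shadow. Fix duality data $(\rdual M,\eta_M,\ep_M)$ witnessing the $n$-dualizability of $M$ and $(\rdual N,\eta_N,\ep_N)$ witnessing that of $N$. By (the proof of) \autoref{thm:compose-duality}, $M\odot_B N$ is $n$-dualizable with dual $\rdual N\odot_B\rdual M$, and — this is the explicit input we need — its coevaluation and evaluation are the ``nested'' maps obtained by first applying $\eta_M$ and then inserting $\eta_N$ into the $B$-slot it opens, and dually first applying $\ep_M$ and then $\ep_N$:
\[ S^n\sm U_A \xto{\eta_M} M\odot_B\rdual M \too M\odot_B N\odot_C\rdual N\odot_B\rdual M, \]
\[ \rdual N\odot_B\rdual M\odot_A M\odot_B N \too \rdual N\odot_B N \xto{\ep_N} S^n\sm U_C, \]
where the unlabelled arrows are built from $\eta_N$ and $\ep_M$ and I suppress the unit and sphere coherence equivalences throughout.

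Substituting this data into the definition of $\tr(f\odot g)$ from \S\ref{sec:parametrized-trace} exhibits it as a long composite of maps between shadows of iterated smash pullbacks, and the body of the proof is then a diagram chase. Its three ingredients are: (i) functoriality and the interchange law for $\odot$, which rewrite $(f\odot g)\odot\id$ as a composite that applies $g$ and $f$ in separate stages; (ii) the triangle identities for $(\eta_M,\ep_M)$, so that the ``outer'' $M$/$\rdual M$ strands cancel once they have been slid past the ``inner'' $N$/$\rdual N$ strands; and (iii) the naturality and cyclicity of the shadow — the canonical equivalences $\sh{X\odot_B Y}\simeq\sh{Y\odot_A X}$ of \S\ref{sec:fib} — which is what ``closes up'' the two nested loops in the correct order. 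After rearranging, the sub-composite involving only $N$, $\rdual N$, $\eta_N$, $\ep_N$ and $g$ is precisely $\tr(g)$, precomposed with the sub-composite involving only $M$, $\rdual M$, $\eta_M$, $\ep_M$ and $f$, which is precisely $\tr(f)$, the two being joined through $\sh{S^n\sm U_B}\simeq S^n\sm(\Lambda B)_+$. The cleanest presentation is via the string-diagram calculus for bicategories with shadow, in which $\tr(h)$ is a bent wire closing an endomorphism and the assertion becomes the evident fact that two concentric closed loops ``unnest'' into a vertical composite; in the writeup I would either draw these diagrams or translate them into the corresponding finite list of explicit homotopies.

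The step I expect to be \emph{fussiest} is coherence bookkeeping, on two interacting fronts. First, all of the duality data lives only up to stable homotopy — the triangle identities, and the identification of the structure maps with transpositions, hold only after smashing with some $S^m$ — so each move of the chase must be made at the stable level, and one must check that a single suspension index suffices for all the (finitely many) moves simultaneously. Second and more subtly, the argument interleaves three distinct symmetries: the switch $M\odot_B\rdual M\simeq\rdual M\odot_A M$ built into the definition of the trace, the transpositions occurring in the triangle identities, and the cyclicity equivalence of shadows. One has to verify that the net permutation of strands is trivial, rather than a degree $-1$ self-map of a sphere which would corrupt the formula by a sign; this is exactly the coherence that is bundled into the axioms of a symmetric monoidal bicategory with a shadow, and in the parametrized world it must be confirmed directly from the explicit path-concatenation formulas for $\eta$, $\ep$ and the shadow equivalences recorded in \S\ref{sec:fib}. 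Everything else in the proof is formal.
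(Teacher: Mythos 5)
The paper itself gives no proof of this statement: it is quoted from~\cite[7.5]{PS2}, and its proof there is exactly the abstract bicategorical-trace argument you outline---composite duality data exhibiting $\rdual{N}\odot_B\rdual{M}$ as the dual of $M\odot_B N$, an interchange/naturality chase, the triangle identities, and the cyclicity of the shadow. So your sketch follows essentially the same route as the paper's (cited) proof, and the stable-coherence and sign bookkeeping you flag is precisely what the shadowed-bicategory framework of~\cite{PS2,PS3} packages, so there is no gap beyond the level of detail you already acknowledge.
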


The following corollary demonstrates how we can use this theorem to extract fiberwise information (though our intended applications are more complicated).

\begin{cor}\label{thm:fibtr-trivial}
  Suppose $E\to B$ is fiberwise dualizable and $f\colon E\to E$ is a fiberwise ex-map.
  Then the map induced on homology by the trace of $\widehat{f}$, a map
  \[ H_0(\Lambda(B)) \to \bbZ , \]
  sends a constant loop at $b\in B$ to the Lefschetz number of the induced map $f_b$ on the fiber over $b$.
\end{cor}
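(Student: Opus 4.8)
The plan is to read this off from the compositionality of parametrized traces, \autoref{thm:compose-traces1}, applied to the pair of ex-fibrations ${}_b B\to \star\times B$ and $\widehat{E_{+B}}\to B\times\star$, where $b\colon \star\to B$ is the inclusion of the chosen point. Take $M = {}_b B$ with endomorphism $\id_{{}_b B}$, and $N = \widehat{E_{+B}}$ with endomorphism $\widehat f$; then $A = C = \star$, so all three traces appearing in \autoref{thm:compose-traces1} are stable maps between $S^n\sm(\Lambda\star)_+\simeq S^n$ and $S^n\sm(\Lambda B)_+$, and passing to $H_0$ the commuting triangle becomes
\[ \bbZ\;\xto{\tr(\id_{{}_b B})}\;H_0(\Lambda B)\;\xto{\tr(\widehat f)}\;\bbZ, \]
whose composite is the $H_0$-map induced by $\tr\big(\id_{{}_b B}\odot\widehat f\big)$.

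First I would fix a common dimension $n$: by hypothesis $\widehat{E_{+B}}$ is right $n$-dualizable for some $n$, and ${}_b B$ is right $n$-dualizable by \autoref{thm:bco-dual} (for any $n$, after suspension). Next I would identify the composite object and map. By the base-change formula of \S\ref{sec:fib}, ${}_b B\odot_B\widehat{E_{+B}}\simeq (b\times\id)^*\widehat{E_{+B}}$, and the right-hand side is just the fiber $\fib{p}{b}$ equipped with a disjoint basepoint; it is $n$-dualizable by \autoref{thm:fib-dual}. Under this equivalence the endomorphism $\id_{{}_b B}\odot\widehat f$ is carried to $(f_b)_+$, the based map induced by $f$ on the fiber over $b$. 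Since the parametrized trace of \S\ref{sec:parametrized-trace} over $\star\times\star$ coincides with the classical trace of \S\ref{sec:n-duality-lefschetz}, and the latter is invariant under equivalences, $\tr\big(\id_{{}_b B}\odot\widehat f\big) = \tr\big((f_b)_+\big)$, whose degree is, by definition, the Lefschetz number $L(f_b)$; equivalently, on $H_0$ this composite is multiplication by $L(f_b)$, so it sends $1\mapsto L(f_b)$.

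It remains to evaluate $\tr(\id_{{}_b B})$ on $H_0$, which is precisely the content of \autoref{thm:trid-bco}: it sends $1\in\bbZ$ to the class $[c_b]$ of the constant loop at $b$ in $H_0(\Lambda B)$. Chasing $1$ around the triangle then gives $\tr(\widehat f)([c_b]) = L(f_b)$, which is the assertion. The main obstacle — the only genuinely nonformal part — is the middle step: verifying that the base-change equivalence ${}_b B\odot_B\widehat{E_{+B}}\simeq\big(\fib{p}{b}\big)_+$ intertwines $\id_{{}_b B}\odot\widehat f$ with $(f_b)_+$, and that the trace is unchanged when a dualizable object together with an endomorphism is transported along an equivalence. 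Both follow by unwinding the definitions and the invariance of duality data under equivalence (see \cite{dp:duality}); everything else is a direct appeal to \autoref{thm:compose-traces1}, \autoref{thm:bco-dual}, \autoref{thm:fib-dual}, and \autoref{thm:trid-bco}.
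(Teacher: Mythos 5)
Your proof is correct and follows essentially the same route as the paper: it applies \autoref{thm:compose-traces1} to ${}_b B$ and $\widehat{E_{+B}}$, uses \autoref{thm:trid-bco} to identify $\tr(\id_{{}_b B})$ with the constant loop at $b$, and identifies ${}_b B\odot\widehat{E_{+B}}$ with the fiber $\fib{p}{b}$ carrying $f_b$, so the composite trace is $L(f_b)$. The extra care you take with the base-change equivalence and invariance of trace under equivalence is simply a more detailed unwinding of the paper's final sentence.
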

\begin{proof}
  By \autoref{thm:trid-bco}, the ex-fibration ${}_b B \to \star\times B$ is dualizable, and the trace of its identity map $\id_{{}_b B}$ is a map $\bbZ\to H_0(\Lambda B)$ which picks out the constant loop at $b$.
  Thus, the image of the constant loop at $b$ under $\tr (\fhat)$ is the image of 1 under the composite
  \[ \bbZ \xto{\tr(\id_{{}_b B})}H_0(\Lambda B) \xto{\tr(\fhat)} \bbZ. \]
  But by \autoref{thm:compose-traces1}, this is also the trace of the endomorphism $\id \odot \fhat$ of ${}_b B \odot \widehat{E_{+B}}$.
  This is a classically $n$-dualizable based space, and when we identify it with the fiber $\fib{p}{b}$, the endomorphism $\id \odot \fhat$ is identified with $f_b$.
\end{proof}

For our applications, we require a more general ``twisted'' notion of trace.
Thus, suppose as before that $M\to A\times B$ is $n$-dualizable, but suppose also that we have ex-fibrations $Q\to A\times A$ and $P\to B\times B$, and a fiberwise ex-map $f\colon Q\odot_A M \to M\odot_B P$ (over $A\times B$).
We define the \textbf{trace} of $f$ to be the composite
\[\xymatrix{
  \sh{Q \sm_{A\times A} S^n} \ar[d]^-{\eta} & & \sh{S^n \sm_{B\times B} P}
  \\
  \sh{Q \odot_A M \odot_B \rdual{M}} \ar[r]^-{f \odot \id} &
  \sh{M \odot_B P \odot_B \rdual{M}} \ar[r]^{\simeq} &
  \sh{\rdual{M}\odot_A M \odot_B P} \ar[u]_-{\ep}
}\]
Note that if $Q = U_A$ and $P=U_B$, this reduces to the previous definition of trace.
Generalizing the corresponding observation in that case, we can regard the trace as a morphism
\[ S^n \sm \sh{Q} \to S^n \sm \sh{P} \]
and we are usually interested in the induced map on homology
\[ H_0(\sh{Q}) \to H_0 (\sh{P}). \]
The generalized version of \autoref{thm:compose-traces1} is

\begin{thm}[{\cite[7.5]{PS2}}]\label{thm:compose-traces2}
  Let $M\to A\times B$ and $N\to B\times C$ be $n$-dualizable, let $Q\to A\times A$, $P\to B\times B$, and $R\to C\times C$ be ex-fibrations, and let $f\colon Q\odot_A M\to M\odot_B P$ and $g\colon P\odot_B N\to N\odot_C R$ be fiberwise ex-maps.
  Then the following triangle commutes up to stable homotopy.
  \[ \xymatrix{ S^n \sm \sh{Q} \ar[rr]^{\tr((\id_M \odot g)\circ (f\odot\id_N))} \ar[dr]_{\tr(f)} & &
    S^n \sm \sh{R} \\
    & S^n \sm \sh{P} \ar[ur]_{\tr(g)}
    }\]
\end{thm}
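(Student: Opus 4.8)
The plan is to derive the commutativity of the triangle by a direct diagrammatic computation, using four things: the triangle (zig-zag) identities that witness the $n$-dualizability of $M$ and of $N$; the functoriality of the shadow together with its cyclicity isomorphism $\sh{X\odot Y}\simeq\sh{Y\odot X}$; the naturality of all of these with respect to $\odot$; and the coherent associativity and unitality of $\odot$. This is nothing but the identity ``the trace of a composite is the composite of the traces'' that holds in any framed bicategory equipped with a shadow, which is how \cite[7.5]{PS2} establishes it. Staying inside the explicit framework of this paper, I would replay that computation concretely as follows.

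First I would assemble the duality data for $M\odot_B N$. By \autoref{thm:compose-duality} this ex-fibration over $A\times C$ is $n$-dualizable, with dual $\rdual{N}\odot_B\rdual{M}$ over $C\times A$; its coevaluation is obtained by inserting an $\eta_N$ into the middle of an $\eta_M$ (after the relevant sphere shift), and its evaluation contracts first along $\ep_M$, on the inner $\rdual{M}\odot_A M$, and then along $\ep_N$, on the resulting $\rdual{N}\odot_B N$. With this in hand I would expand the trace of $h := (\id_M\odot g)\circ(f\odot\id_N)$ straight from the definition of the twisted trace, obtaining an explicit composite of shadows of $\odot$-products in which $\eta_M$, $\ep_M$, $\eta_N$, $\ep_N$, $f$, $g$ and one cyclic reordering each occur exactly once (note that $h\odot\id$ already splits into an ``$f$-step'' and a ``$g$-step'' acting on disjoint $\odot$-factors).

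The key step is then to recognize, inside this long composite, a subcomposite that is one of the triangle identities for the duality of $N$ --- roughly, an $\eta_N$ followed, after $g$ has acted, by an $\ep_N$ --- and to contract it. The subtlety is that $g\colon P\odot_B N\to N\odot_C R$ is interposed between $\eta_N$ and $\ep_N$, so the contraction does not eliminate $g$. Rather, after commuting past one another the maps that act on disjoint $\odot$-factors and invoking the cyclicity isomorphism of the shadow to move factors into standard position, the whole composite reorganizes as $\tr(g)$ --- computed with the duality of $N$ and coefficient objects $P$, $R$ --- postcomposed with $\tr(f)$ --- computed with the duality of $M$ and coefficients $Q$, $P$ --- which is exactly the claimed commutativity. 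Performing the symmetric contraction of the $\eta_M$--$\ep_M$ pair instead should give the same factorization read from the other side, a useful consistency check.

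I expect the main obstacle to be bookkeeping rather than any conceptual difficulty: keeping straight the many $\odot$-factors, the positions of the sphere coordinates and the transposition maps, and the cyclic permutations performed by the shadow, all while every map is only defined up to fiberwise homotopy equivalence after smashing with some $S^m$. This is precisely the coherence that the axioms of a (framed) bicategory with a shadow are designed to encapsulate, which is why the cleanest route is to verify those axioms for parametrized spectra once and for all --- work already carried out in \cite{maysig:pht,shulman:frbi} --- and then invoke the general statement of \cite{PS2}.
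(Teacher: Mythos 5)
Correct, and essentially the paper's own route: the paper gives no argument for this theorem, citing \cite[7.5]{PS2}, and your sketch --- expand $\tr((\id_M\odot g)\circ(f\odot\id_N))$ using the composite duality data of \autoref{thm:compose-duality} and reorganize by naturality of $\odot$ (maps acting on disjoint factors commute) together with cyclicity of the shadow --- is precisely the bicategorical computation carried out there, and your suggestion to verify the framed-bicategory-with-shadow axioms for parametrized spectra once and for all and then quote \cite{PS2} is exactly what the paper does. The only caveat is that the triangle identities do not drive the final reorganization (as you yourself note, $g$ sits between the coevaluation and evaluation of $N$, so no snake can actually be contracted); they are needed only to know that the composite coevaluation and evaluation of \autoref{thm:compose-duality} constitute genuine duality data, so that the trace computed from them is well defined and independent of that choice.
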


The most important application of twisted traces is when the twisting object(s) are base change objects.
Specifically, for any map $\fn\colon B\to B$ we have an equivalence $S_B \simeq \fn^* S_B$, which we can regard as a fiberwise ex-map
\[\widecheck{\fn}\colon \widecheck{S_B}\to \widecheck{S_B}\odot_B B_{\fn}.\]
If $S_B$ is Costenoble-Waner dualizable, then $\widecheck{\fn}$ has a trace
\[ \bbZ \to H_0(\Lambda^{\fn} B) \cong \bbZ\,\sh{\pi_1 B_{\fn}}. \]

We now identify this trace.
Suppose $B$ is a closed smooth manifold and $\fn\colon B\to B$ a continuous map with isolated fixed points.
Recall from \S\ref{sec:introduction} that two fixed points $b_1$ and $b_2$ of \fn\ are said to be in the same \emph{fixed-point class} if there is a path $\gamma$ from $b_1$ to $b_2$ which is homotopic to $\fn(\gamma)$ rel endpoints.
The set of fixed-point classes injects into the set $\pi_0(\Lambda^{\fn} B) \cong \sh{\pi_1 B_{\fn}}$ of equivalence classes of pairs $(b,\, b\leadsto \fn(b))$, and the \textbf{Reidemeister trace} of \fn\ is the formal sum
\[ R(\fn) = \sum_{C\in \;\sh{\pi_1 B_{\fn}}} \mathrm{ind}_C(\fn) \cdot C
\qquad \in \bbZ\,\sh{\pi_1 B_{\fn}} \cong H_0(\Lambda^{\fn} B)
\]
where the coefficient of each fixed-point class $C$ is the sum of the indices of all the fixed points in that class.
We can now state:

\begin{thm}[\cite{kate:higher}]\label{thm:reidemeister_id}
  Suppose that $S_B$ is Costenoble-Waner dualizable, and let $\fn\colon B\to B$ be any continuous map.
  Then the trace of the induced map $\widecheck{S_B} \to \widecheck{S_B}\odot_B B_{\fn}$ induces a map on homology:
  \[ \bbZ \cong H_0(\Lambda \star) \to H_0(\Lambda^{\fn}B) \]
  which picks out the Reidemeister trace $R(\fn)$.
\end{thm}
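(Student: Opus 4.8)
The plan is to unwind this abstract trace into a Pontryagin--Thom collapse map and then recognize that collapse as the classical fixed-point index. Since every piece of data entering the trace is homotopy invariant in $\fn$, and the Reidemeister trace is likewise a homotopy invariant, I would begin by replacing $\fn$ with a homotopic smooth map whose fixed points are all nondegenerate --- hence isolated, finitely many, and each contained in a Euclidean chart on which $\fn$ takes its values. We may as well also take $B$ to be a closed smooth manifold, for which \autoref{thm:mfd-cwdual} supplies the Costenoble--Waner dualizability of $S_B$ and for which $R(\fn)$ has its classical description in terms of the fixed-point index; for a more general $B$ satisfying the hypothesis one simply takes the trace itself as the definition of $R(\fn)$, so there is nothing to prove.

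Next I would make the duality data explicit. Fixing an embedding $B\hookrightarrow\mathbb{R}^n$ with normal bundle $\nu$, the construction behind \autoref{thm:mfd-cwdual} (see \cite[Ch.~18]{maysig:pht}) identifies $\rdual{\widecheck{S_B}}$ with the fiberwise Thom spectrum of $\nu$, suitably re-indexed over $\star\times B$, with coevaluation $\eta$ the tubular-neighbourhood collapse $S^n\to\mathrm{Thom}(\nu)$ and evaluation $\ep$ the diagonal/zero-section map. Substituting these into the definition of the twisted trace of $\widecheck{\fn}\colon\widecheck{S_B}\to\widecheck{S_B}\odot_B B_\fn$, passing to shadows and smashing with a large sphere $S^m$, the composite becomes a stable map $S^{n+m}\to S^m\sm(\Lambda^\fn B)_+$ that is supported near the locus of pairs $(b,c_b)$ with $\fn(b)=b$. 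Concretely it collapses the complement of small disjoint disks $D_b$ about the fixed points, and on each $D_b$ it is the usual local-index map $D^n/\partial D^n\cong S^n\to S^n$ associated to $x\mapsto x-\fn(x)$, landing in the wedge summand of $S^m\sm(\Lambda^\fn B)_+$ indexed by the component of $\Lambda^\fn B$ containing the constant loop $c_b$.

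The rest is bookkeeping. The degree of the local map at $b$ is by definition $\mathrm{ind}_b(\fn)$, so summing over fixed points --- using that the collapse onto $\bigsqcup_b D_b$ factors through the corresponding wedge --- yields, on $H_0$, the element $\sum_b\mathrm{ind}_b(\fn)\cdot[c_b]$. Grouping the fixed points by fixed-point class turns this into $\sum_C\mathrm{ind}_C(\fn)\cdot C=R(\fn)$, as required. The degenerate case $\fn=\id_B$ can also be checked directly, where it matches the Euler-characteristic computation underlying \autoref{thm:trid-bco}: the trace of $\id_{\widecheck{S_B}}$ is $\chi(B)$ supported on the constant-loop component of $\Lambda B$.

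I expect the main obstacle to be the second step: exhibiting honest Costenoble--Waner duality data for $\widecheck{S_B}$ and proving that, after the formal coherence isomorphisms and the stabilizing smash with $S^m$, the abstract trace composite is genuinely stably homotopic to the Pontryagin--Thom collapse. This is the single piece of concrete topology in the argument --- the ``glue'' connecting the bicategorical formalism to the classical index --- and it is exactly what is carried out in \cite{kate:higher}, building on \cite{maysig:pht}; everything downstream of it is a localization-and-additivity argument plus careful tracking of the twisted-loop-space labels through the duality equivalences. One could instead try to characterize the trace by the additivity and normalization axioms that pin down the fixed-point index, but making bicategorical additivity precise is itself substantial work, so the direct computation seems preferable.
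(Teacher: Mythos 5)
The paper contains no proof of \autoref{thm:reidemeister_id} for you to be compared against: it is one of the two results (together with \autoref{thm:mfd-cwdual}) that the introduction explicitly flags as non-formal topological input, imported from \cite{kate:higher}. So the only fair question is whether your sketch would stand on its own, and by your own admission it would not. The decisive step --- producing explicit Costenoble--Waner duality data for $\widecheck{S_B}$ from an embedding $B\hookrightarrow\mathbb{R}^n$, and showing that the shadowed trace composite for $\widecheck{\fn}\colon \widecheck{S_B}\to\widecheck{S_B}\odot_B B_{\fn}$, after the coherence equivalences and stabilization, is stably homotopic to a Pontryagin--Thom collapse supported near the fixed points with the $\Lambda^{\fn}B$-component labels tracked correctly --- is precisely the concrete content of \cite{kate:higher} (building on \cite[Ch.~18]{maysig:pht}), and you defer it there rather than carry it out. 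Granting that identification, your remaining bookkeeping (replace $\fn$ by a homotopic map with nondegenerate fixed points, compute the local degree of $x\mapsto x-\fn(x)$ on a small disk about each fixed point, observe that each fixed point $b$ contributes in the wedge summand of the component of $\Lambda^{\fn}B$ containing $c_b$, then group by fixed-point class) is the standard and correct argument shape, and matches the kind of index-theoretic comparison actually done in the reference. So the proposal is a reasonable roadmap, but its core is the same citation the paper itself leans on, not an independent proof.

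One further caution: your remark that for general $B$ satisfying the hypothesis one may ``take the trace itself as the definition of $R(\fn)$, so there is nothing to prove'' is circular as a reading of the theorem. The content of \autoref{thm:reidemeister_id} is exactly the comparison of the bicategorical trace with an independently defined invariant --- the geometric sum $\sum_C \mathrm{ind}_C(\fn)\cdot C$ where that makes sense, or equivalently the algebraic (Hattori--Stallings type) description alluded to in \autoref{thm:defns-of-L}. One must therefore either restrict to the class of spaces (closed manifolds, or more generally compact ENRs) where the classical invariant is defined, or prove agreement with the algebraic definition in the remaining generality; declaring the trace to be the definition empties the statement of content rather than proving it.
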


\begin{rmk}
  There is a difference between the Lefschetz number and the Reidemeister trace that is worth remarking on.
  Because most chain complexes that are used to compute homology consist of free abelian groups, the K\"{u}nneth theorem implies that homology takes cartesian products to tensor products.
  This means that it preserves the abstract construction of trace in \S\ref{sec:n-duality-lefschetz}, so that the Lefschetz number can be computed as a trace at the level of homology, as mentioned in \autoref{thm:defns-of-L}.
  
  By contrast, because the Reidemeister trace is a twisted trace, in its computation we have to consider modules over a ring such as $\bbZ[\pi_1(B)]$, and the chain complexes used to compute homology are not usually projective as $\bbZ[\pi_1(B)]$-modules.
  Thus, the K\"{u}nneth theorem fails, and so the Reidemeister trace cannot in general be computed as a trace at the level of homology (though it can be computed at the level of cellular chains).
  
  However, this difference is irrelevant for us, because we always construct traces at the level of topology, only applying homology afterwards to extract the numerical invariant.
\end{rmk}

There is one other general fact about traces which we will need in the following sections.
Suppose that
\[\vcenter{\xymatrix{
    A\ar[r]^{\fn}\ar[d]_{f_1} \ar@{}[dr]|{\alpha} &
    B\ar[d]^{f_2}\\
    A\ar[r]_{\fn} &
    B
  }}\]
is a homotopy commutative square, with specified homotopy $\alpha\colon \fn f_1\sim f_2 \fn$.
Then ${}_{\fn} B$ is right $n$-dualizable by \autoref{thm:bco-dual}, and by \autoref{thm:hopb} we have an induced map
\[ h\colon A_{f_1} \odot {}_{\fn} B \too {}_{\fn}B \odot B_{f_2}. \]
which therefore has a trace $\tr(h)\colon \sh{A_{f_1}} \to \sh{B_{f_2}}$.

\begin{thm}\label{thm:square-trace}
  The trace of this $h$ is homotopic to the map
  \[ \Lambda^{f_1} A_+ \to \Lambda^{f_2} B_+ \]
  which sends a path $\gamma\colon a \leadsto f_1(a)$ in $A$ to the path $\fn(\gamma) \cdot \alpha(a)$.
\end{thm}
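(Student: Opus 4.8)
The plan is to unwind the definition of $\tr(h)$ and the explicit description of each map that enters into it, then simplify the resulting formula by a sequence of path homotopies. The trace $\tr(h)$ is, by the definition of twisted trace in \S\ref{sec:parametrized-trace}, the composite around the boundary of the pentagon: apply the coevaluation $\eta$ for the duality $({}_\fn B,\, B_\fn)$, then $h \odot \id$, then the shadow-swap equivalence $\sh{M\odot P \odot \rdual M} \simeq \sh{\rdual M \odot M \odot P}$, then the evaluation $\ep$. Here $M = {}_\fn B$ (so $\rdual M = B_\fn$), $Q = A_{f_1}$, and $P = B_{f_2}$. The key point is that every one of these maps has an explicit point-set description available to us: $\eta$ and $\ep$ are described just after \autoref{thm:bco-dual} (with $n=0$, so the spheres disappear); $h$ is the map of \autoref{thm:hopb} specialized via~\eqref{eq:hopb-mapdefn} to the square in the statement; the shadow-swap is the evident reindexing; and the shadows $\sh{A_{f_1}} \simeq (\Lambda^{f_1}A)_+$, $\sh{B_{f_2}} \simeq (\Lambda^{f_2}B)_+$ are described in \S\ref{sec:fib}.

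The key steps, in order, are as follows. First I would fix a point $\gamma \in \Lambda^{f_1}A$, i.e.\ $\gamma\colon a \leadsto f_1(a)$, sitting in $\sh{A_{f_1}}$, and chase it through. Applying $\eta\colon U_A \to {}_\fn B \odot B_\fn$ after $\gamma$ produces (a point represented by) the pair $\bigl(c_{\fn(a)},\, \fn(\gamma)\bigr)$ — a constant path at $\fn(a)$ landing in $B$, paired with $\fn(\gamma)\colon \fn(a) \leadsto \fn f_1(a)$ — together with $\gamma$ itself as the element of $A_{f_1}$; so in $\sh{A_{f_1}\odot {}_\fn B \odot B_\fn}$ we have the data $(\gamma, c_{\fn(a)}, \fn(\gamma))$. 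Second, apply $h \odot \id$. By~\eqref{eq:hopb-mapdefn}, $h$ sends the $A_{f_1}\odot {}_\fn B$ part $(\gamma, c_{\fn(a)})$ — a pair $(\gamma, \beta)$ with $\gamma\colon a\leadsto f_1(a)$ and $\beta = c_{\fn(a)}\colon \fn(a) = \fn(a)$, over the intermediate point $a\in A$ — to the ${}_\fn B \odot B_{f_2}$ data $\bigl(\fn(\gamma)\cdot\alpha(a),\ f_2(c_{\fn(a)})\bigr) = \bigl(\fn(\gamma)\cdot \alpha(a),\ c_{f_2\fn(a)}\bigr)$, landing over the intermediate point $f_1(a)\in A$, wait — I need to be careful which copy of $B$ and which map; I'll have to match the roles of $\fn_1,\dots,\fn_4$ in \autoref{thm:hopb} to the present square ($\fn_1 = \fn_2 = \fn$, $\fn_3 = f_2$, $\fn_4 = f_1$... or the transpose — the correct assignment is determined by the requirement that the domain be $A_{f_1}\odot{}_\fn B$ and the codomain ${}_\fn B\odot B_{f_2}$) and then read off~\eqref{eq:hopb-mapdefn} accordingly. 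After this step we have data in $\sh{{}_\fn B \odot B_{f_2} \odot B_\fn}$. Third, apply the shadow-swap to get data in $\sh{B_\fn \odot {}_\fn B \odot B_{f_2}}$, which reorders the three paths. Fourth, apply $\ep\colon B_\fn \odot {}_\fn B \to U_B$, which concatenates the first two paths; the remaining factor lands in $\sh{B_{f_2}} \simeq (\Lambda^{f_2}B)_+$. Composing all the concatenations and cancelling constant paths, I expect the output to be the path $\fn(\gamma)\cdot\alpha(a)$, viewed as a point of $\Lambda^{f_2}B$ (a loop from $f_2$'s perspective: it runs $\fn(a) \leadsto \fn f_1(a) \leadsto f_2\fn(a)$, and indeed $\fn f_1(a) = f_2\fn(a)$ up to the homotopy $\alpha$, so $\alpha(a)$ is exactly the correction path that makes the concatenation a twisted loop). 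I would then note that the whole chase is natural/continuous in $\gamma$, so it assembles to the claimed map of spaces, and that basepoints "come along for the ride" as remarked in \S\ref{sec:fib}.

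The main obstacle I anticipate is bookkeeping rather than anything conceptual: getting the variances and the left/right conventions exactly right, so that the three path-concatenations compose in the correct order and the constant paths cancel on the correct side. In particular, matching the square in the statement to the hypotheses of \autoref{thm:hopb} requires transposing some factors (the statement's square has $\fn$ on the horizontal edges and $f_1,f_2$ on the vertical, whereas \autoref{thm:hopb} is stated with $\fn_1,\fn_2$ on top-and-left), and a sign error there would flip $\alpha$ to its reverse. I would resolve this by carrying an explicit intermediate point through each $\odot$ (the "$b$" or "$a$" over which the pullback is taken) and checking at each stage that source and target of every path match up; once the diagram chase is set up with that discipline, the simplification to $\fn(\gamma)\cdot\alpha(a)$ is forced. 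A secondary, minor point is to confirm that the two a-priori-different formulas for $\eta$ mentioned after \autoref{thm:bco-dual} (namely $(\fn(\gamma), c_{\fn(a')})$ versus $(c_{\fn(a)}, \fn(\gamma))$) give homotopic results here; since they are stably homotopic and we only care about $H_0$, either choice is fine, and I would just pick whichever makes the cancellation cleanest.
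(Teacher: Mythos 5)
Your overall strategy is exactly the paper's: write $\tr(h)$ as the composite $\sh{A_{f_1}} \to \sh{A_{f_1}\odot {}_\fn B\odot B_\fn} \to \sh{{}_\fn B\odot B_{f_2}\odot B_\fn} \simeq \sh{B_\fn\odot {}_\fn B\odot B_{f_2}} \to \sh{B_{f_2}}$ and chase a point $\gamma\colon a\leadsto f_1(a)$ through the explicit formulas for $\eta$, for $h$ via~\eqref{eq:hopb-mapdefn}, and for $\ep$. Your matching of the square to \autoref{thm:hopb} (forced, as you say, by requiring domain $A_{f_1}\odot{}_\fn B$ and codomain ${}_\fn B\odot B_{f_2}$, i.e.\ $\fn_1=\fn_4=\fn$, $\fn_2=f_1$, $\fn_3=f_2$) and the resulting formula $h(\gamma,c_{\fn(a)})=(\fn(\gamma)\cdot\alpha(a),\,c_{f_2\fn(a)})$ are correct, as are the swap and evaluation steps in the abstract.

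However, the first step of your chase is wrong, and with your intermediate data the later steps would not actually go through. The map labelled $\eta$ in the trace is the unitor $A_{f_1}\simeq A_{f_1}\odot U_A$ followed by $\id\odot\eta$, so the coevaluation is applied to the constant path $c_a$ inserted by the unitor, not to $\gamma$ itself; the image of $\gamma$ is therefore the triple $(\gamma,\,c_{\fn(a)},\,c_{\fn(a)})$, not $(\gamma,\,c_{\fn(a)},\,\fn(\gamma))$. Your triple does not even lie in the shadow: the $B_\fn$-component must end at $\fn$ applied to the point of $A$ at which the $A_{f_1}$-component starts (namely $a$), whereas $\fn(\gamma)$ ends at $\fn f_1(a)$. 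Concretely, after $h\odot\id$ your data becomes $(\fn(\gamma)\cdot\alpha(a),\,c_{f_2\fn(a)},\,\fn(\gamma))$, and the evaluation $\ep\colon B_\fn\odot{}_\fn B\to U_B$ would then have to concatenate $\fn(\gamma)$ (ending at $\fn f_1(a)$) with $\fn(\gamma)\cdot\alpha(a)$ (starting at $\fn(a)$), which do not compose; so the promised ``compose everything and cancel constants'' step fails for your data, and the fact that you still name the right answer is not justified by the computation you set up. With the corrected first step the chase is forced and is exactly the paper's proof: $h\odot\id$ yields $(\fn(\gamma)\cdot\alpha(a),\,c_{f_2\fn(a)},\,c_{\fn(a)})$, and the cyclic swap followed by $\ep$ concatenates all paths to give $\fn(\gamma)\cdot\alpha(a)$ in $\Lambda^{f_2}B$. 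This is precisely the sort of source/target bookkeeping you flagged as the main risk; carrying the intermediate basepoints through each $\odot$, as you propose, would have caught it at the coevaluation stage.
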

\begin{proof}
  By definition, $\tr(h)$ is the composite
  \[ \sh{A_{f_1}} \xto{\eta}
  \sh{A_{f_1} \odot {}_{\fn} B \odot B_{\fn}} \xto{h}
  \sh{{}_{\fn} B \odot B_{f_2} \odot B_{\fn}} \xto{\simeq}
  \sh{B_{f_2} \odot B_{\fn} \odot {}_{\fn} B} \xto{\ep}
  \sh{B_{f_2}}
  \]
  By the remarks after \autoref{thm:bco-dual}, a path $\gamma\colon a\leadsto  f_1(a)$ in $\sh{A_{f_1}} \simeq \Lambda^{f_1} A_+$ goes via $\eta$ to the triple 
  $(\gamma, c_{\fn(a)}, c_{\fn(a)})$.  
  By the description of $h$ in the proof of \autoref{thm:hopb}, this triple then goes to $(\fn(\gamma)\cdot \alpha(a), c_{f_2(\fn(a))}, c_{\fn(a)})$.
  Since the remaining maps simply compose up all the paths, we obtain $\fn(\gamma) \cdot \alpha(a)$ as desired.
\end{proof}

\section{Lefschetz number}
\label{sec:lefschetz}

We can now prove \autoref{thm:lefschetz-intro} from the introduction.
For all of this section, we make the standing hypotheses that $p\colon E\to B$ is a fibration such that $S_B$ is Costenoble-Waner dualizable and each fiber $\fib{p}{b}$ is classically $n$-dualizable.
By the results cited in \S\ref{sec:n-duality-lefschetz} and \S\ref{sec:cw-duality}, it is sufficient for $B$ and each $\fib{p}{b}$ to be a closed smooth manifold.
Note that these hypotheses imply that $E\to B$ is fiberwise dualizable (by \autoref{thm:fib-dual}) and that $E$ is classically $n$-dualizable (by \autoref{thm:total-dual}).
We also suppose, as in the introduction, that $f\colon E\to E$ is a fiberwise map over $\fbar\colon B\to B$.

We will actually show a topological result that is slightly stronger than \autoref{thm:lefschetz-intro}.
In the previous section, we showed that there is a fiberwise ex-map
\[  \hbar\colon \widecheck{S_B} \to \widecheck{S_B}\odot_B B_{\fbar} \]
whose trace $\tr(\hbar)\colon S^n \to S^n \sm \Lambda^{\fbar} B_+$ induces the Reidemeister trace $R(\fbar)$ on homology.
We will show that there is a map $h$ with a trace $\tr(h)\colon S^n \sm \Lambda^{\fbar} B_+ \to S^n$ that induces the refined fiberwise Lefschetz number $\widehat{L_B}(f)$ on homology, and such that the composite
\[ S^n \xto{\tr(\hbar)} S^n \sm \Lambda^{\fbar} B_+ \xto{\tr(h)} S^n \]
has degree $L(f)$.
\autoref{thm:lefschetz-intro} then follows by the functoriality of homology.

In fact, by the adjunction between $\fbar_!$ and $\fbar^*$, the ex-map $f$ over \fbar\ can be regarded as a fiberwise ex-map $\fbar_! E_{+B} \to E_{+B}$, hence as a map
\[h\colon B_{\fbar}\odot \widehat{E_{+B}}\rightarrow \widehat{E_{+B}}.\]
A point of $B_{\fbar}\odot \widehat{E_{+B}}$ over $b\in B$ is a pair $(\gamma, e)$ where $\gamma\colon b \leadsto \fbar(p(e)) = p(f(e))$ is a path in $B$.
The map $h$ acts on such a point by transporting $f(e)$ along $\gamma$.

Now since each fiber of $p\colon E\to B$ is 
classically $n$-dualizable, by \autoref{thm:fib-dual} $\widehat{E_{+B}}$ is $n$-dualizable.
Thus, $h$ has a trace, which is a map
\[ \tr(h) \colon S^n \sm \Lambda^{\fbar} B_+ \to S^n. \]
We will show that $\tr(h)$ induces $\widehat{L_B}(f)$ on homology, and then apply \autoref{thm:compose-traces2} to identify $\tr(h) \circ \tr(\hbar)$ with $\tr(f)$.

We start with the main ingredient for this application of \autoref{thm:compose-traces2}.

\begin{lem} \label{thm:hhcomp}
The map $f\colon E_+\rightarrow E_+$ is (homotopic to) the composite
\[\xymatrix{
\widecheck{S_B}\odot\widehat{E_{+B}}\ar[r]^-{\overline{h}\odot \id}&
(\widecheck{S_B}\odot B_{\overline{f}})\odot\widehat{E_{+B}}\cong 
\widecheck{S_B}\odot (B_{\overline{f}}\odot\widehat{E_{+B}})\ar[r]^-{\id\odot h}&
\widecheck{S_B}\odot{\widehat{E_{+B}}}.
}\]
\end{lem}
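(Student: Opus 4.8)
The plan is to unwind both sides as explicit maps on points, using the concrete descriptions of the smash pullbacks, the identification $\widecheck{S_B} \odot \widehat{E_{+B}} \simeq E_+$ from \autoref{thm:sphere-bco}, and the point-level descriptions of $\hbar$ and $h$ recorded just above the statement. The equivalence $\widecheck{S_B} \odot \widehat{E_{+B}} \simeq r_!(E_{+B}) \simeq E_+$ realizes a point of $E_+$ as (up to the usual replacement-by-a-fibration) a point $e \in E$ together with a constant path at $p(e)$; tracking $e$ through the composite is then just bookkeeping in path concatenation.

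First I would spell out a general point of the source: over the point $\star$, an element of $\widecheck{S_B} \odot \widehat{E_{+B}}$ is (a path-class representative of) a pair $(c_{p(e)}, e)$ with $e \in E$, since $\widecheck{S_B}$ is the base change object $\star_r$ whose fiber over $b$ is paths $\star \leadsto \star$ — i.e.\ a point of $B$ together with the constant datum — concatenated over $B$ with $e \in E$ lying over that point. Next I would apply $\hbar \odot \id$: by the description of $\widecheck{\fbar} = \hbar$ as the map $\widecheck{S_B} \to \widecheck{S_B} \odot_B B_{\fbar}$ coming from the equivalence $S_B \simeq \fbar^* S_B$, this inserts the datum $c_{\fbar(p(e))}$, sending our point to (a representative of) $(c_{p(e)}$ paired over $B$ with $\gamma = c_{\fbar(p(e))}$, together with $e)$ in $\widecheck{S_B} \odot (B_{\fbar} \odot \widehat{E_{+B}})$ after reassociating. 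Then I would apply $\id \odot h$: by the stated action of $h$ on $B_{\fbar} \odot \widehat{E_{+B}}$ — on a pair $(\gamma, e)$ with $\gamma \colon b \leadsto \fbar(p(e)) = p(f(e))$, it transports $f(e)$ along $\gamma$ — with $\gamma = c_{\fbar(p(e))} = c_{p(f(e))}$ the transport is trivial, so the result is $f(e)$ sitting over $\fbar(p(e))$, paired with the constant path $c_{\fbar(p(e))}$ in $\widecheck{S_B}$. Under the identification $\widecheck{S_B} \odot \widehat{E_{+B}} \simeq E_+$ this is exactly the point $f(e)$, so the composite sends $e \mapsto f(e)$, which is $f \colon E_+ \to E_+$.

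The slightly delicate point — and the one I expect to be the main obstacle — is the bookkeeping around the two replacement-by-a-fibration steps hidden in $\odot$ and in $B_{\fbar}$, and the precise homotopies witnessing $S_B \simeq \fbar^* S_B$ and the associativity isomorphism. Strictly, a point of $\widecheck{S_B} \odot \widehat{E_{+B}}$ involves a Moore-path correction factor, and the claimed equalities of paths ($\gamma = c_{\fbar(p(e))}$ etc.) only hold up to specified homotopy; so the argument gives a homotopy $f \simeq (\id \odot h)\circ(\hbar \odot \id)$ rather than a strict equality, which is all that is claimed. I would phrase the verification so that at each stage one checks the map on points up to the relevant fiberwise homotopy, invoking that $\hbar$ was defined precisely as the adjunct/mate of the identity under $S_B \simeq \fbar^* S_B$ and that $h$ was defined as the adjunct of $f$ under the $\fbar_! \dashv \fbar^*$ adjunction, so that the composite is the adjunct of the identity-then-$f$, i.e.\ $f$ itself. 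Packaging it adjunction-theoretically rather than purely point-set may be the cleanest way to dispatch the coherence issues, but a direct point-chase as above, with ``up to homotopy'' inserted at each arrow, suffices.
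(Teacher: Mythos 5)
Your proposal is correct and takes essentially the same route as the paper: a direct point-level chase identifying a point of $\widecheck{S_B}\odot\widehat{E_{+B}}$ with a point $e\in E$, computing that $\hbar\odot\id$ sends it to $(\fbar(p(e)),c_{\fbar(p(e))},e)$, and observing that $\id\odot h$ then transports $f(e)$ along a constant path, which is trivial up to homotopy, yielding $f(e)$. The only quibble is notational: after applying $\hbar\odot\id$ the $\widecheck{S_B}$-component sits over $\fbar(p(e))$ rather than $p(e)$, but this does not affect your argument, and your extra remarks on coherence/Moore-path bookkeeping (and the optional adjunction-theoretic packaging) are harmless refinements of what the paper does.
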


\begin{proof} 
  A point of $\widecheck{S_B}\odot\widehat{E_{+B}}$ (other than the basepoint) is literally just a point of $E$, or more precisely a pair $(b,e)$ where $p(e)=b$.
  By definition, $\hbar\odot \id$ takes $(b,e)$ to to the triple $(\fbar(b), c_{\fbar(b)}, e)$.
  Then $\id\odot h$ acts on this triple by transporting $f(e)$ along the constant path $c_{\fbar(b)}$, which up to homotopy does nothing.
  Hence, the end result is $(\fbar(b), f(e)) \in \widecheck{S_B}\odot{\widehat{E_{+B}}}$, corresponding to $f(e)\in E_+$.
\end{proof}

It remains now to identify the map induced on homology by $\tr(h)$ with the refined fiberwise Lefschetz number.
Let $\gamma\in \Lambda^{\fbar} B$; thus $\gamma$ is a path $b\leadsto \fbar(b)$ in $B$.
We have an induced map
\[\xymatrix{\gamma_*\colon S^0\ar[r]&
(\Lambda^{\fbar} B)_+ \simeq \sh{B_{\overline{f}}}
}\]
which picks out $\gamma$, and we are interested in the composite 
\[\xymatrix@C=3pc{
S^n \ar[r]^-{S^n \sm \gamma_*}&
S^n \sm \sh{B_{\overline{f}}}\ar[r]^-{\tr(h)}
& S^n.
}\]
It will suffice to show that for any $\gamma$, this composite is the trace, in the sense of \S\ref{sec:n-duality-lefschetz}, of the composite map
\begin{equation}
  \fib{p}{b}\xto{f_b} \fib{p}{\fbar(b)}\xto{h_\gamma}\fib{p}{b}.\label{eq:fibidentify}
\end{equation}
As before, $h_\gamma$ denotes transport along $\gamma$ in the fibration $p$.
We will prove this using \autoref{thm:compose-traces2} again: we will exhibit a map $\omega_\gamma$ whose trace is $\gamma_*$, and such that~\eqref{eq:fibidentify} can be identified with the composite $(\id\odot h)(\omega_\gamma\odot\id)$ appearing in \autoref{thm:compose-traces2}.

Then $\gamma$ is a homotopy witnessing the homotopy-commutativity of the square
\begin{equation}\label{eq:gamma-square}
  \vcenter{\xymatrix{
      \star \ar[r]^b\ar@{=}[d] &
      B \ar[d]^{\fbar}\\
      \star \ar[r]_b &
      B.
      }}
\end{equation}
Thus, by \autoref{thm:hopb} we have an induced map
\[\xymatrix{
\omega_\gamma\colon 
{}_bB\ar[r]&
{}_bB \odot B_{\fbar}
}\]
which sends $\delta\colon b \leadsto b'$ to the pair $(\gamma,\fbar(\delta))$.

\begin{lem}\label{thm:tr-omega}
The trace of $\omega_\gamma$ is homotopic to $S^n \sm \gamma_*$.
\end{lem}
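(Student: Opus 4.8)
The plan is to obtain this as a direct instance of \autoref{thm:square-trace}. The crucial point is already built into the construction of $\omega_\gamma$: by the discussion preceding the lemma, $\omega_\gamma$ is exactly the map that \autoref{thm:hopb} associates to the square~\eqref{eq:gamma-square}, and that square has precisely the shape to which \autoref{thm:square-trace} applies, with the two horizontal edges the map $b\colon\star\to B$, the left edge $\id_\star$, the right edge $\fbar$, and specified homotopy $\gamma\colon b\leadsto\fbar(b)$. First I would dispatch the one piece of bookkeeping. The source $\star_{\id_\star}\odot_\star{}_bB$ of the map produced in \autoref{thm:square-trace} is canonically ${}_bB$ via the unit equivalence, since $\star_{\id_\star} = U_\star$ is the unit for $\odot_\star$; and under this identification the formula for that map from the proof of \autoref{thm:hopb} becomes $\delta\mapsto(b(c_\star)\cdot\gamma,\,\fbar(\delta)) = (c_b\cdot\gamma,\,\fbar(\delta))$, which agrees with the formula $\delta\mapsto(\gamma,\fbar(\delta))$ defining $\omega_\gamma$ up to the reparametrization $c_b\cdot\gamma\simeq\gamma$.

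Granting this, \autoref{thm:square-trace} identifies $\tr(\omega_\gamma)\colon\sh{\star_{\id_\star}}\to\sh{B_\fbar}$, i.e.\ $\Lambda^{\id_\star}\star_+\to\Lambda^\fbar B_+$, as the map carrying the unique point of $\Lambda^{\id_\star}\star$ (the constant loop $c_\star$) to $b(c_\star)\cdot\gamma = c_b\cdot\gamma\simeq\gamma$; in other words $\tr(\omega_\gamma)$ picks out $\gamma\in\Lambda^\fbar B$, which is exactly $\gamma_*$. (As a sanity check, when $\fbar = \id_B$ and $\gamma = c_b$ the map $\omega_\gamma$ is the identity of ${}_bB$ and this reduces to \autoref{thm:trid-bco}.) The last thing to settle is the sphere coordinate: \autoref{thm:bco-dual}, and hence \autoref{thm:square-trace}, are stated using the $0$-dimensional duality data for base change objects, so the computation just made is the assertion at $n=0$. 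In \S\ref{sec:lefschetz} we regard ${}_bB$ as $n$-dualizable by smashing that data with $S^n$, and the trace construction of \S\ref{sec:parametrized-trace} commutes with this smash factor, so the resulting trace is $S^n$ smashed with the $n=0$ trace, namely $S^n\sm\gamma_*$.

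If one prefers to avoid citing \autoref{thm:square-trace}, the same answer drops out of a direct unwinding of the twisted trace, in the style of the proofs of \autoref{thm:square-trace} and \autoref{thm:hhcomp}: with $M = {}_bB$ and $\rdual{M} = B_b$ as in \autoref{thm:bco-dual}, twisting objects $Q = U_\star$ and $P = B_\fbar$, the coevaluation sends the generator to the pair of constant paths $(c_b,c_b)$, the map $\omega_\gamma\odot\id$ replaces the first entry by $\gamma$, and the evaluation — which merely concatenates paths — then returns $c_b\cdot\gamma\cdot c_b\simeq\gamma$. I expect no genuine obstacle on either route; the only thing really requiring attention is reconciling the $n=0$ framing of \autoref{thm:square-trace} with the $S^n$ in the present statement, together with the routine unit and duality identifications, so the argument stays short.
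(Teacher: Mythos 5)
Your proof is correct and matches the paper's: the paper's entire proof is the single observation that this is \autoref{thm:square-trace} applied to the square~\eqref{eq:gamma-square}. The extra bookkeeping you supply (the unit identification $\sh{U_\star}\simeq S^0$, the reparametrization $c_b\cdot\gamma\simeq\gamma$, and the $S^n$-smash of the $n=0$ duality data) is all sound and simply makes explicit what the paper leaves implicit.
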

\begin{proof}
  This is just \autoref{thm:square-trace} applied to the square~\eqref{eq:gamma-square}.
\end{proof}

\begin{lem}\label{thm:fibcmpid}
  Up to homotopy, the map $\fib{p}{b}\xto{f_b} \fib{p}{\fbar(b)}\xto{h_\gamma}\fib{p}{b}$
can be identified with the composite 
\[ {}_bB \odot \Ehat \xto{\omega_\gamma\odot \id} {}_bB \odot B_{\fbar} \odot \Ehat \xto{\id \odot h} {}_bB \odot \Ehat.
\]
\end{lem}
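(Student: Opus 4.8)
The plan is to prove this by a direct point-level computation, unwinding the two ex-fibration maps using descriptions already in hand: the explicit formula for $\omega_\gamma$ (it ``sends $\delta\colon b\leadsto b'$ to $(\gamma,\fbar(\delta))$'', as recorded just before this lemma) and the description of $h$ from just before \autoref{thm:hhcomp} (it acts on a point $(\zeta,e)$, with $\zeta\colon b_1\leadsto\fbar(p(e))$, by transporting $f(e)$ along $\zeta$). First I would record the identifications of the source and target. By the base-change-object property ${}_\fn B \odot_B M \simeq (\fn\times\id)^*M$ (equivalently, the identification ${}_aA\odot_A M \simeq (a,\id)^*M$ used in the proof of \autoref{thm:dual-char}), applied with $\fn=b\colon\star\to B$ and $M=\Ehat\to B\times\star$, we get ${}_bB\odot\Ehat \simeq b^*(E_{+B}) = (\fib{p}{b})_+$; concretely a point of ${}_bB\odot\Ehat$ is, up to the basepoint, a pair $(\delta,e)$ with $\delta\colon b\leadsto p(e)$, and the equivalence transports $e$ backwards along $\delta$ into $\fib{p}{b}$, while inversely $e'\in\fib{p}{b}$ corresponds to $(c_b,e')$. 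I would also note that transport along a constant path is homotopic to the identity, which lets me discard constant paths as they appear.

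Then I would chase the point $e'\in\fib{p}{b}$, represented as $(c_b,e')$. By the formula for $\omega_\gamma$ we have $\omega_\gamma(c_b)=(\gamma,\fbar(c_b))=(\gamma,c_{\fbar(b)})$, so $\omega_\gamma\odot\id$ produces the point of ${}_bB\odot B_{\fbar}\odot\Ehat$ whose ${}_bB$-factor is the path $\gamma\colon b\leadsto\fbar(b)$ (now lying over $\fbar(b)$), whose $B_{\fbar}$-factor is the constant path at $\fbar(b)=\fbar(p(e'))$, and whose $\Ehat$-factor is $e'$. Applying $\id\odot h$ leaves the ${}_bB$-factor $\gamma$ untouched and applies $h$ to the $(B_{\fbar}\odot\Ehat)$-part $(c_{\fbar(b)},e')$; by the description of $h$ this transports $f(e')\in\fib{p}{\fbar(b)}$ along the constant path, which up to homotopy does nothing, yielding the point $(\gamma,f(e'))$ of ${}_bB\odot\Ehat$. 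Under the target identification ${}_bB\odot\Ehat\simeq(\fib{p}{b})_+$, since $\gamma\colon b\leadsto\fbar(b)=p(f(e'))$, this point becomes the backward transport of $f(e')$ along $\gamma$, i.e. $h_\gamma(f(e'))=h_\gamma(f_b(e'))$ --- precisely the image of $e'$ under~\eqref{eq:fibidentify}. Since every identification invoked is natural in the evident sense, this point computation upgrades to a homotopy between the two composite maps.

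The step I expect to be the main obstacle is not any single calculation but the bookkeeping of the triple smash pullback ${}_bB\odot B_{\fbar}\odot\Ehat$: tracking which copy of $B$ is glued to which (and where the ``moving'' basepoint lives), and in particular verifying that the identification of the source ${}_bB\odot\Ehat$ with $(\fib{p}{b})_+$ at the start of the composite agrees with the identification of the target ${}_bB\odot\Ehat$ with $(\fib{p}{b})_+$ at the end, so that the result really is the endomorphism $h_\gamma\circ f_b$ of $\fib{p}{b}$ read through a single fixed identification rather than a self-map read off through two different ones. Everything else is routine path-concatenation together with the remark that transport along a constant path is homotopic to the identity.
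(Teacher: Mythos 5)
Your proposal is correct and matches the paper's own proof: both identify $\fib{p}{b}$ with ${}_bB\odot\Ehat$ via $e'\mapsto(c_b,e')$ (with inverse given by transport along the ${}_bB$-path), chase a point through $\omega_\gamma\odot\id$ and $\id\odot h$ using the same explicit formulas, and discard the constant path $c_{\fbar(b)}$ since transport along it is homotopic to the identity, yielding $h_\gamma(f_b(e'))$. The bookkeeping worry you flag is resolved exactly as you suggest, since the paper uses one and the same fiberwise equivalence (constant-path inclusion and its transport inverse) on both ends.
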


\begin{proof}
  Suppose given $\delta\colon b\leadsto b'$, and $e\in \fib{p}{b'}$, so that $(\delta,e) \in {}_bB \odot \Ehat$.
  As defined above, $\omega_\gamma \odot \id$ sends $(\delta,e)$ to
  \[(\gamma, \fbar(\delta), e) \in {}_bB \odot B_{\fbar} \odot \Ehat.\]
  Then $\id\odot h$ acts on this by transporting $f(e)$ along the
  path $\fbar(\delta)$.

  We now have to identify ${}_bB \odot \Ehat$ with $\fib{p}{b}$ on both sides.
  The fiberwise homotopy equivalence $\fib{p}{b}\to {}_bB \odot \Ehat$ sends a point $e$ to $(c_{p(e)},e)$, while its inverse acts on a pair $(\beta,e)$ by transporting $e$ along $\beta$.
  Since transporting along $\fbar(c_{p(e)}) = c_{\fbar(p(e))}$ is (homotopic to) the identity,
  we see that the image of $e\in \fib{p}{b}$ is the result of transporting $f(e)$ along $\gamma$, which is precisely the desired composite
  \[\fib{p}{b}\xto{f_b} \fib{p}{\fbar(b)}\xto{h_\gamma}\fib{p}{b}.\qedhere\]
\end{proof}

\begin{prop}\label{thm:fibidentify}
  The map induced on homology by $\tr(h) \colon S^n \sm \sh{B_{\overline{f}}} \rightarrow  S^n$ is the refined fiberwise Lefschetz number $\widehat{L_B}(f)$.
\end{prop}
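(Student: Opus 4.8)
The plan is to compute the map induced on homology by $\tr(h)$ basis element by basis element in $H_0(\Lambda^{\fbar}B)$. Recall that $H_0(\Lambda^{\fbar}B) \cong \bbZ\sh{\pi_1 B_{\fbar}}$ has as a basis the components of the twisted free loop space, i.e.\ classes of pairs $(b,\gamma\colon b\leadsto \fbar(b))$. A class $[\gamma]$ corresponds to the image of $1$ under the map $\gamma_*\colon S^0 \to (\Lambda^{\fbar}B)_+$ picking out $\gamma$, so it suffices to identify the composite $S^n \xto{S^n\sm\gamma_*} S^n \sm \sh{B_{\fbar}} \xto{\tr(h)} S^n$ as a self-map of $S^n$, and to show its degree is $L(f_C)$ where $C$ is the fixed-point class of $\gamma$.

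First I would invoke \autoref{thm:tr-omega}, which says precisely that $S^n\sm\gamma_*$ is (homotopic to) the trace of the map $\omega_\gamma\colon {}_bB \to {}_bB\odot B_{\fbar}$. This puts us exactly in the situation of \autoref{thm:compose-traces2}, with the first ``twisted trace'' being $\tr(\omega_\gamma)$ and the second being $\tr(h)$ (with $M = {}_bB$, $N = \Ehat$, and the twisting objects $U_\star$, $B_{\fbar}$, $U_\star$ respectively — here $\sh{U_\star} = S^0$, so the outer homology groups are just $\bbZ$). That theorem then identifies the composite $\tr(h)\circ\tr(\omega_\gamma)$ with the trace of the single composite map $(\id_{\Ehat}\odot h)\circ(\omega_\gamma\odot\id_{\Ehat})\colon {}_bB\odot\Ehat \to {}_bB\odot\Ehat$.

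Next I would apply \autoref{thm:fibcmpid}, which identifies that composite endomorphism of ${}_bB\odot\Ehat$, once we use the canonical equivalence ${}_bB\odot\Ehat \simeq \fib{p}{b}$, with the composite $\fib{p}{b}\xto{f_b}\fib{p}{\fbar(b)}\xto{h_\gamma}\fib{p}{b}$ of \eqref{eq:fibidentify}. Since the notion of trace is invariant under equivalence, the trace of this endomorphism of the classically $n$-dualizable based space $\fib{p}{b}$ is, by definition (\S\ref{sec:n-duality-lefschetz}), the map $S^n\to S^n$ of degree equal to the Lefschetz number $L(h_\gamma\circ f_b)$. By the discussion in the introduction following \eqref{eq:fibidentify}, this Lefschetz number is exactly $L(f_C)$ for the fixed-point class $C$ containing $\gamma$ — and it depends only on $C$, not on the chosen representative. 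Therefore $\tr(h)$ sends the basis element $C \in \sh{\pi_1 B_{\fbar}} \subset H_0(\Lambda^{\fbar}B)$ to $L(f_C) \in \bbZ$, which is precisely the defining property of the refined fiberwise Lefschetz number $\widehat{L_B}(f)$.

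I do not expect a serious obstacle here, since all the real work has been isolated into the preceding lemmas. The one point requiring minor care is bookkeeping: checking that applying \autoref{thm:compose-traces2} in the degenerate case $A = C = \star$ genuinely recovers the ordinary (untwisted) trace on the outer ends, so that the degree of the resulting self-map of $S^n$ is what computes $L(f_C)$ after passing to $n$th reduced homology. This is immediate from the definition of the twisted trace once one notes $\sh{U_\star}\simeq S^0$, but it is worth stating explicitly so the reader sees the interface with \S\ref{sec:n-duality-lefschetz}.
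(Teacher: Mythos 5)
Your proposal is correct and follows essentially the same route as the paper's proof: it reduces to each basis element $\gamma$ via \autoref{thm:tr-omega}, composes traces using \autoref{thm:compose-traces2}, and identifies the resulting endomorphism with $h_\gamma \circ f_b$ via \autoref{thm:fibcmpid}. The extra remarks on the degenerate twisting objects and independence of the representative are fine but not needed beyond what the paper already records.
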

\begin{proof}
  Recall that by definition, $\widehat{L_B}(f) \colon\bbZ\sh{\pi_1 B_{\fbar}} \to \bbZ$ sends each path $\gamma\colon b \leadsto \fbar(b)$ to the Lefschetz number of the composite
  \[ \fib{p}{b} \xto{f_b} \fib{p}{\fbar(b)} \xto{h_\gamma} \fib{p}{b}. \]
  Thus, it will suffice to show that for every such $\gamma$, the action of $\tr(h)$ on the corresponding copy of $S^n$ has degree equal to this Lefschetz number, i.e.\ is homotopic to the trace of $h_\gamma \circ f_b$.
  But the inclusion of that copy of $S^n$ is just $S^n \sm \gamma_*$, which by \autoref{thm:tr-omega} is the trace of $\omega_\gamma$.
  Thus, by \autoref{thm:compose-traces2}, the composite
  \[ S^n \xto{S^n \sm \gamma_*} S^n \sm \sh{B_{\overline{f}}} \xto{\tr (h)}  S^n \]
  is homotopic to the trace of
  \[ {}_bB \odot \Ehat \xto{\omega_\gamma\odot \id} {}_bB \odot B_{\fbar} \odot \Ehat \xto{\id\odot h} {}_bB \odot \Ehat.
  \]
  But by \autoref{thm:fibcmpid}, this is the trace of $h_\gamma \circ f_b$, as desired.
\end{proof}

We can now put everything together to prove \autoref{thm:lefschetz-intro}.

\begin{thm}\label{thm:lefschetz}
  Let $p\colon E\to B$ be a fibration such that $S_B$ is Costenoble-Waner dualizable, and each fiber $\fib{p}{b}$ is $n$-dualizable in the classical sense (such as if they are closed smooth manifolds).
  Then the composite
  \begin{equation*}
    \bbZ \xto{R(\fbar)} \bbZ\sh{\pi_1 B_{\fbar}} \xto{\widehat{L_B}(f)} \bbZ
  \end{equation*}
  is multiplication by $L(f)$.
\end{thm}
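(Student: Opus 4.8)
The plan is to assemble the ingredients already established and feed them into \autoref{thm:compose-traces2}. I would take $A=C=\star$, $M=\widecheck{S_B}$ (an ex-fibration over $\star\times B$), $N=\widehat{E_{+B}}$ (an ex-fibration over $B\times\star$), and as twisting objects $Q=U_\star$, $P=B_{\fbar}$, $R=U_\star$. Here $\widecheck{S_B}$ is $n$-dualizable because $S_B$ is Costenoble--Waner dualizable by hypothesis, while $\widehat{E_{+B}}$ is $n$-dualizable by \autoref{thm:fib-dual} since each fiber is classically $n$-dualizable. Under the unit equivalences $Q\odot_\star M\simeq M$ and $N\odot_\star R\simeq N$, the map ``$f$'' of \autoref{thm:compose-traces2} becomes a map $\widecheck{S_B}\to\widecheck{S_B}\odot_B B_{\fbar}$, which I take to be $\hbar$, and ``$g$'' becomes a map $B_{\fbar}\odot\widehat{E_{+B}}\to\widehat{E_{+B}}$, which I take to be $h$. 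Since the free loop space of a point is a point, $\sh{Q}\cong\sh{R}\cong S^0$, whereas $\sh{P}=\sh{B_{\fbar}}\simeq\Lambda^{\fbar}B_+$, so the triangle of \autoref{thm:compose-traces2} reads
\[ \tr\bigl((\id\odot h)\circ(\hbar\odot\id)\bigr)\;\simeq\;\tr(h)\circ\tr(\hbar)\colon S^n\to S^n \]
with middle object $S^n\sm\Lambda^{\fbar}B_+$.

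Next I would identify the three traces. For the top map, \autoref{thm:hhcomp} together with the equivalence $E_+\simeq\widecheck{S_B}\odot\widehat{E_{+B}}$ of~\eqref{eq:ss1} shows that $(\id\odot h)\circ(\hbar\odot\id)$ is homotopic to $f\colon E_+\to E_+$; since trace is homotopy invariant and $E_+$ is classically $n$-dualizable (by \autoref{thm:total-dual}), this trace is $\tr(f)$, whose degree is $L(f)$ by the definition of Lefschetz number in \S\ref{sec:n-duality-lefschetz}. For the other two, $\tr(\hbar)$ induces $R(\fbar)\colon\bbZ\to\bbZ\sh{\pi_1 B_{\fbar}}$ on homology by \autoref{thm:reidemeister_id}, and $\tr(h)$ induces $\widehat{L_B}(f)\colon\bbZ\sh{\pi_1 B_{\fbar}}\to\bbZ$ on homology by \autoref{thm:fibidentify}. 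Applying reduced $n^{\mathrm{th}}$ homology to the displayed stable homotopy (equivalently, $H_0$ of the relevant loop spaces, as in \S\ref{sec:parametrized-trace}) and using functoriality of homology, one concludes that $\widehat{L_B}(f)\circ R(\fbar)$ is multiplication by $L(f)$, which is exactly the assertion.

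I do not anticipate a genuine obstacle: every substantive input has already been done, namely the dualizability of $\widecheck{S_B}$ and $\widehat{E_{+B}}$, the computation of $\tr(\hbar)$ and $\tr(h)$ on homology, and the homotopy $(\id\odot h)\circ(\hbar\odot\id)\simeq f$. What requires care is only the bookkeeping in \autoref{thm:compose-traces2}: choosing $A=C=\star$ so that $\sh{Q}$ and $\sh{R}$ degenerate to $S^0$, matching $\hbar$ and $h$ with the hypotheses of that theorem through the unit isomorphisms $Q\odot M\simeq M$ and $N\odot R\simeq N$, and keeping track of the fact that the two descriptions of $R(\fbar)$ and $\widehat{L_B}(f)$ — the combinatorial ones of the introduction and the homological ones — agree, which is precisely the content of \autoref{thm:reidemeister_id} and \autoref{thm:fibidentify}. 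A minor point to confirm along the way is that the Lefschetz number obtained from $\tr(f)$ as a degree coincides with the fixed-point-index description used elsewhere, but this is the standard agreement recorded in \S\ref{sec:n-duality-lefschetz} and \autoref{thm:defns-of-L}.
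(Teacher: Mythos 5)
Your proposal is correct and follows essentially the same route as the paper: identify $R(\fbar)$ and $\widehat{L_B}(f)$ with $H_n(\tr(\hbar))$ and $H_n(\tr(h))$ via \autoref{thm:reidemeister_id} and \autoref{thm:fibidentify}, then use \autoref{thm:compose-traces2} together with \autoref{thm:hhcomp} to identify $\tr(h)\circ\tr(\hbar)$ with $\tr(f)$, whose degree is $L(f)$. Your explicit bookkeeping of the objects and twists in \autoref{thm:compose-traces2} is just a spelled-out version of the same argument.
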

\begin{proof}
  We identify $\bbZ$ with $H_n(S^n)$ and $\bbZ\;\sh{\pi_1 B_{\fbar}}$ with $H_n(S^n \sm \Lambda^{\fbar} B_+)$.
  By \autoref{thm:fibidentify}, we can then identify $\widehat{L_B}(f)$ with $H_n(\tr(h))$, and by \autoref{thm:reidemeister_id} we can identify $R(\fbar)$ with $H_n(\tr(\hbar))$.
  Thus, by the functoriality of homology, it suffices to prove that the composite
  \[ S^n \xto{\tr(\hbar)} S^n \sm \Lambda^{\fbar} B_+ \xto{\tr(h)} S^n \]
  is homotopic to $\tr(f)$.
  But this follows from \autoref{thm:compose-traces2} and \autoref{thm:hhcomp}.
\end{proof}

\section{Reidemeister trace}
\label{sec:reidemeister}

We now move on to \autoref{thm:reidemeister-intro} from the introduction.
For this we must strengthen our standing assumptions from the previous section to include that $S_{\fib{p}{b}}$, as well as $S_B$, is Costenoble-Waner dualizable for each $b\in B$.
By \autoref{thm:total-dual}, this implies our previous assumption that $\fib{p}{b}$ is classically $n$-dualizable, but by \autoref{thm:mfd-cwdual} it still suffices to assume that $B$ and each fiber are closed smooth manifolds.
Note that by \autoref{thm:total-cwdual}, our stronger hypotheses imply that $S_E$ is also Costenoble-Waner dualizable.

Also as in the last section, we prove a topological result slightly stronger than \autoref{thm:reidemeister-intro}, and in outline the proof is quite similar.
Consider $f\colon E\rightarrow E$ simply as an endomorphism of the space $E$ (not as a map of fibrations).
Since $S_E$ is Costenoble-Waner dualizable, by \autoref{thm:reidemeister_id} we have a fiberwise map
\[\ftil\colon \widecheck{S_E}\rightarrow \widecheck{S_E}\odot E_f.\]
over $\star\times E$, whose trace induces $R(f)$ on homology.

Recall the map
\[ \hbar\colon \widecheck{S_B} \to \widecheck{S_B}\odot_B B_{\fbar} \]
whose trace $\tr(\hbar)\colon S^n \to S^n \sm \Lambda^{\fbar} B_+$ induces the Reidemeister trace $R(\fbar)$ on homology.
We will now define a map having a trace of the form
\[S^n \sm \Lambda^{\fbar} B_+ \to S^n \sm \Lambda^f E_+,\]
which induces the refined fiberwise Reidemeister trace $\widehat{R}_B(f)$ on homology.
We will then apply \autoref{thm:compose-traces2}, as before.

Now, by pseudofunctoriality of base change objects (\autoref{thm:psfr}), the equality $\fbar p = p f$ determines a fiberwise equivalence
\[ h \colon B_{\fbar} \odot B_p \xto{\simeq} B_p \odot E_f\]
over $B\times E$.
This will play the role of the $h$ from the previous section.
A point of $B_{\fbar} \odot B_p$ over $(b,e)$ is a pair $({\alpha},{\beta})$, where $\alpha\colon b\leadsto \fbar(b')$ and $\beta\colon b'\leadsto p(e)$ for some $b'\in B$.
Similarly, a point of $B_p \odot E_f$ over $(b,e)$ is a pair $({\gamma},{\delta})$, where $\gamma\colon b\leadsto p(e')$ and $\delta\colon e' \leadsto f(e)$ for some $e'\in E$.
The map $h$ sends $(\alpha,\beta)$ to $(\alpha\cdot\fbar(\beta), c_{f(e)})$; its homotopy inverse sends $(\gamma,\delta)$ to $(\gamma \cdot p(\delta), c_{p(e)})$.

We also have an equivalence
\begin{equation}
  \widecheck{S_E} \simeq \widecheck{S_B} \odot B_p\label{eq:SEequiv}
\end{equation}
determined by the equality $r_B p = r_E$, as in the proof of \autoref{thm:total-cwdual}.
In one direction, this equivalence sends $e$ to $(p(e),c_{p(e)})$.
In the other direction it sends a pair $(b,\beta)$, where $\beta\colon b\leadsto p(e)$, to the point $e$.

We begin with the analogue of \autoref{thm:hhcomp}, whose proof is also analogous.

\begin{lem}\label{thm:hhcomp2}
Modulo the equivalence~\eqref{eq:SEequiv}, the fiberwise map $\ftil\colon \widecheck{S_E}\rightarrow \widecheck{S_E}\odot E_f$
is (homotopic to) the composite
\begin{equation}
\xymatrix{
\widecheck{S_B}\odot B_p\ar[r]^-{\overline{h}\odot \id}&
\widecheck{S_B}\odot B_{\overline{f}}\odot B_p\ar[r]^-{\id\odot h}&
\widecheck{S_B}\odot B_p\odot E_f.
}\label{eq:hhcomp2}
\end{equation}
\end{lem}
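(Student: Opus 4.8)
The plan is to mimic the proof of \autoref{thm:hhcomp}: both maps in question are assembled from base change objects, so they have explicit point-level descriptions, and it suffices to chase a generic point through each and check they agree up to homotopy. Recall the ingredients. As in the proof of \autoref{thm:hhcomp}, the map $\hbar$ sends a point $b$ of $\widecheck{S_B}$ (lying over $b\in B$) to the pair $(\fbar(b),c_{\fbar(b)})$ of $\widecheck{S_B}\odot_B B_{\fbar}$; the equivalence $h\colon B_{\fbar}\odot B_p\to B_p\odot E_f$ has the concatenation formula recalled just before this lemma; and the equivalence \eqref{eq:SEequiv} sends a non-basepoint $(b,\beta)$ of $\widecheck{S_B}\odot B_p$ lying over $e\in E$ (so $\beta\colon b\leadsto p(e)$) to the point $e$ of $\widecheck{S_E}$, running backwards by $e\mapsto(p(e),c_{p(e)})$. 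By the evident analogue for $f$ of the description of $\hbar$, the map $\ftil$ sends $e$ to $(f(e),c_{f(e)})$ in $\widecheck{S_E}\odot E_f$.

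Carrying out the chase: starting from a non-basepoint $(b,\beta)$ of $\widecheck{S_B}\odot B_p$ over $e\in E$, the map $\hbar\odot\id$ produces $(\fbar(b),\,c_{\fbar(b)},\,\beta)$ in $\widecheck{S_B}\odot B_{\fbar}\odot B_p$. The middle two coordinates form the point $(c_{\fbar(b)},\beta)$ of $B_{\fbar}\odot B_p$ lying over $(\fbar(b),e)$, on which $h$ acts by concatenating to give $(c_{\fbar(b)}\cdot\fbar(\beta),\,c_{f(e)})$. Since $c_{\fbar(b)}\cdot\fbar(\beta)$ is merely a reparametrization of $\fbar(\beta)$, the output of $\id\odot h$ is homotopic to the point $(\fbar(b),\,\fbar(\beta),\,c_{f(e)})$ of $\widecheck{S_B}\odot B_p\odot E_f$. (This point lies over $e$: the $B_p$-path $\fbar(\beta)$ ends at $\fbar(p(e))=p(f(e))$, so its associated point of $E$ is $f(e)$, and the $E_f$-path is the constant path at $f(e)$.)

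It remains to compare this with $\ftil$ transported along \eqref{eq:SEequiv} on both source and target. Under \eqref{eq:SEequiv} the point $(b,\beta)$ over $e$ corresponds to $e\in\widecheck{S_E}$, which $\ftil$ sends to $(f(e),c_{f(e)})$; pushing this back through $\widecheck{S_E}\odot E_f\simeq\widecheck{S_B}\odot B_p\odot E_f$ replaces the $\widecheck{S_E}$-coordinate $f(e)$ by $(p(f(e)),c_{p(f(e))})$, yielding $(p(f(e)),\,c_{p(f(e))},\,c_{f(e)})$. To finish I would observe that $(\fbar(b),\fbar(\beta))$ and $(p(f(e)),c_{p(f(e))})$ both lie in the fiber of $\widecheck{S_B}\odot B_p$ over $f(e)$, and the canonical contraction of that fiber onto its non-section point — which is precisely the homotopy witnessing \eqref{eq:SEequiv} — joins them, carrying the $E_f$-coordinate $c_{f(e)}$ along unchanged; this provides the desired homotopy.

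As the excerpt anticipates, the computation is entirely routine. The only step needing a little care beyond \autoref{thm:hhcomp} is the last one: in \autoref{thm:hhcomp} the relevant identification $E_+\simeq\widecheck{S_B}\odot\widehat{E_{+B}}$ from \autoref{thm:sphere-bco} is essentially an equality of points, whereas here \eqref{eq:SEequiv} is a genuine homotopy equivalence involving a path-space, so one must keep track of which base points lie over which and verify that the residual path $\fbar(\beta)$ is absorbed by that equivalence rather than by an honest equality. I expect this bookkeeping to be the main (though minor) obstacle.
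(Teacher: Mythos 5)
Your proof is correct and follows essentially the same point-level chase as the paper's, using the explicit formulas for $\hbar$, $h$, $\ftil$, and the two directions of~\eqref{eq:SEequiv}. The paper merely streamlines the last step by evaluating the composite only on points $(p(e),c_{p(e)})$ in the image of~\eqref{eq:SEequiv}, so every path involved is constant and the inverse equivalence simply forgets $c_{p(f(e))}$, avoiding the contraction homotopy you supply for a general point $(b,\beta)$.
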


\begin{proof}
  By definition, $\ftil$ sends a point $e$ to the pair $(f(e), c_{f(e)})$.
  Now according to the above description of~\eqref{eq:SEequiv}, we should inspect the action of~\eqref{eq:hhcomp2} on the pair $(p(e),c_{p(e)})$.
  By definition of \hbar, in $\widecheck{S_B}\odot B_{\overline{f}}\odot B_p$ we end up with the triple $(\fbar(p(e)),c_{\fbar(p(e))}, c_{p(e)})$.
  But by definition of $h$, this maps to $(p(f(e)),c_{p(f(e))}, c_{f(e)})$ in $\widecheck{S_B}\odot B_p\odot E_f$.
  The inverse of~\eqref{eq:SEequiv} then forgets the path $c_{p(f(e))}$, yielding the pair $(f(e),c_{f(e)})$ as desired.
\end{proof}

By \autoref{thm:Bp-dual}, $B_p$ is $n$-dualizable.
Therefore, $h$ has a trace
\[\tr(h) \colon \sh{B_{\fbar}} \to \sh{E_f} \]
which, on homology, induces a map
\[\bbZ\sh{\pi_1 B_{\fbar}} \to \bbZ\sh{\pi_1 E_f}.\]
We want to identify this with the refined fiberwise Reidemeister trace $\widehat{R_B}(f)$.
As in the previous section, we do this pointwise for all $\gamma \in \Lambda^{\fbar} B$, by composing with the same map $\omega_\gamma$ defined there.
We can reuse \autoref{thm:tr-omega} exactly, but we need a refined version of \autoref{thm:fibcmpid}.

Note first that since $p$ is a fibration, the square
\[\vcenter{\xymatrix{
    \fib{p}{b}\ar[r]^-{i_b}\ar[d]_r &
    E \ar[d]^p\\
    \ast \ar[r]_b &
    B
  }} \]
is a homotopy pullback.
Therefore, by \autoref{thm:hopb} we have an equivalence
\[\widecheck{S_{\fib{p}{b}}}\odot {}_{i_b} E \xto{\simeq} {}_bB \odot B_p. \]
A point of $\widecheck{S_{\fib{p}{b}}}\odot {}_{i_b} E$ over $e\in E$ is a pair $(e',\delta)$, where $e'\in \fib{p}{b}$ and $\delta\colon e'\leadsto e$ is a path in $E$; this equivalence sends such a point to the pair $(c_b, p(\delta))$.

Let $k_b$ denote the composite map
\[\fib{p}{b}\xto{f_b} \fib{p}{\fbar(b)}\xto{h_\gamma}\fib{p}{b}.\]
which induces a map
\[\widetilde{k_b}\colon \widecheck{S_{\fib{p}{b}}}\rightarrow \widecheck{S_{\fib{p}{b}}}\odot (\fib{p}{b})_{k_b}\]
over $\star\times \fib{p}{b}$.
By definition, $\widetilde{k_b}$ sends each $e\in \fib{p}{b}$ to the pair $(k_b(e),c_{k_b(e)})$.
Since $S_{\fib{p}{b}}$ is Costenoble-Waner dualizable, $\widetilde{k_b}$ has a trace, which by \autoref{thm:reidemeister_id} calculates the Reidemeister trace of $k_b$.

Now the path-lifting operation which defines $h_\gamma$ actually gives us a homotopy $\theta$ in the lower square below (the upper square commutes by definition):
\begin{equation}\label{eq:ell-square}
\vcenter{\xymatrix{
    \fib{p}{b}\ar[d]_{f_b} \ar[r]^-{i_b} \ar@/_3pc/[dd]_{k_b} &
    E \ar[d]^f \\
    \fib{p}{\fbar(b)}\ar[r]^-{i_{\fbar (b)}}\ar[d]_{h_\gamma} \ar@{}[dr]|{\theta} &
    E\ar@{=}[d]^{\id_E}\\
    \fib{p}{b} \ar[r]_-{i_b} &
    E}}
\end{equation}

Thus, by \autoref{thm:hopb} we have a map
\[ \ell\colon (\fib{p}{b})_{k_b} \odot {}_{i_b} E \too {}_{i_b} E  \odot E_f. \]

\begin{lem}\label{thm:fibcmpid2}
  The following diagram commutes up to homotopy:
  \[
  \xymatrix@C=3pc{
  \widecheck{S_{\fib{p}{b}}}\odot {}_{i_b} E \ar[r]^-{\widetilde{k_b} \odot \id} \ar[d]_\simeq &
  \widecheck{S_{\fib{p}{b}}} \odot (\fib{p}{b})_{k_b} \odot {}_{i_b} E \ar[r]^-{\id \odot \ell} &
  \widecheck{S_{\fib{p}{b}}} \odot {}_{i_b} E  \odot E_f  \ar[d]^\simeq\\
  {}_bB \odot B_p \ar[r]_-{\omega_\gamma\odot \id}  &
  {}_bB \odot B_{\fbar} \odot B_p \ar[r]_-{\id \odot h} &
  {}_bB \odot B_p \odot E_f
  }\]
\end{lem}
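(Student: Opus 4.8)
My plan is to prove this by a diagram chase with explicit representatives, following the template of the proof of \autoref{thm:fibcmpid} but now also keeping track of the path data carried by the base change objects $(\fib{p}{b})_{k_b}$, $B_{\fbar}$, $B_p$ and $E_f$. All the arrows in the square have ready pointwise descriptions: $\widetilde{k_b}$ sends $e'\in\fib{p}{b}$ to $(k_b(e'),c_{k_b(e')})$; $\omega_\gamma$ sends $\delta\colon b\leadsto b'$ to $(\gamma,\fbar(\delta))$; $h$ sends $(\alpha,\beta)$ to $(\alpha\cdot\fbar(\beta),c_{f(e)})$; $\ell$ is the instance of \autoref{thm:hopb} attached to the pasted rectangle~\eqref{eq:ell-square}, so it acts by concatenating the relevant $E$-path coordinate with $\theta(f_b(-))$ and applying $f$; and the two vertical equivalences are the instance of \autoref{thm:hopb} coming from the (strictly commuting) homotopy pullback square $p\,i_b=b\,r$, which sends a point $(e',\delta)$ of $\widecheck{S_{\fib{p}{b}}}\odot{}_{i_b}E$ to $(c_b,p(\delta))$ in ${}_bB\odot B_p$ and simply carries along a trailing $E_f$-coordinate.

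First I would fix a non-basepoint of $\widecheck{S_{\fib{p}{b}}}\odot{}_{i_b}E$ over $e\in E$ — a pair $(e',\delta)$ with $e'\in\fib{p}{b}$ and $\delta\colon e'\leadsto e$ in $E$ — and push it around both ways into ${}_bB\odot B_p\odot E_f$ over $e$. Along the top and right edges: $\widetilde{k_b}\odot\id$ yields the triple $(k_b(e'),c_{k_b(e')},\delta)$; then $\id\odot\ell$ rewrites the last two coordinates by the \autoref{thm:hopb} recipe, inserting the $\theta$-lift of $\gamma$ through $f(e')$ and replacing $\delta$ by $f(\delta)$; and the right vertical equivalence prepends $c_b$ and projects the relevant $E$-path to $B$ by $p$. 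Along the left and bottom edges: the left vertical equivalence gives $(c_b,p(\delta))$; $\omega_\gamma\odot\id$ turns the ${}_bB$-coordinate into $(\gamma,c_{\fbar(b)})$, hence gives a point of ${}_bB\odot B_{\fbar}\odot B_p$ assembled out of $\gamma$ and $p(\delta)$; and $\id\odot h$ concatenates $\fbar(p(\delta))$ onto the $B_{\fbar}$-coordinate, lands the result in $B_p$, and attaches the constant $E_f$-coordinate $c_{f(e)}$.

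It then remains to reconcile these two representatives, which differ only in how the single ``unit of length'' — the path $\delta$, the twisting path $\gamma$, and the lift $\theta$ — is distributed among the three $\odot$-factors, together with the usual scattering of constant paths. The equivalence relation defining $\odot$ lets one slide path data across the intermediate auxiliary points, and the two representatives become equal once one checks that after such sliding the concatenations of $\gamma$, $\fbar(p(\delta))$, $p(\theta(-))$ and the constant paths agree up to reparametrization on the two sides. The one substantive input is that $\theta$ is by construction the homotopy produced by the very path-lifting of $\gamma$ that defines $h_\gamma$ (hence $k_b=h_\gamma\circ f_b$), so that $p(\theta(-))$ is homotopic to $\gamma$; this is precisely what matches the $\gamma$ that $\omega_\gamma$ introduces on the bottom with the one that $\ell$ produces on the top. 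I expect the main obstacle to be purely organizational: with three $\odot$-factors and four homotopies in play at once ($\theta$, $\gamma$, and the trivial ones attached to the $i_b$-square and to $\widetilde{k_b}$), all the care goes into bookkeeping which constant paths may be absorbed and which concatenations must be matched through $\theta$ — conceptually nothing happens here beyond what was already done for \autoref{thm:fibcmpid}. One could instead phrase the argument via the compatibility of \autoref{thm:hopb} with vertical pasting of homotopy-pullback squares, but the direct chase is more transparent.
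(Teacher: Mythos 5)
Your proposal is correct and follows essentially the same route as the paper: chase an explicit representative $(e',\delta)$ through both composites (obtaining $(c_b,\gamma,f(\delta))$ along the top-right and $(\gamma,\fbar(p(\delta)),c_{f(e)})$ along the left-bottom) and reconcile the two by deforming the path data across the $\odot$-factors, the substantive input being that $p(\theta(f_b(e')))$ is the path $\gamma$, exactly as in the paper. One minor caveat: the reconciliation is not the equivalence relation built into $\odot$ (which only identifies sections/basepoints) but a genuine fiberwise homotopy sliding the intermediate points along $\gamma$ and $f(\delta)$ — which is all the lemma asks for, so the argument stands.
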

\begin{proof}
  As remarked above, a point of $\widecheck{S_{\fib{p}{b}}}\odot {}_{i_b} E$ over $e\in E$ is a pair $(e',\delta)$ where $e'\in \fib{p}{b}$ and $\delta\colon e' \leadsto e$.
  The upper-right composite sends this to $(k_b(e'),c_{k_b(e')}, \delta)$, then to $(k_b(e'),\theta(e'), f(\delta))$, and then to $(c_b, p (\theta(e')), f(\delta)) = (c_b, \gamma, f(\delta))$.
  On the other hand, the left-bottom composite sends $(e',\delta)$ to $(c_b,p(\delta))$, then to $(\gamma, c_{\fbar(b')}, p(\delta))$, then to $(\gamma, \fbar(p(\delta)),c_{f(e)})$.
  But we can deform $(c_b, \gamma, f(\delta))$ along $\gamma$ to obtain $(\gamma, c_{\fbar(b)}, f(\delta))$, and then along $f(\delta)$ to obtain $(\gamma, p(f(\delta)), c_{f(e)}) = (\gamma, \fbar(p(\delta)), c_{f(e)})$ as desired.
\end{proof}

Now since ${}_{i_b} E$ is $n$-dualizable by \autoref{thm:bco-dual}, $\ell$ has a trace
\[ \tr (\ell) \colon \sh{(\fib{p}{b})_{k_b}} \too \sh{E_f}. \]

\begin{lem}\label{thm:inclusion}
  On homology the trace of $\ell$ induces  the map
  \[ H_0(\Lambda^{k_b} (\fib{p}{b})) \cong \bbZ\sh{\pi_1 (\fib{p}{b})_{k_b}} \too
  \bbZ\sh{\pi_1 E_f} \cong H_0 (\Lambda^f E)
  \]
  from the introduction, obtained by regarding a path in $\fib{p}{b}$ as a path in $E$.
\end{lem}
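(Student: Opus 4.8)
The plan is to recognize $\ell$ as the map produced by \autoref{thm:hopb} from the outer square of~\eqref{eq:ell-square}, namely
\[\vcenter{\xymatrix{
  \fib{p}{b}\ar[r]^{i_b}\ar[d]_{k_b} & E\ar[d]^f\\
  \fib{p}{b}\ar[r]_{i_b} & E
}}\]
and then to invoke \autoref{thm:square-trace}. The homotopy witnessing commutativity of this outer square is the pasting of the (strict) upper square $f i_b = i_{\fbar(b)}f_b$ of~\eqref{eq:ell-square} with the path-lifting homotopy $\theta\colon i_{\fbar(b)}\sim i_b h_\gamma$ of the lower square (whiskered on the right by $f_b$); up to reversal this is a homotopy $\psi\colon i_b\circ k_b\sim f\circ i_b$. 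This is precisely the situation of \autoref{thm:square-trace} with $A=\fib{p}{b}$, $B=E$, $\fn=i_b$, $f_1=k_b$, $f_2=f$, and $\alpha=\psi$; note that ${}_{i_b}E$ is a base change object, hence right $n$-dualizable by \autoref{thm:bco-dual}, so that $\tr(\ell)$ is defined and $\ell$ is literally the map denoted $h$ in \autoref{thm:square-trace}.

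\autoref{thm:square-trace} then identifies $\tr(\ell)\colon\sh{(\fib{p}{b})_{k_b}}\to\sh{E_f}$, up to homotopy, with the map $\Lambda^{k_b}(\fib{p}{b})_+\to\Lambda^f E_+$ sending a path $\zeta\colon y\leadsto k_b(y)$ in $\fib{p}{b}$ to $i_b(\zeta)\cdot\psi(y)$. Unwinding $\psi$, the correction $\psi(y)$ is the reverse of $\theta$ evaluated at $f_b(y)$, running from $i_b(k_b(y))=i_b(h_\gamma(f_b(y)))$ to $i_{\fbar(b)}(f_b(y))=f(i_b(y))$; so $\zeta$ is carried to the path in $E$ that first traverses $i_b(\zeta)$ and then slides back along the lifted path. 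This is exactly the recipe given in the introduction for turning a path representing a fixed-point class of $k_b$ into a path representing a fixed-point class of $f$ --- that is, ``regarding a path in $\fib{p}{b}$ as a path in $E$'', with precisely the homotopy-lifting correction that is needed because $b$ need not be a fixed point of $\fbar$. Applying $H_0$ and the identifications $H_0(\Lambda^{k_b}(\fib{p}{b}))\cong\bbZ\sh{\pi_1(\fib{p}{b})_{k_b}}$ and $H_0(\Lambda^f E)\cong\bbZ\sh{\pi_1 E_f}$ then yields the map $i_C$ of the introduction.

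The only real work will be bookkeeping: checking that the homotopy making the outer square of~\eqref{eq:ell-square} commute is, up to whiskering and reversal, the path-lifting homotopy $\theta$, and matching the explicit formula $\zeta\mapsto i_b(\zeta)\cdot\psi(y)$ coming out of \autoref{thm:square-trace} against the introduction's informal description of $i_C$. I do not anticipate any substantive difficulty here; everything reduces to \autoref{thm:hopb}, \autoref{thm:bco-dual}, and \autoref{thm:square-trace}.
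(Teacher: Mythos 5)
Your proposal is correct and is exactly the paper's argument: the paper's proof consists of observing that $\ell$ is the map furnished by \autoref{thm:hopb} from the (pasted) square~\eqref{eq:ell-square} and then applying \autoref{thm:square-trace}, which is precisely your route, with the whiskered path-lifting homotopy and the formula $\zeta\mapsto i_b(\zeta)\cdot\psi(y)$ spelled out in more detail than the paper bothers to. The only caveat is the direction convention for $\theta$ (whether a reversal is needed), which you flag yourself and which does not affect the identification on $H_0$.
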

\begin{proof}
  This is just \autoref{thm:square-trace} applied to the square~\eqref{eq:ell-square}.
\end{proof}

\begin{prop}\label{thm:fibidentify2}
  The map induced on homology by $\tr(h) \colon S^n \sm \sh{B_{\overline{f}}} \rightarrow  S^n \sm \sh{E_f}$ is the refined fiberwise Reidemeister trace $\widehat{R_B}(f)$.
\end{prop}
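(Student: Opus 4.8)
The proof runs parallel to that of \autoref{thm:fibidentify}, with the numerical lemmas of \S\ref{sec:lefschetz} replaced by the $\bbZ\sh{\pi_1 E_f}$-valued refinements \autoref{thm:fibcmpid2} and \autoref{thm:inclusion}. Recall that $\widehat{R_B}(f)$ sends the basis element $C\in\sh{\pi_1 B_{\fbar}}=\pi_0(\Lambda^{\fbar}B)$ represented by a path $\gamma\colon b\leadsto\fbar(b)$ to $i_C(R(f_C))=i_C(R(k_b))$, where $k_b=h_\gamma\circ f_b\colon\fib{p}{b}\to\fib{p}{b}$ and $i_C$ is the inclusion-induced map $\bbZ\sh{\pi_1(\fib{p}{b})_{k_b}}\to\bbZ\sh{\pi_1 E_f}$. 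Since $H_n(S^n\sm\sh{B_{\fbar}})\cong H_0(\Lambda^{\fbar}B)\cong\bbZ\sh{\pi_1 B_{\fbar}}$, and every basis element is $H_n(S^n\sm\gamma_*)$ applied to the generator of $H_n(S^n)$ for a suitable $\gamma$, it suffices to show that for every such $\gamma$ the composite
\[ S^n\xto{S^n\sm\gamma_*}S^n\sm\sh{B_{\fbar}}\xto{\tr(h)}S^n\sm\sh{E_f} \]
picks out $i_C(R(k_b))$ on homology.

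First I would rewrite $S^n\sm\gamma_*$ as $\tr(\omega_\gamma)$ by \autoref{thm:tr-omega}. Then \autoref{thm:compose-traces2} --- applied with $M={}_bB$, $N=B_p$, twisting objects $U_\star$, $B_{\fbar}$, $E_f$, and $2$-cells $\omega_\gamma$ and $h$, which is legitimate because ${}_bB$ and $B_p$ are $n$-dualizable by \autoref{thm:bco-dual} and \autoref{thm:Bp-dual} --- identifies the displayed composite with the trace of the bottom row $(\id\odot h)\circ(\omega_\gamma\odot\id)\colon{}_bB\odot B_p\to{}_bB\odot B_p\odot E_f$ of the diagram in \autoref{thm:fibcmpid2}. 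That diagram exhibits this composite, modulo the equivalences $\widecheck{S_{\fib{p}{b}}}\odot{}_{i_b}E\simeq{}_bB\odot B_p$, as the top row $(\id\odot\ell)\circ(\widetilde{k_b}\odot\id)$. Since the trace of a twisted endomorphism is unchanged under conjugation by an equivalence of the underlying dualizable $1$-cell --- a formal property of bicategorical trace, already used in spirit in the proof of \autoref{thm:fibidentify} when the composite there was replaced by the fiberwise map $h_\gamma\circ f_b$ --- the two composites have equal traces. A second application of \autoref{thm:compose-traces2}, now with $M=\widecheck{S_{\fib{p}{b}}}$ (dualizable because $S_{\fib{p}{b}}$ is Costenoble--Waner dualizable by the standing hypotheses of this section), $N={}_{i_b}E$ (dualizable by \autoref{thm:bco-dual}), twisting objects $U_\star$, $(\fib{p}{b})_{k_b}$, $E_f$, and $2$-cells $\widetilde{k_b}$ and $\ell$, then splits this trace as $\tr(\ell)\circ\tr(\widetilde{k_b})$.

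It remains only to identify the two factors: by \autoref{thm:reidemeister_id}, $\tr(\widetilde{k_b})$ induces on homology the map $\bbZ\to H_0(\Lambda^{k_b}\fib{p}{b})$ picking out $R(k_b)$, and by \autoref{thm:inclusion}, $\tr(\ell)$ induces the inclusion map $H_0(\Lambda^{k_b}\fib{p}{b})\to H_0(\Lambda^f E)$ that is $i_C$. Composing, the displayed composite picks out $i_C(R(k_b))=i_C(R(f_C))$, which is the value of $\widehat{R_B}(f)$ on $C$; running over all $\gamma$ finishes the proof. I expect the only real subtlety to be the conjugation-invariance of the trace used in the middle step, together with the routine bookkeeping to confirm that every invocation of \autoref{thm:compose-traces2} meets its dualizability hypotheses under the assumptions in force.
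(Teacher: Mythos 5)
Your proof is correct and follows essentially the same route as the paper's: rewrite $S^n\sm\gamma_*$ as $\tr(\omega_\gamma)$, apply \autoref{thm:compose-traces2}, transfer across the equivalence via \autoref{thm:fibcmpid2}, apply \autoref{thm:compose-traces2} again, and conclude with \autoref{thm:reidemeister_id} and \autoref{thm:inclusion}. Your explicit remark about invariance of the trace under conjugation by the equivalence is the same point the paper uses implicitly when invoking \autoref{thm:fibcmpid2}, so there is no substantive difference.
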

\begin{proof}
  Recall that by definition, $\widehat{R_B}(f) \colon\bbZ\sh{\pi_1 B_{\fbar}} \to \bbZ\sh{\pi_1 E_f}$ sends each path $\gamma\colon b \leadsto \fbar(b)$ to the image under $i_b$ of the Reidemeister trace of the composite
  \[ \fib{p}{b} \xto{f_b} \fib{p}{\fbar(b)} \xto{h_\gamma} \fib{p}{b} \]
  (which we have called $k_b$).
  Thus, it will suffice to show that for every such $\gamma$, the map induced by $\tr(h)$ on homology sends the corresponding generator to this element $(i_b)_*(R(k_b))$.
  Equivalently, we want to know that the composite
  \[ S^n \xto{\id\sm \gamma_*} S^n \sm \sh{B_{\overline{f}}} \xto{\tr(h)}  S^n \sm \sh{E_f} \]
  picks out $(i_b)_*(R(k_b))$ in homology.
  But by \autoref{thm:tr-omega}, $\id \sm \gamma_*$ is the trace of $\omega_\gamma$, so by \autoref{thm:compose-traces2}, the above composite is homotopic to the trace of
  \[ {}_bB \odot B_p \xto{\omega_\gamma\odot \id} {}_bB \odot B_{\fbar} \odot B_p \xto{{}_bB \odot h} {}_bB \odot B_p \odot E_f
  \]
  But by \autoref{thm:fibcmpid2}, this is homotopic to the trace of
  \[ \xymatrix{
    \widecheck{S_{\fib{p}{b}}}\odot {}_{i_b} E \ar[r]^-{\widetilde{k_b} \odot \id} &
    \widecheck{S_{\fib{p}{b}}} \odot (\fib{p}{b})_{k_b} \odot {}_{i_b} E \ar[r]^-{\id \odot \ell} &
    \widecheck{S_{\fib{p}{b}}} \odot {}_{i_b} E  \odot E_f }
  \]
  and by \autoref{thm:compose-traces2} again, this is homotopic to the composite
  \[ S^n \xto{\tr(\widetilde{k_b})} \sh{(\fib{p}{b})_{k_b}} \xto{\tr (\ell)} \sh{E_f}. \]
  But by \autoref{thm:reidemeister_id}, ${\tr(\widetilde{k_b})}$ picks out $R(k_b)$ in homology, while \autoref{thm:inclusion} tells us that $\tr (\ell)$ acts by $(i_b)_*$ on homology.
  Thus, we obtain exactly $(i_b)_*(R(k_b))$, as desired.
\end{proof}

Finally, we can put everything together to prove \autoref{thm:reidemeister-intro}.

\begin{thm}\label{thm:reidemeister}
  Let $p\colon E\to B$ be a fibration such that $S_B$ is Costenoble-Waner dualizable, as is $S_{\fib{p}{b}}$ for each $b\in B$ (such as if $B$ and all fibers are closed smooth manifolds).
  Then the composite
  \begin{equation*}
    \bbZ \xto{R(\fbar)} \bbZ\sh{\pi_1 B_{\fbar}} \xto{\widehat{R_B}(f)} \bbZ\sh{\pi_1 E_f}
  \end{equation*}
  sends $1$ to $R(f)$.
\end{thm}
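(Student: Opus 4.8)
The plan is to run the same argument as in the proof of \autoref{thm:lefschetz}, with the Reidemeister refinement replacing the Lefschetz number throughout. First I would identify $\bbZ$ with $H_n(S^n)$, the group $\bbZ\sh{\pi_1 B_{\fbar}}$ with $H_n(S^n \sm \sh{B_{\fbar}})$, and $\bbZ\sh{\pi_1 E_f}$ with $H_n(S^n \sm \sh{E_f})$, using the identifications $\sh{B_{\fbar}} \simeq (\Lambda^{\fbar}B)_+$ and $\sh{E_f} \simeq (\Lambda^f E)_+$ from \S\ref{sec:parametrized-trace}. Under these identifications, \autoref{thm:fibidentify2} replaces $\widehat{R_B}(f)$ by $H_n(\tr(h))$, and \autoref{thm:reidemeister_id} applied to $\fbar$ (legitimate since $S_B$ is Costenoble-Waner dualizable) replaces $R(\fbar)$ by $H_n(\tr(\hbar))$. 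By functoriality of homology it then suffices to show that the topological composite
\[ S^n \xto{\tr(\hbar)} S^n \sm \sh{B_{\fbar}} \xto{\tr(h)} S^n \sm \sh{E_f} \]
is stably homotopic to the trace $\tr(\ftil)$ of the twisted endomorphism $\ftil\colon \widecheck{S_E} \to \widecheck{S_E}\odot E_f$ of the previous section; for \autoref{thm:reidemeister_id} applied to $f$ (legitimate since $S_E$ is Costenoble-Waner dualizable by \autoref{thm:total-cwdual}) says that $H_n(\tr(\ftil))$ picks out $R(f)$.

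To establish this stable homotopy I would invoke \autoref{thm:compose-traces2}. Take $M = \widecheck{S_B}$, viewed over $\star\times B$, and $N = B_p$, viewed over $B\times E$; the former is $n$-dualizable by hypothesis, the latter by \autoref{thm:Bp-dual} (which applies since each $S_{\fib{p}{b}}$ is Costenoble-Waner dualizable). Take the twisting ex-fibrations to be $Q = U_\star$, $P = B_{\fbar}$, $R = E_f$, with twisted maps $\hbar\colon U_\star\odot \widecheck{S_B} \to \widecheck{S_B}\odot B_{\fbar}$ and $h\colon B_{\fbar}\odot B_p \to B_p \odot E_f$. Then \autoref{thm:compose-traces2} gives
\[ \tr(h)\circ\tr(\hbar) \;\simeq\; \tr\bigl((\id_{\widecheck{S_B}}\odot h)\circ(\hbar\odot\id_{B_p})\bigr), \]
and \autoref{thm:hhcomp2} identifies the composite on the right with $\ftil$ modulo the equivalence \eqref{eq:SEequiv}. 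Hence its trace is $\tr(\ftil)$, the required stable homotopy holds, and chasing $1$ through $H_n$ yields $\widehat{R_B}(f)(R(\fbar)) = R(f)$, which is exactly the claim.

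Since \autoref{thm:hhcomp2}, \autoref{thm:fibcmpid2}, \autoref{thm:inclusion} and \autoref{thm:fibidentify2} have already absorbed all the concrete content, this final argument is pure bookkeeping. The one point I expect to require genuine care — and hence the ``hard part'' of the write-up — is arranging the variances and twisting data so that the hypotheses of \autoref{thm:compose-traces2} are literally met: one must verify that, with $M=\widecheck{S_B}$ treated as a $(\star,B)$-bimodule and $N=B_p$ as a $(B,E)$-bimodule, the maps $\hbar$ and $h$ really have the shapes $Q\odot M\to M\odot P$ and $P\odot N\to N\odot R$ demanded by the theorem, so that the resulting composite trace is a map $\sh{Q}=S^0\to\sh{R}=\sh{E_f}$ and not one living over some other base. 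This is exactly parallel to the corresponding check in the proof of \autoref{thm:lefschetz}, the only new ingredients being the dualizability of $B_p$ from \autoref{thm:Bp-dual} and the equivalence \eqref{eq:SEequiv}; I do not anticipate any real obstruction beyond it.
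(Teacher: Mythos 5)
Your proposal is correct and takes essentially the same route as the paper's proof: identify $\widehat{R_B}(f)$ with $H_n(\tr(h))$ via \autoref{thm:fibidentify2} and $R(\fbar)$ with $H_n(\tr(\hbar))$ via \autoref{thm:reidemeister_id}, then reduce by functoriality of homology to showing $\tr(h)\circ\tr(\hbar)\simeq\tr(\ftil)$, which follows from \autoref{thm:compose-traces2} combined with \autoref{thm:hhcomp2}. Your explicit bookkeeping of the bimodule shapes ($M=\widecheck{S_B}$ over $\star\times B$, $N=B_p$ over $B\times E$, twists $U_\star$, $B_{\fbar}$, $E_f$) is precisely what the paper leaves implicit, and you correctly invoke \autoref{thm:hhcomp2} where the paper's proof, apparently by a slip, cites \autoref{thm:hhcomp}.
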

\begin{proof}
  We identify $\bbZ$ with $H_n(S^n)$, $\bbZ\;\sh{\pi_1 B_{\fbar}}$ with $H_n(S^n \sm \Lambda^{\fbar} B_+)$, and $\bbZ\;\sh{\pi_1 E_{f}}$ with $H_n(S^n \sm \Lambda^{f} E_+)$.
  By \autoref{thm:fibidentify2}, we can then identify $\widehat{R_B}(f)$ with $H_n(\tr(h))$, and by \autoref{thm:reidemeister_id} we can identify $R(\fbar)$ with $H_n(\tr(\hbar))$.
  Thus, by the functoriality of homology, it suffices to prove that the composite
  \[ S^n \xto{\tr(\hbar)} S^n \sm \Lambda^{\fbar} B_+ \xto{\tr(h)} S^n \sm \Lambda^f E_+ \]
  is homotopic to $\tr(\ftil)$.
  But this follows from \autoref{thm:compose-traces2} and \autoref{thm:hhcomp}.
\end{proof}

\section{Conclusions and future work}
\label{sec:conclusions}

As mentioned previously, while we have proven most theorems explicitly to improve the readability of this paper, most of them can be formulated and proven in the purely abstract context of~\cite{PS2,PS3, shulman:frbi}.
The only real topological input comes from \autoref{thm:mfd-cwdual}, which tells us what sort of objects the general theory of duality can be applied to, and \autoref{thm:reidemeister_id}, which identifies the general notion of trace with a more familiar numerical invariant.
Thus, analogous multiplicativity formulas will hold in any context where we can obtain theorems analogous to these two.
This includes, for instance, parametrized and equivariant homotopy theories.

Even discounting this potential for generalizations, however, we feel that the abstract point of view on multiplicativity presented here is likely to be valuable more widely.
As we saw in the introduction, it unifies the classical formula for the Lefschetz number of an orientable fibration with the various results in the literature on Nielsen numbers.
It also shows that orientability, far from being an essential ingredient, is just a condition which allows the general formula to be expressed in a simpler form.
(This is also of interest when considering generalizations to other contexts, where the relevant notion of ``orientability'' may be less obvious, but the abstract framework works just fine.)

\bibliographystyle{plain.bst}
\bibliography{traces_2}

\end{document}